\DeclareMathOperator{\Div}{div}
\DeclareMathOperator{\Hess}{Hess}
\renewcommand{\epsilon}{\varepsilon}
\newcommand{\boF}{\mathcal{F}}
\newcommand{\boC}{\mathcal{C}}
\newcommand{\boH}{\mathcal{H}}
\newcommand{\boA}{\mathcal{A}}
\newcommand{\boO}{\mathcal{O}}
\newcommand{\boL}{\mathcal{L}}
\newcommand{\boS}{\mathcal{S}}
\newcommand{\boU}{\mathcal{U}}
\newcommand{\boZ}{\mathcal{Z}}
\newcommand{\LL}{\mathbf{L}}
\newcommand{\MM}{\mathbf{M}}
\newcommand{\bfd}{\mathbf{d}}
\newcommand{\bff}{\mathbf{f}}
\newcommand{\R}{\mathbb{R}}
\newcommand{\Z}{\mathbb{Z}}
\renewcommand{\H}{\mathbb{H}}
\renewcommand{\S}{\mathbb{S}}
\newcommand{\N}{\mathbb{N}}
\newcommand{\T}{\mathbb{T}}
\newcommand{\la}{\langle}
\newcommand{\ra}{\rangle}
\newcommand{\eps}{\varepsilon}
\newcommand{\id}{\text{id}}
\newcommand{\Ome}{\Omega}
\newtheorem{thm}{Theorem}
\newtheorem{defn}[thm]{Definition}
\newtheorem{cor}[thm]{Corollary}
\newtheorem{prop}[thm]{Proposition}
\newtheorem{lem}[thm]{Lemma}
\newcommand{\barre}[1]{\overline{#1}}
\renewcommand{\phi}{\varphi}
\newtheorem*{thm*}{Theorem}
\newtheorem{claim}{Claim}
\newtheorem*{claim*}{Claim}
\theoremstyle{remark}
\newtheorem{remarq}{Remark}
\newtheorem*{rem*}{Remark}
\newcounter{remark}
\newcounter{case}
\newcounter{construction}
\newcounter{fact}
\newcounter{step}
\newcommand{\s}{\mathfrak s}
\title{Minimal surfaces near short geodesics in hyperbolic $3$-manifolds}
\author{Laurent Mazet}
\address{Universit\'e Paris-Est, LAMA (UMR 8050), UPEC, UPEM, CNRS, 61, avenue
du G\'en\'eral de Gaulle, F-94010 Cr\'eteil cedex, France}
\email{laurent.mazet@math.cnrs.fr}
\author{Harold Rosenberg} 
\address{Instituto Nacional de Matematica Pura e Aplicada (IMPA) Estrada Dona
Castorina 110, 22460-320, Rio de Janeiro-RJ, Brazil}
\email{rosen@impa.br}
\begin{document}

\maketitle

\begin{abstract}
If $M$ is a finite volume complete hyperbolic $3$-manifold, the quantity $\boA_1(M)$ is
defined as the infimum of the areas of closed minimal surfaces in $M$. In this paper we
study the continuity property of the functional $\boA_1$ with respect to the geometric
convergence of hyperbolic manifolds. We prove that it is lower semi-continuous and even
continuous if $\boA_1(M)$ is realized by a minimal surface satisfying some hypotheses.
Understanding the interaction between minimal surfaces and short geodesics in $M$ is the
main theme of this paper
\end{abstract}

\section{Introduction}


The area of a closed minimal surface $\Sigma$ in a complete hyperbolic $3$-manifold is
bounded above by $-2\pi \chi(\Sigma)$; this follows from the Gauss equation. Finding an
optimal lower bound for the area is a more subtle question. Notice that in dimension $2$,
there is no lower bound for the length of a closed geodesic in a hyperbolic surface.
However the Margulis lemma and the monotonicity formula does give a lower bound of
$2\pi(\cosh(\bar\eps)-1)$, for the area of a properly immersed minimal surface in a
complete hyperbolic $3$-manifold; $\bar\eps$ is the Margulis constant. According to
explicit estimates of $\bar \eps$, this number is at least 0.104 \cite{Mey}.

In a previous paper the authors proved the area is at least $2\pi$ when $\Sigma$ is a
closed embedded minimal surface in a complete finite volume hyperbolic $3$-manifold of
Heegaard genus at least $6$. If $\Sigma$ is non-orientable the lower area bound is $\pi$.
Perhaps the main goal of the present paper it to introduce techniques to resolve the
remaining cases: $2\le \mathrm{Heegaard\ genus}\le 5$.


In our paper~\cite{MaRo2}, we introduce the quantity $\boA_1(M)$, where $M$ is a compact orientable $3$-manifold. If $\boO$ denotes
the collection of all smooth
orientable embedded closed minimal surfaces in $M$ and $\boU$ the collection of all
smooth non-orientable ones, $\boA_1(M)$ is defined by
$$
\boA_1(M)=\inf(\{|\Sigma|, \Sigma\in \boO\}\cup \{2|\Sigma|, \Sigma\in \boU\})
$$
so $\boA_1(M)$ gives a lower bound for the area of any minimal surface in $M$.

The main result in~\cite{MaRo2} says that $\boA_1(M)$ is the area (or twice the area) of
some minimal surface in $M$. Moreover it gives some characterization of this minimal
surface in terms of its index and its genus. 

Let $(g_i)_i$ be a sequence of smooth Riemannian metrics on $M$ which smoothly converge to
$\bar g$. Because of the characterization of the minimal surface that realizes
$\boA_1(M,g_i)$ and thanks to a compactness result by Sharp~\cite{Shar}, it can be proved
that $\liminf \boA_1(M,g_i)\ge \boA_1(M,\bar g)$.
Moreover, if $\boA_1(M,\bar g)$ is realized by a non degenerate minimal surface,
$\lim\boA_1(M,g_i)=\boA_1(M,\bar g)$. However one can produce examples where $\boA_1$ is
not upper semi-continuous (F. Morgan suggested examples of a $2$-sphere looking like a pear).

Concerning hyperbolic manifolds, our study proves that, if $M$ is
hyperbolic and its Heegaard genus is at least $6$, then $\boA_1(M)\ge 2\pi$ which
gives a universal lower bound for the area of a minimal surface in $M$. This reasoning can
be adapted to the case $M$ is a finite volume hyperbolic manifold (not necessarily compact).



In order to remove the hypothesis about the Heegaard genus, we ask the question of the
continuity of $\boA_1$ when the space of hyperbolic manifolds is endowed with the
geometric convergence topology. Here the situation is not as above where we have a
sequence of Riemannian metrics on a fixed manifold, here we have a sequence of manifolds $M_i$
with changing topologies.
Moreover, if $(M_i)_i$ is a non trivial converging sequence of
hyperbolic manifolds then $M_i$ contains a geodesic $\gamma_i$ whose length goes to $0$.
As a consequence, an important question for our study is to understand the behaviour of a minimal
surface intersecting a neighborhood of a short geodesic.

This question has been already studied by several authors. For example, Hass~\cite{Has2}
and Huang and Wang~\cite{HuWa} study the geometry of minimal surfaces near a short
geodesic in order to construct hyperbolic manifolds that fiber over the circle but such
that the fibers can not be made minimal.

Our study of minimal surfaces near short geodesics starts with a result of
Meyerhoff~\cite{Mey}. Basically it says that a short geodesic in $M$ of length $\ell$ has
a embedded tubular neighborhood $N_{R_\ell}$ of radius $R_\ell$ and $\lim_{\ell\to
0}R_\ell=+\infty$. 

We obtain two results concerning minimal surfaces in $N_{R_\ell}$. The first
one deals with stable minimal surfaces in tubular neighborhood of short geodesics
(Corollary~\ref{cor:estimarea1}).
Basically it says that such a stable minimal surface either stays far from the short geodesic
or it intersects transversely the short geodesic. Moreover in the second case, the surface
must have a very large area in the $R_\ell$ tubular neighborhood of the geodesic. 

Our second result deals with general minimal surfaces (not assumed to be 
stable)
(Proposition~\ref{prop:estimarea}). It says
that a minimal surface in the neighborhood of a short geodesic either stays very far from
the core geodesic or comes very close to it (the estimate depending on the index of the 
minimal surface). As above in the second case, we obtain a
lower bound for the area of a minimal surface coming close to the short geodesic.

Actually these two results are very similar to results we obtained with Collin and Hauswirth
in \cite{CoHaMaRo} concerning the geometry of minimal surfaces in hyperbolic cusps. In
both cases, the argument is based on the fact that the tubular neighborhoods are foliated
by equidistant tori whose diameter are small. As a consequence, an embedded minimal surface
with bounded curvature can not be tangent to these equidistant surfaces.

Once the behaviour of minimal surfaces close to short geodesics is understood, we study the
continuity of $\boA_1$. A version of our result can be stated as follows. It is similar to
the result that can be obtained for a fixed manifold with a converging sequence of
metrics.

\begin{thm*}
Let $M_i\to \barre M$ be a converging sequence of hyperbolic cusp manifolds. Then
$$
\boA_1(\barre M)\le \liminf \boA_1(M_i).
$$
If $\boA_1(\barre M)$ is not realized by the area of a stable-unstable separating minimal
surface, then
$$
\boA_1(\barre M)=\lim \boA_1(M_i).
$$
\end{thm*}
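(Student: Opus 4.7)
I would prove the two inequalities separately, following the scheme that works in the fixed-metric case $(g_i)\to \bar g$ on a fixed manifold, but replacing smooth convergence by geometric convergence and using the results on minimal surfaces near short geodesics (Corollary~\ref{cor:estimarea1} and Proposition~\ref{prop:estimarea}) as the main device for handling the thin parts that appear as the topology changes.

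\textbf{Lower semi-continuity.} After extraction, assume $\lim \boA_1(M_i)=A<+\infty$, and let $\Sigma_i\subset M_i$ be a minimal surface realizing $\boA_1(M_i)$ (up to a factor $2$ in the non-orientable case), as granted by the main result of \cite{MaRo2}; then $|\Sigma_i|\le A+o(1)$ and the index of $\Sigma_i$ is uniformly bounded. The key step is to confine $\Sigma_i$ to the thick part. By Meyerhoff, every short geodesic $\gamma$ of $M_i$ of length $\ell$ has an embedded tube of radius $R_\ell\to +\infty$; Proposition~\ref{prop:estimarea} (and Corollary~\ref{cor:estimarea1} in the stable case) force $\Sigma_i$ either to stay outside some fixed-radius sub-tube or to have area growing with $R_\ell$. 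Since $|\Sigma_i|$ is bounded, $\Sigma_i$ must sit in the complement of such tubes. The geometric convergence supplies, for each compact $K_j\subset \barre M$, almost-isometric embeddings $\phi_i^j:K_j\to M_i$ whose image contains this complement, so $(\phi_i^j)^{-1}(\Sigma_i)\subset K_j$ is a minimal surface for a metric close to the hyperbolic one. Sharp's compactness theorem~\cite{Shar} then yields, after a further subsequence, a limit minimal surface $\bar\Sigma\subset \barre M$; the Margulis/monotonicity bound $|\Sigma_i|\ge 2\pi(\cosh(\bar\eps)-1)>0$ together with the standard monotonicity in the cusps of $\barre M$ prevents the limit from being empty or escaping to a cusp. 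This gives $\boA_1(\barre M)\le |\bar\Sigma|\le A$.

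\textbf{Reverse inequality.} Let $\bar\Sigma$ realize $\boA_1(\barre M)$. The hypothesis rules out the degenerate situation in which $\bar\Sigma$ carries a Jacobi field whose two sides behave differently with respect to the area functional, so that a perturbation of the metric could move the minimizer to one side and the min-max critical surface to the other. In the remaining cases one of two standard routes applies: if $\bar\Sigma$ is nondegenerate (strictly stable or strictly unstable transverse to all deformations), a tubular neighborhood of $\bar\Sigma$ in $\barre M$ is transplanted into $M_i$ via the geometric convergence, and an elliptic implicit function argument produces a nearby minimal surface $\Sigma_i\subset M_i$ with $|\Sigma_i|=|\bar\Sigma|+o(1)$; if $\bar\Sigma$ has positive index, it is obtained on $\barre M$ via a min-max procedure on a sweep-out supported in a tubular neighborhood, and reproducing the same sweep-out in $M_i$ through $\phi_i$ yields a min-max critical point of area at most $|\bar\Sigma|+o(1)$. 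In either case $\boA_1(M_i)\le |\bar\Sigma|+o(1)$, which combined with the lower semi-continuity gives equality.

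\textbf{Main obstacle.} The delicate point lies entirely in the upper bound when $\bar\Sigma$ is degenerate, and in justifying that "stable-unstable separating" is precisely the class of surfaces that cannot be tracked under a geometric perturbation. A one-sided degeneracy (stable-stable or unstable-unstable) still allows one to reproduce $\bar\Sigma$ up to an $o(1)$ error by either a minimization or a min-max scheme on $M_i$, but a genuinely separating Jacobi field can cause the nearby critical surface in $M_i$ to bifurcate into two different surfaces whose areas straddle $|\bar\Sigma|$, in which case only the strictly smaller one contributes to $\boA_1(M_i)$ and upper semi-continuity fails. Making the non-separating dichotomy rigorous, presumably via a bifurcation analysis of the Jacobi operator under the geometric convergence, is the technical heart of the proof.
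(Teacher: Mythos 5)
Your plan has the right global shape — confine $\Sigma_i$ to the thick part, transport by the quasi-isometries $\phi_i$, apply Sharp's compactness — and your discussion of the upper bound (nondegenerate case via implicit function theorem, min-max case via transported sweep-outs) lines up with the paper's Proposition~\ref{prop:upsemi}. But there is a genuine gap in the lower semi-continuity step, precisely at the point where you exclude $\Sigma_i$ from the tubes.

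You assert that Proposition~\ref{prop:estimarea} forces $\Sigma_i$ either to avoid the tubes or to have ``area growing with $R_\ell$.'' That is true only for the stable case, where Corollary~\ref{cor:estimarea1} provides the bound $2\pi(\cosh R -1)$ for $R$ up to $R_\ell-\barre R$, and this diverges with $R_\ell$. For the index-one (separating) case, Proposition~\ref{prop:estimarea} only gives $|\Sigma\cap N_R|\ge \kappa\,\s_0\,e^{R-R_\ell}$; even at the maximal $R=R_\ell-\barre R$ this is just the constant $\kappa\s_0 e^{-\barre R}$, and for a fixed radius it actually goes to zero as $R_\ell\to\infty$. So an area comparison alone can never exclude index-one $\Sigma_i$ from entering a tube. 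The paper closes this gap with a much more delicate global argument (Proposition~\ref{prop:outside}): it uses $|\Sigma\cap N_R|\ge 4|S_R|$ together with a cut-and-paste construction of an explicit competitor sweep-out of $D(M_i)$ whose width is strictly less than $|\Sigma_i|$, and derives a contradiction from $\Sigma_i$ being a width-realizer. This argument exploits the variational characterization of $\Sigma_i$ (it realizes $\boA_1(M_i)=W_{D(M_i)}$), not just its index, and requires the two-sided perturbation of $\Sigma$ and the Plateau solutions $\Sigma_1,\Sigma_2$ inside the tube; it is the technical core of the lower semi-continuity proof and cannot be replaced by the monotonicity-type estimate you invoke. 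Also note a minor omission: the paper's upper bound argument treats separately the stable non-separating case (minimize in the nontrivial homology class), which covers in particular non-orientable realizers; your sketch subsumes everything under ``nondegenerate or positive index'' and so misses that a degenerate but non-separating stable surface still yields upper semi-continuity without any nondegeneracy.
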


Let us recall that "stable-unstable" means that the first eigenvalue of the stability
operator is $0$. Of course one can expect that the surface that realizes $\boA_1(\barre
M)$ is never stable-unstable but we do not know how to prove this. Actually it is possible
to expect that no minimal surface in a hyperbolic manifold is stable-unstable. In fact
the above result is a combination of two
propositions: Propositions~\ref{prop:upsemi} and \ref{prop:losemi}

The main difficulty in the proof of Proposition~\ref{prop:losemi} is to be able to control
where is located a minimal surface $\Sigma_i$ that realizes
$\boA_1(M_i)$. Actually, our study of minimal surfaces near short geodesics implies that
$\Sigma_i$ can not enter into a tubular neighborhood of a short geodesic. So it stays in a
part of $M_i$ where the convergence $M_i\to \barre M$ is just the smooth convergence of
the metric tensor. Thus a compactness result by Sharp~\cite{Shar} gives the lower
semicontinuity of $\boA_1$. Concerning Proposition~\ref{prop:upsemi}, we first prove that
$\limsup \boA_1(M_i)$ is bounded. Thus if $\boA_1(\barre M)$ is not realized by a
stable-unstable separating minimal surface $\Sigma$ then $\Sigma$ can be deformed into a
minimal surface in $M_i$. This implies the second inequality.

Of course one can also think about hyperbolic manifolds with infinite volume and ask the following question.
For which class of complete hyperbolic 3-manifolds of infinite volume can one hope for an
area lower bound $2\pi$? There may not exist a closed minimal surface in $M$, but if $\boA_1(M)$ is
realized, can one expect it to be at least $2\pi$?

\medskip

The paper is organized as follows. In Section~\ref{sec:cutuend}, we recall some basic facts
about the description of cusp and tubular ends of complete finite volume hyperbolic
$3$-manifolds. Section~\ref{sec:trans} studies the geometry of minimal surfaces with bounded
curvature in tubular ends. In Section~\ref{sec:maxprinciples}, we study the general
behaviour of minimal surfaces in tubular ends. In Section~\ref{sec:minmax2} we recall some
facts about the min-max theory for minimal surfaces that we will use in the next sections.
Section~\ref{sec:minmax} is devoted to recall the work we made in~\cite{MaRo2} and how it
should be adapted to work with non compact hyperbolic manifolds. Sections~\ref{sec:upsemi}
and \ref{sec:lowsemi} are devoted to the study of the lower and upper semi-continuity of
the $\boA_1$ functional. Finally in Appendix~\ref{sec:appen}, we prove some technical
results and formulas.

\subsection*{Preliminary remarks}
Let $S$ be a smooth Riemannian surface, we will denote by $|S|$ its area.

Let $(T,d\sigma^2)$ be a flat torus. Its universal cover is a flat $\R^2$ so we have
coordinates $(x_1,x_2)$ such that the flat metric can be written $dx_1^2+dx_2^2$. Then $T$
is the quotient of $\R^2$ by some lattice $\Gamma$. We say that $(x_1,x_2)$ is an orthonormal
coordinate system on $T$. 

Moreover, we can choose $(x_1,x_2)$ such that $\Gamma$ is generated by $v_1,v_2$ where
$v_1=(a_1,0)$ and $v_2=(a_2,b_2)$. We then say that $(x_1,x_2)$ is a well oriented
orthonormal coordinate system.

We notice that if $(T,d\sigma^2)$ has diameter $\delta$ then the lattice can be generated
by vectors of length less than $2\delta$.

\section{Hyperbolic manifolds}

In this first section we recall some facts concerning the geometry of hyperbolic
$3$-manifolds with finite volume also called cusp manifolds. We refer to \cite{BePe} for
part of this description.

\subsection{The cusp and tubular ends}\label{sec:cutuend}

Let $M$ be a complete hyperbolic $3$-manifold of finite volume. For any
$\eps$ less than the Margulis constant, the manifold $M$ can be split into two parts: the
$\eps$-thick part $M_{[\eps,\infty)}$ which is connected, not empty (recall that $p\in M_{[\eps,\infty)}$ is any non null homotopic closed loop at $p$ has length at least $\eps$) and the $\eps$-thin
part which may have a finite number of
connected components. The connected components of the thin part are of two types: cusp
ends and tubular neighborhoods of closed geodesics also called tubular ends.

Cusp ends are isometric to $E_0=T\times \R_+$ endowed with a metric 
$$
g=e^{-2t}d\sigma^2+dt^2
$$
where $d\sigma^2$ is a flat metric on the $2$-torus $T$. We define $E_t=T\times[t,+\infty)$.
We notice that if $E_0$ is a component of the $\eps$-thin part then $E_t$ is a component
of the the $\delta\eps$-thin part with $e^{-2t}\le \delta\le e^{-t/2}$.


For tubular ends, let $\gamma$ be a short geodesic in $M$ and consider $c$ a
lift of $\gamma$ to $\H^3$. If $R$
is small the $R$-tubular neighborhood $N_R$ of $\gamma$ in $M$ is the quotient of the $R$
tubular neighborhood $V_R$ of $c$ in $\H^3$ by some loxodromic transformation $\tau$ of
axis $c$ (see Figure~\ref{fig:fig5}).

In order to introduce some coordinate system, let $z$ denote arclength along $c$ and let
$\vec \nu(z), \vec \tau(z)$ be parallel orthogonal unit normal vectorfields along
$\gamma$, we introduce cylindrical coordinates in $V_R$ by 
$$
F(z,\theta,r)=\exp_{c(z)}(r(\cos\theta\vec\nu(z)+\sin\theta\vec\tau(z)))
$$
In these coordinates, the hyperbolic metric is 
\begin{equation}\label{eq:metric}
g=(\cosh^2 r) dz^2+(\sinh^2r) d\theta^2+dr^2.
\end{equation}
$N_R$ can be viewed as the quotient of $M_R=\{(t,\theta,r)\in \R^2\times [0,R]\}$ by the
relations $(t,\theta,0)\sim(t,\theta',0)$, $(t,\theta,r)\sim(t,\theta+2\pi,r)$ and
$(t,\theta,r)\sim (t+\ell,\theta+\alpha,r)$ for some parameters $\ell>0$ and $\alpha$. $\ell$ is
the length of the geodesic loop $\gamma$ and $\alpha$ is called the twist
parameter of $\gamma$ (it
is the angle of the loxodromic transformation). As above, if $N_R$ is a component of
the $\eps$-thin part, then $N_r$ is a component of the $\delta\eps$-thin part
some some $\delta\in[e^{2(r-R)},e^{(r-R)/2}]$ if $R$ and $r$ are larger than some
universal constant. In the following, we denote by $S_{\bar r}=\partial N_{\bar r}$ the torus
$\{r=\bar r\}$.

\begin{figure}[h]
\begin{center}
\resizebox{0.6\linewidth}{!}{\input{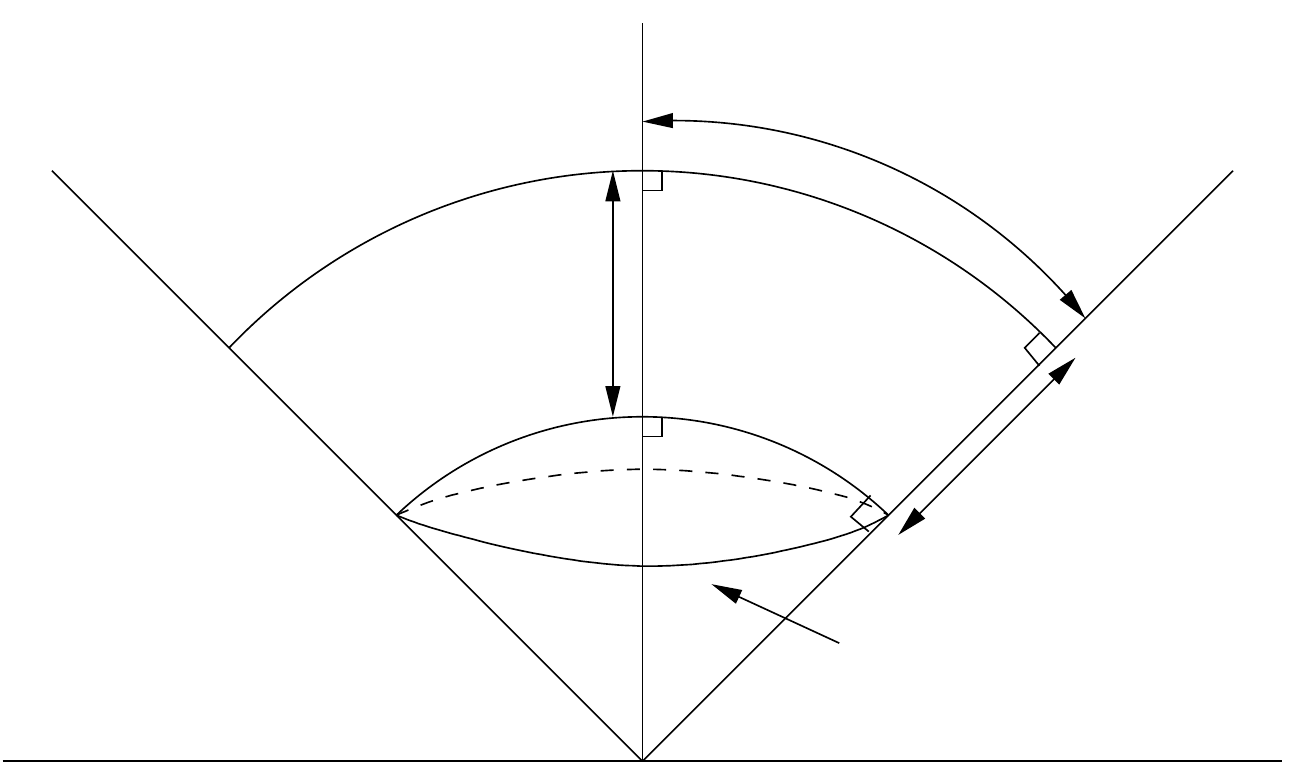_t}}
\caption{The $r$-tubular neighborhood $V_r$ of a lift $c$ of the geodesic $\gamma$ of
length $\ell$\label{fig:fig5}}
\end{center}
\end{figure}

The above coordinates are called tubular coordinates. In order to be
coherent with the coordinates we use on cusp ends, we will also use the coordinate system
$(x_1,x_2,t)=(\theta,z,R-r)$ such that the metric can be written 
$$
g=\sinh^2(R-t)dx_1^2+\cosh^2(R-t)dx_2^2+dt^2=d\sigma_t^2+dt^2
$$
on $T\times[0,R]$ where $T$ is the quotient of $\R^2$ by the translations by $(2\pi,0)$
and $(\alpha,\ell)$ (notice that $g$ is singular on $T\times\{R\}$). 

The interest of these coordinates is that any part of a cusp or tubular end can be
described as $T\times[a,b]$ with some metric $d\sigma_t^2+dt^2$ where $d\sigma_t^2$ is a
flat metric on the torus $T$. We denote by $T_t=T\times\{t\}$. So the family $(T_t)_t$
gives a foliation of the ends by tori.

If $C$ is the torus in such an end that corresponds to $T\times\{\bar t\}$, the graph of
a function $u:\Ome\subset C\to \R$ is just the surface parametrized by $\{(p,t)\in
T\times \R|t=\bar t+u(p)\}$ (notice that we will often identify $C\subset M$ with $T_{\bar t}\in
T\times\R$).

One question is to know what is the maximal radius $R$ that can be considered
in the above discussion ($N_R$ being embedded). This has been estimated by Meyerhoff in
\cite{Mey} where the following result is proved.

\begin{thm}
Let $\gamma$ be a geodesic loop in a complete hyperbolic $3$-manifold. If the length $\ell$
of $\gamma$ is less than $\frac{\sqrt 3}{4\pi}\ln^2(\sqrt 2+1)$, then there exists an
embedded tubular neighborhood around $\gamma$ whose radius $R$ satisfies
$$
\sinh^2R=\frac12\left(\frac{\sqrt{1-2k}}k-1\right)
$$
where $k=\cosh\sqrt{\frac{4\pi \ell}{\sqrt 3}}-1$.
\end{thm}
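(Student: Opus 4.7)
The plan is to translate the embeddedness of the tube into a statement about the separation in $\H^3$ between distinct lifts of $\gamma$. Lift $\gamma$ to a complete geodesic $c\subset\H^3$ and let $\tau$ be the loxodromic element generating the stabilizer of $c$ in the deck group $\Gamma$; its complex translation length is $\ell+i\alpha$. Then $N_R$ is embedded in $M$ if and only if $g(V_R)\cap V_R=\emptyset$ for every $g\in\Gamma\setminus\langle\tau\rangle$, which, since $V_R$ is the hyperbolic $R$-neighborhood of $c$, is equivalent to $d(c,g(c))\ge 2R$. So the task reduces to producing an explicit lower bound on $d(c,g(c))$ in terms of $\ell$.

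To get a bound I would fix $g\in\Gamma\setminus\langle\tau\rangle$ and study the pair of non-commuting loxodromic elements $\tau$ and $\tau'=g\tau g^{-1}$, whose axes are $c$ and $g(c)$. Writing them in $\mathrm{PSL}(2,\C)$, the discreteness of $\Gamma$ forces a quantitative Jørgensen-type inequality relating the trace of $[\tau,\tau']$ to the traces of $\tau$ and $\tau'$, and I would rewrite this as a constraint of the shape $|\sinh((\ell+i\alpha)/2)|\cdot|\sinh d_c|\ge$ explicit, where $d_c$ is the complex distance between the axes. This already yields a first (non-sharp) bound on $d(c,g(c))$ depending on $\ell$, and the same inequality will need to be applied to every pair of conjugates of $\tau$ by elements of $\Gamma$.

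The sharp constant in Meyerhoff's formula comes from a planar packing argument on the boundary torus $S_R=\partial N_R$. Fix a fundamental annulus $A\subset S_R$ for the action of $\langle\tau\rangle$; in the flat metric inherited from $V_R$ it has area $\ell\cdot 2\pi\sinh R$. To each nontrivial double coset of $\langle\tau\rangle$ in $\Gamma$ attach a disk in $A$, centered at the closest-point projection of the corresponding translate $g(c)$ onto $S_R$ and with radius determined by the Jørgensen bound of the previous paragraph. The pairwise disjointness of these disks follows from applying the Jørgensen inequality to every pair $(g_i\tau g_i^{-1},g_j\tau g_j^{-1})$, and their total area is bounded above by the optimal planar disk-packing density $\pi/(2\sqrt 3)$ times $|A|$. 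Comparing areas gives an inequality $k\ge \varphi(R)$ with $\varphi$ an explicit decreasing function of $R$, and $\sqrt 3$ enters exactly as the hexagonal packing constant. Inverting this inequality produces the stated formula for $\sinh^2R$, which is well defined precisely while $k<1/2$, i.e. $\ell<\frac{\sqrt 3}{4\pi}\ln^2(\sqrt 2+1)$.

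The main obstacle in this plan is the packing step: one must both choose the disk radii in a coherent way for all conjugates (not only for the single conjugate $\tau'=g\tau g^{-1}$ isolated at first) and verify that the resulting family is pairwise disjoint, so that the optimal planar bound applies with the sharp constant $\pi/(2\sqrt 3)$. This is where the hexagonal packing appears, and extracting the precise constant $\sqrt 3$ in Meyerhoff's statement rather than a weaker numerical constant requires following exactly his organization of the argument.
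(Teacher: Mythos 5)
The paper itself offers no proof here — it simply cites Meyerhoff \cite{Mey} — so the comparison must be against Meyerhoff's original argument, and there your plan diverges at the crucial point.

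Your first paragraph is right: embeddedness of $N_R$ is equivalent to $d(c,g(c))\ge 2R$ for all $g\in\Gamma\setminus\langle\tau\rangle$, and the tool that turns this into a trace inequality is Jørgensen's inequality together with the trace identity relating $\tr[\tau,g]-2$ to $\tr^2\tau-4$ and the complex distance between the axes. But as soon as you write ``$|\sinh((\ell+i\alpha)/2)|\cdot|\sinh d_c|\ge$ explicit'' you have introduced a quantity that depends on the twist $\alpha$, and $\alpha$ is \emph{not} controlled by $\ell$. Jørgensen applied directly to $\tau$ cannot give a bound on $d_c$ in terms of $\ell$ alone: if $\alpha$ is close to $\pi$, $|\tr^2\tau-4|=2|\cosh(\ell+i\alpha)-1|$ is close to $4$, and the inequality gives no information. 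This is the actual obstacle, and it is not the disk-packing step you flag.

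Meyerhoff resolves it, and this is where $\sqrt 3$ enters, by a lattice shortest-vector argument rather than by packing disks on $S_R$. Consider the lattice $\Lambda=\Z(\ell+i\alpha)+\Z(2\pi i)\subset\C$; its covolume is $2\pi\ell$, independently of $\alpha$. By the planar Hermite constant $\gamma_2=2/\sqrt 3$ (equivalently, the optimality of the hexagonal lattice), $\Lambda$ contains a nonzero vector $\lambda=n(\ell+i\alpha)+2\pi i m$ with $|\lambda|\le\sqrt{4\pi\ell/\sqrt 3}$, and necessarily $n\neq 0$. Replacing $\tau$ by $\tau^n$, one has $\tr^2(\tau^n)-4=2(\cosh\lambda-1)$, hence $|\tr^2(\tau^n)-4|\le 2\bigl(\cosh\sqrt{4\pi\ell/\sqrt 3}-1\bigr)=2k$ with $k$ exactly as in the statement, and now Jørgensen plus the axis-distance trace identity for the pair $(\tau^n,g)$ yields a lower bound on $\cosh d$ in terms of $k$ alone, which after algebra is $\sinh^2R=\frac12\bigl(\frac{\sqrt{1-2k}}{k}-1\bigr)$; the threshold $\ell<\frac{\sqrt 3}{4\pi}\ln^2(\sqrt 2+1)$ is precisely $k<\sqrt 2-1$, where the right-hand side stays positive. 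So the hexagonal/$\sqrt 3$ constant lives in $\C$ (choosing the power $n$), not on the torus $S_R$; your proposed packing on $S_R$ also has its bookkeeping off — the flat area of the fundamental annulus is $2\pi\ell\cosh R\sinh R$, not $2\pi\ell\sinh R$ — but the more serious issue is that without the passage to $\tau^n$ your Jørgensen step cannot be made to depend only on $\ell$.
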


In the sequel we denote by $R_\ell$ the solution of
$\sinh^2R=\frac12\left(\frac{\sqrt{1-2k}}k-1\right)$. When $\ell$ is small this implies that
$\sinh^2 R_\ell\sim \cosh^2 R_\ell\sim \frac{\sqrt 3}{4\pi \ell}$. For example, the area of
$S_{R_\ell}$ goes to $\frac{\sqrt 3}2$ as $\ell\to 0$.

Let us notice that the mean curvature of the torus $S_{r_0}$ with respect to $-\partial_r$
is $(\tanh r_0+\coth r_0)/2$.


\subsection{The geometric convergence}\label{sec:conv}

The space of cusp manifolds with volume less than $V_0$ is compact for geometric
convergence. This convergence is defined as follows (see Sections~E.1 and E.2 in
\cite{BePe}). Let $\Pi_i:\H^3\to M_i$ and $\barre \Pi: \H^3\to \barre M$ be the universal
covers and $o$ a point in $\H^3$. We say that the pointed manifolds $(M_i,\Pi_i(o))$
converge for the geometric convergence topology to $(\barre M,\barre \Pi(o))$ if, for any $r$,
there are $f_i:B(o,r)\subset \H^3\to \H^3$ which are
equivariant ($\barre \Pi(z)=\barre \Pi(z')\Leftrightarrow \Pi_i(f_i(z))=\Pi_i(f_i(z'))$)
such that $(f_i)_i$ converges to the identity in the $C^\infty$ topology (here $B(o,r)$
denotes the geodesic ball in $\H^3$). Actually defining $\phi_i$ by $\phi_i(\barre
\Pi(z))=\Pi_i(f_i(z))$, we will often use the following consequence (see
Lemma~E.2.2 in \cite{BePe}).

\begin{lem}
Let $(M_i)_i$ be a sequence of finite volume hyperbolic manifolds converging to $\barre M$
in the geometric topology. Let $\eps>0$ be fixed, after eliminating some initial terms,
there exists:
\begin{itemize}
\item $(\sigma_i)_i$ with $\sigma_i>0$ and $\sigma_i\to 0$,
\item $(k_i)_i$ with $k_i>1$ and $k_i\to 1$,
\item for all $i$ a $k_i$-quasi-isometric embedding $\phi_i$ from a neighborhood of
$\barre M_{[\eps,\infty)}$ into $M_i$.
\end{itemize}
with the following properties
\begin{itemize}
\item $\phi_i(\barre M_{[\eps,\infty)})$ is contained in the interior  of ${M_i}_{[\eps-\sigma_i,\infty)}$ and
\item $\phi_i(\partial\barre M_{[\eps,\infty)})$ does not meet an open neighborhood of
${M_i}_{[\eps+\sigma_i,\infty)}$.
\end{itemize}
\end{lem}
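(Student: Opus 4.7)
The plan is to obtain each $\phi_i$ by descending the equivariant lifts $f_i:B(o,r)\to \H^3$ supplied by the definition of geometric convergence, then verify the quasi-isometry estimate and the two thick-thin part inclusions in turn.

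First I would exploit that $\barre M$ has finite volume, so the thick part $\barre M_{[\eps,\infty)}$ is compact. Pick an open neighborhood $\boV$ of $\barre M_{[\eps,\infty)}$ with compact closure, chosen slightly thicker than $\barre M_{[\eps,\infty)}$ so that every essential loop of length at most the Margulis constant $\bar\eps$ based at a point of $\barre M_{[\eps,\infty)}$ is entirely contained in $\boV$. Choose $r>0$ large enough that a connected lift $\wtilde\boV\subset \H^3$ of $\boV$ containing $o$ sits inside $B(o,r)$.

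Next I would apply the definition of geometric convergence at this radius $r$: for $i$ large we get equivariant $C^\infty$-diffeomorphisms $f_i:B(o,r)\to \H^3$ converging to $\id$ in $C^\infty$. Setting $\phi_i(\barre\Pi(z))=\Pi_i(f_i(z))$ for $z\in\wtilde\boV$ is well-defined on $\boV$ by equivariance, and the $C^\infty$ convergence on the compact set $\overline{\wtilde\boV}$ yields $\|df_i-\id\|_\infty\to 0$. Hence there exist $k_i>1$ with $k_i\to 1$ such that $\phi_i$ is a $k_i$-quasi-isometric embedding.

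For the inclusion statements, the argument is a standard length comparison. Given $p\in\barre M_{[\eps,\infty)}$, let $\alpha$ be a shortest essential loop at $\phi_i(p)$; by the Margulis lemma its length is at most $\bar\eps$, so by the choice of $\boV$ and $k_i\to 1$ it lies in $\phi_i(\boV)$ for $i$ large. Then $\phi_i^{-1}(\alpha)$ is an essential loop at $p$, of length at most $k_i\cdot\mathrm{length}(\alpha)$. Since $p$ is in the $\eps$-thick part, $\mathrm{length}(\alpha)\ge \eps/k_i$, so $\phi_i(p)\in{M_i}_{[\eps/k_i,\infty)}$; pick $\sigma_i=\eps(1-1/k_i)+o(1)$ so that compactness of $\barre M_{[\eps,\infty)}$ upgrades this to containment in the interior. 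Conversely, for $p\in\partial\barre M_{[\eps,\infty)}$, there is an essential loop $\beta$ at $p$ of length $\eps$; then $\phi_i(\beta)$ is essential at $\phi_i(p)$ with length at most $k_i\eps<\eps+\sigma_i$, so $\phi_i(p)$ lies in the $(\eps+\sigma_i)$-thin part, giving the second property after enlarging $\sigma_i$ if needed.

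The main delicate point is the back-pull step: $\phi_i^{-1}$ exists only on $\phi_i(\boV)$, and one must ensure that every short essential loop based at a point of $\phi_i(\barre M_{[\eps,\infty)})$ actually lies in this image. This is why I would fix $\boV$ as a bounded collar thickening of $\barre M_{[\eps,\infty)}$ \emph{before} invoking geometric convergence, using the Margulis constant to uniformly bound the length (and hence the diameter) of the relevant loops, and only then choose $r$ and apply the convergence of $f_i$.
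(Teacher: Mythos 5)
The paper does not actually prove this lemma; it cites it to Lemma~E.2.2 of Benedetti--Petronio after recalling the definition of geometric convergence. So there is no in-paper argument to compare against, and your proposal is the natural one: descend the $f_i$ to quasi-isometric embeddings and compare lengths of essential loops. Two steps, however, are flawed or incomplete.

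First, the step ``choose $r$ so that a connected lift $\wtilde\boV$ of $\boV$ sits inside $B(o,r)$'' cannot be done as stated. Because $\barre M_{[\eps,\infty)}$ is a deformation retract of $\barre M$, the inclusion $\boV\hookrightarrow\barre M$ is $\pi_1$-surjective, so $\barre\Pi^{-1}(\boV)$ is connected and noncompact; no connected component of the preimage of $\boV$ fits in any metric ball. What you actually need is merely a \emph{compact} subset $K\subset\H^3$ with $\barre\Pi(K)\supset\overline{\boV}$ (a fundamental-domain-type set, not a lift), and then to take $r$ so large that $K$ together with its $\bar\eps$-collar lies in $B(o,r)$. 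The equivariance of $f_i$ then makes $\phi_i(\barre\Pi(z))=\Pi_i(f_i(z))$ well defined on $\barre\Pi(B(o,r))\supset\boV$. This is a fixable but genuine error in the argument as written.

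Second, you assert without justification that $\phi_i^{-1}(\alpha)$ is essential in $\barre M$ (and, in the converse direction, that $\phi_i(\beta)$ is essential in $M_i$). The fact that $\phi_i$ is a homeomorphism onto its image only controls homotopy \emph{within} $\boV$ and $\phi_i(\boV)$, not within the ambient manifolds. The correct justification is precisely the equivariance you invoked to build $\phi_i$: lift $\alpha$ through $\Pi_i$ starting at $f_i(\tilde p)$; the lift equals $f_i$ composed with the lift of $\phi_i^{-1}(\alpha)$ through $\barre\Pi$ starting at $\tilde p$ (this is where $r$ must be large enough for the lifted loop to stay in $B(o,r)$), and since $f_i$ is injective on the relevant compact set for $i$ large, one lift closes up if and only if the other does. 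This step is the real content of the lemma and should be made explicit. Finally, a small point of phrasing: ``by the Margulis lemma its length is at most $\bar\eps$'' is not correct---the shortest essential loop at a thick-part point can be long. One should instead split into the trivial case where the shortest essential loop at $\phi_i(p)$ already has length $\ge\bar\eps\ge\eps$, and the case where it is shorter than $\bar\eps$, to which your pull-back argument applies.
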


Here $k_i$-quasi isometry must be understood as smooth maps $\phi_i$ such that
$$
\frac1{k_i}d(p,q)\le d(\phi_i(p),\phi_i(q))\le k_id(p,q)
$$
When we will use these properties, we will not forget that $\phi_i$ come from maps $f_i$ that
are $C^\infty$ close to $\id$.

Actually, $\eps$ is always chosen small enough such that the $\eps$-thin part of $\barre
M$ contains only cusp ends. Moreover if $\eps$ is small enough each connected component of
${M_i}_{[\eps-\sigma_i,\eps+\sigma_i]}$ contains exactly one component of $\phi_i(\partial
\barre M_{[\eps,\infty)})$ (see Theorem~ E.2.4 in \cite{BePe}). The description of this
component of $\phi_i(\partial \barre M_{[\eps,\infty)})$ is given by the following result.

\begin{lem}\label{lem:conv2}
Let $(M_i)_i$, $\barre M$, $\eps>0$ and $\phi_i$ as above. Let $C$ be a connected
component of $\partial \barre M_{[\eps,\infty)}$. Then for $i$ large, $\phi_i(C)$ is a
graph of a function $u_i$
over the corresponding component $C_i$ of $\partial {M_i}_{[\eps,\infty)}$. Moreover
$u_i\to 0$ and $C$ and $C_i$ are $\kappa_i$-quasi-isometric with $\kappa_i\to 1$.
\end{lem}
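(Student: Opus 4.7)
The strategy is to lift everything to $\H^3$ and exploit both the $C^\infty$-convergence $f_i\to\id$ and the $C^\infty$-convergence of the boundary of the lifted thin parts of $M_i$ to a common horosphere. Fix a lift $\tilde C\subset\H^3$ of $C$ bounding a horoball $\tilde E$; in the upper half space model one may assume $\tilde E=\{x_3\ge 1\}$ and the stabilizer $\bar\Gamma_C$ is a rank-$2$ parabolic group of horizontal translations. Let $\tilde E_i\subset\H^3$ be the lift of the thin-part component of $M_i$ whose boundary projects to $C_i$. By geometric convergence, $\tilde E_i$ is either a horoball (cusp case) or a tubular neighborhood of the axis of a loxodromic element (tubular case, with the axis and its translation length degenerating to the parabolic fixed point of $\tilde E$), and in both cases $\partial\tilde E_i\to\tilde C$ in $C^\infty$ on compact subsets of $\H^3$. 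Since $f_i\to\id$ in $C^\infty$, we have $f_i(\tilde C)\to\tilde C$ in $C^\infty$, and by construction $\phi_i(C)=\Pi_i\circ f_i(\tilde C)$.

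\textbf{Graph property.} The thin-part component of $M_i$ is foliated by the tori $\{t=\mathrm{const}\}$ of Section~\ref{sec:cutuend}; this lifts to the foliation of a neighborhood of $\partial\tilde E_i$ by surfaces equidistant from $\partial\tilde E_i$, whose leaves are perpendicular to the unit normal vector field of $\partial\tilde E_i$. For $i$ large, $f_i(\tilde C)$ is $C^1$-close to $\tilde C$ and hence to $\partial\tilde E_i$, so its tangent planes are uniformly transverse to this foliation. The projection $P_i:f_i(\tilde C)\to\partial\tilde E_i$ along these normal geodesics is therefore a well-defined local diffeomorphism which descends, via $\Gamma_i$-equivariance, to a local diffeomorphism $\phi_i(C)\to C_i$. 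Both $\phi_i(C)$ and $C_i$ are embedded tori in the slab $M_{i,[\eps-\sigma_i,\eps+\sigma_i]}$, and by the previous lemma this slab is a topological $T^2\times I$, so the two tori are isotopic in it and $P_i$ has degree $1$. Hence $\phi_i(C)$ is the graph of a function $u_i$ over $C_i$.

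\textbf{Smallness and quasi-isometry.} The inclusion of $\phi_i(C)$ and $C_i$ in the slab $M_{i,[\eps-\sigma_i,\eps+\sigma_i]}$ forces $\|u_i\|_\infty\to 0$: in the $t$-coordinate, the thickness of the slab tends to $0$ as $\sigma_i\to 0$, because the $t$-height of a boundary torus is a continuous function of the Margulis threshold. The $C^1$-smallness of $u_i$ follows from the $C^\infty$-convergences $f_i(\tilde C)\to\tilde C$ and $\partial\tilde E_i\to\tilde C$. Finally, the map $C\to C_i$ obtained by composing $\phi_i\colon C\to\phi_i(C)$ with the normal projection $\phi_i(C)\to C_i$ is a composition of a $k_i$-quasi-isometry ($k_i\to 1$) with a map that, by $C^1$-smallness of $u_i$, has distortion tending to $1$; so it is a $\kappa_i$-quasi-isometry with $\kappa_i\to 1$.

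\textbf{Main obstacle.} The delicate point is to justify the uniform transversality used above when $\tilde E_i$ is a tube around an extremely short geodesic, so that $\tilde E_i$ itself becomes very long and thin as a subset of $\H^3$. One must check that on every compact subset of $\H^3$ avoiding the limit of the geodesic axes, the foliation by surfaces equidistant from $\partial\tilde E_i$ converges in $C^\infty$ to the horospherical foliation near $\tilde C$. This is what geometric convergence ultimately provides, but it is the step where the description of cusp and tubular ends from Section~\ref{sec:cutuend} must be combined carefully with the equivariant convergence $f_i\to\id$.
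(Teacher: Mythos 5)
Your approach is genuinely different from the paper's, and you have correctly identified where the gap lies, but it is a real gap, not a routine verification. The paper's proof is \emph{intrinsic}: it observes that $\phi_i(C)$ has curvature uniformly bounded (because $C$ has principal curvatures $1$ and $f_i\to\id$ in $C^\infty$), that $\phi_i(C)$ sits in a slab $A_i\simeq T_i\times[-\alpha_i,\beta_i]$ of $t$-thickness $\alpha_i+\beta_i\to 0$, and that $\phi_i(C)$ stays far from the core geodesic $\gamma_i$ (established by a mean-curvature comparison at the tangency with the innermost torus level, using the formula $(\tanh r+\coth r)/2$). Once those three facts are in hand, a simple estimate on geodesics of $\phi_i(C)$ --- if $|\partial_s\langle\gamma',\partial_t\rangle|\le k_0$ and $\gamma$ must stay in a slab of $t$-thickness $\to 0$, then $|\langle\gamma'(0),\partial_t\rangle|\to 0$ --- shows the tangent planes of $\phi_i(C)$ become uniformly almost-horizontal, hence $\phi_i(C)$ is a graph with small gradient. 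Nothing about $C^\infty$ convergence of the Margulis boundaries in $\H^3$ is used.

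Your proof, by contrast, hinges on the assertion that $\partial\tilde E_i\to\tilde C$ in $C^\infty$ on compact subsets of $\H^3$ (and hence that the equidistant foliation near $\partial\tilde E_i$ converges smoothly to the horospherical foliation near $\tilde C$). You flag this yourself as the ``main obstacle,'' but you leave it at ``this is what geometric convergence ultimately provides,'' which is a hand-wave: geometric convergence directly gives $C^\infty$ convergence of the equivariant maps $f_i\to\id$, not $C^\infty$ convergence of the level sets of the injectivity-radius function, and in the degenerating tubular case the tube $\tilde E_i$ is a thin cylinder whose axis escapes to the parabolic fixed point, so its boundary and equidistant surfaces have to be shown to flatten out to horospheres near $\tilde C$. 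Absent that, the ``hence to $\partial\tilde E_i$'' and ``uniformly transverse'' steps in your graph argument are unsupported. The paper's mean-curvature comparison plus bounded-curvature-in-a-thin-slab trick is precisely the device that sidesteps this: it establishes a pointwise transversality estimate directly from curvature bounds, replacing the global $C^\infty$ convergence of foliations that your route requires. If you want to pursue your route, you would need to prove, say, that for a fixed compact $K\subset\H^3$ and loxodromic $\tau_i$ with translation length $\to 0$ whose axes escape to a parabolic fixed point of $\bar\Gamma_C$, the boundary tori of the Margulis tube of $\tau_i$ converge on $K$ in $C^\infty$ to the corresponding horosphere; that computation is the missing content, and it is the bulk of the work.
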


\begin{proof}
$C$ is a surface with principal curvatures $1$. Thus $\phi_i(C)$ has principal curvatures
close to $1$ and between $1/2$ and $3/2$. 

Since $\phi_i(C)\subset A_i$ where $A_i$ is a component of
${M_i}_{[\eps-\sigma_i,\eps+\sigma_i]}$, $\phi_i(C)$ is contained either
in a cusp end of $M_i$ or a neighborhood of a short geodesic of $\gamma_i$. In the second
case, there is a smallest $\delta_i\le \sigma_i$ such that $\phi_i(C)\subset
{M_i}_{[\eps-\sigma_i,\eps+\delta_i]}$ so $\phi_i(C)$ is tangent to a
boundary torus of $\partial {M_i}_{[\eps_i+\delta_i,\infty)}$. The comparison of the mean
curvature at this tangency point gives the mean curvature of $\partial
{M_i}_{[\eps_i+\delta_i,\infty)}$ is close to $1$. Thus the distance from $\gamma_i$ to
$\phi_i(C)$ is very large and goes to $+\infty$. 

In both cases, $A_i$ is described as $T_i\times[-\alpha_i,\beta_i]$ with a metric
$d\sigma_{i,t}^2+dt^2$ with $\alpha_i,\beta_i\to0$ and $C_i=T_i\times\{0\}$ is the boundary of
${M_i}_{[\eps,\infty)}$ in $A_i$.

Let $\gamma$ be a geodesic in $\phi_i(C)$. Since $\phi_i(C)$ has curvature uniformly bounded,
there is $k_0$ such that $|\partial_s(\gamma'(s),\partial_t)|\le k_0$ so
$|(\gamma'(s),\partial_t)|\ge |(\gamma'(0),\partial_t)|/2$ for
$0<s<s_0=|(\gamma'(0),\partial_t)|/(2k_0)$. Looking at the $t$ coordinate along $\gamma$,
we then have $\beta_i+\alpha_i\ge|t(\gamma(s_0))-t(\gamma(0))|\ge
\frac{|(\gamma'(0),\partial_t)|^2}{4k_0}$. Since $\alpha_i+\beta_i\to 0$, this implies
that the angle between $\phi_i(C)$ and $\partial_t$ goes to $\pi/2$ uniformly. Since
$\phi_i(C)$ is embedded this implies that
$\phi_i(C)$ is a graph over $C_i$: there is a function $u_i:C_i\to \R$
such that $\phi_i(C)=\{(p,t)\in T\times \R|t=u_i(p)\}$.
Since the angle between $\phi_i(C)$ and $\partial_t$ goes to $\pi/2$, the
gradient of $u_i$ goes to $0$. Besides $\phi_i(C)\subset A_i$ so $|u_i|$ is close to
$0$.

This implies that $(p,u_i(p))\in\phi_i(C)\mapsto (p,0)\in C_i$ is
a  $\kappa_i$-quasi-isometry ($\kappa_i\to 1$) which can be composed with $\phi_i:C\to
\phi_i(C)$ to obtain a $\kappa_ik_i$-quasi-isometry.
\end{proof}

As consequence, we have the following result.

\begin{cor}\label{cor:diamsys}
Let $V_0$ be positive then there are $\ell_0$, $\delta_0$, $\s_0$ such the following is
true. Let $M$ be a cusp manifold with volume less than $V_0$ and $\gamma$ be a geodesic
loop of length $\ell\le \ell_0$. Then $S_{R_\ell}=\partial N_{R_\ell}$ has diameter less than $\delta_0$ and
systole larger than $\s_0$.
\end{cor}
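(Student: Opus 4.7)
The plan is to describe $S_{R_\ell}$ explicitly as a flat torus in orthonormal coordinates and combine Meyerhoff's asymptotic with Minkowski's second theorem on successive minima. Using $(x_1,x_2)=(\sinh(R_\ell)\,\theta,\cosh(R_\ell)\,z)$, the torus $S_{R_\ell}$ is isometric to $\R^2/\Gamma_\ell$ where $\Gamma_\ell$ is generated by
$v_1=(2\pi\sinh R_\ell,0)$ and $v_2=(\alpha\sinh R_\ell,\ell\cosh R_\ell)$;
its area equals $\pi\ell\sinh(2R_\ell)$.

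For the systole, I will observe that any nonzero lattice vector $mv_1+nv_2$ has second coordinate $n\ell\cosh R_\ell$, so its length is at least $|n|\ell\cosh R_\ell$ when $n\ne 0$, and equals $2\pi|m|\sinh R_\ell$ when $n=0$. By the Meyerhoff asymptotic $\sinh^2 R_\ell\sim\cosh^2 R_\ell\sim\sqrt{3}/(4\pi\ell)$ recalled just above, $\ell\cosh R_\ell\to\sqrt{\sqrt{3}/(4\pi)}>0$ and $\sinh R_\ell\to\infty$ as $\ell\to 0$. Choosing $\ell_0$ small enough makes both $\ell\cosh R_\ell$ and $2\pi\sinh R_\ell$ bounded below by a positive universal constant $\s_0$, yielding the systole bound.

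For the diameter, the area $\pi\ell\sinh(2R_\ell)\to\sqrt{3}/2$ is bounded above by some $A_0$ for $\ell\le\ell_0$. Minkowski's second theorem applied to the $2$-dimensional lattice $\Gamma_\ell$ gives $\lambda_1\lambda_2\le\frac{2}{\sqrt{3}}A_0$ for the successive minima $\lambda_1\le\lambda_2$. Combined with $\lambda_1\ge\s_0$, this forces $\lambda_2\le 2A_0/(\sqrt{3}\,\s_0)$. A Lagrange--Gauss reduced basis $w_1,w_2$ of $\Gamma_\ell$ then satisfies $|w_i|=\lambda_i\le 2A_0/(\sqrt{3}\,\s_0)$, and any point of $\R^2/\Gamma_\ell$ lies in the fundamental parallelogram spanned by $w_1,w_2$, whose diameter is at most $|w_1|+|w_2|\le\delta_0:=4A_0/(\sqrt{3}\,\s_0)$.

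The argument is essentially a direct computation once Meyerhoff's formula is in hand, so there is no real obstacle. One feature worth noting: the volume bound $V_0$ does not appear to enter the argument---the constants $\ell_0,\delta_0,\s_0$ come out universal, and the $V_0$ hypothesis seems to reflect the broader context of geometric convergence of cusp manifolds rather than a necessity for this specific corollary.
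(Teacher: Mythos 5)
Your computation of the asymptotic for $\ell\cosh R_\ell$ is wrong, and this is fatal to the argument. From $\cosh^2 R_\ell\sim\frac{\sqrt 3}{4\pi\ell}$ one gets $\cosh R_\ell\sim\frac{1}{\sqrt{\ell}}\left(\frac{\sqrt 3}{4\pi}\right)^{1/2}$, hence $\ell\cosh R_\ell\sim\sqrt{\ell}\left(\frac{\sqrt 3}{4\pi}\right)^{1/2}\to 0$, not to a positive constant. (This is also forced by the area computation you quote: $2\pi(\ell\cosh R_\ell)\sinh R_\ell\to\sqrt 3/2$ while $\sinh R_\ell\to\infty$.) Consequently your claimed lower bound on the systole collapses: the lattice vector $mv_1+v_2=((2\pi m+\alpha)\sinh R_\ell,\;\ell\cosh R_\ell)$ has vanishing second coordinate, and the twist parameter $\alpha$ is an arbitrary real (not controlled by $\ell$), so by choosing $m$ with $2\pi m+\alpha$ close to $0$ one sees that the shortest non-meridian lattice vector can a priori be arbitrarily small. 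Since the diameter bound in your argument is deduced from the systole bound via Minkowski's second theorem, it fails as well.

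Your closing remark that the volume bound $V_0$ ``does not appear to enter'' is also incorrect; the hypothesis is essential. The twist $\alpha$ and hence the conformal type of $S_{R_\ell}$ is not controlled by $\ell$ and Meyerhoff's formula alone, and without the volume bound there is nothing preventing a degenerating family of tube-boundary tori. The paper's proof is a compactness argument: it invokes the geometric compactness of the set of cusp manifolds of volume $\le V_0$, argues by contradiction with a sequence $M_i\to\barre M$, and uses Lemma~\ref{lem:conv2} to compare $S_{R_{\ell_i}}$ to a fixed torus $C$ in the limit $\barre M$ via uniform quasi-isometries, plus the fact that $|S_{R_{\ell_i}}|\to\sqrt 3/2$ so the distance from $C_i$ to $S_{R_{\ell_i}}$ stays bounded. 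That is where the geometric control on both diameter and systole actually comes from. A purely formula-based proof along your lines would require an independent lower bound on the systole of the tube boundary, which the explicit lattice description does not provide.
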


The set of flat tori with diameter less than $\delta_0$ and systole larger than 
$\s_0$ is a compact subset of the set of flat tori.

\begin{proof}
If it not true there is a sequence of cusp manifolds $M_i$ that converge to $\barre M$
and in $M_i$ there is a geodesic loop $\gamma_i$ of length $\ell_i\to 0$ such that either the
diameter of $S_{R_{\ell_i}}$ goes to $\infty$ or its systole goes to $0$.

After taking a subsequence, we can assume that the tubular ends around $\gamma_i$
converges to one cusp end in $\barre M$. Let $\eps>0$ be small and consider $C$ the
component of $\partial \barre M_{[\eps,\infty)}$ inside this cusp end. Let $C_i$ be the
component of $\partial {M_i}_{[\eps,\infty)}$ inside the tubular end around $\gamma_i$. By
the above lemma, $C$ and $C_i$ are $2$ quasi-isometric. So the area $C_i$ is close to that
 of $C$. Since the area of $S_{R_{\ell_i}}$ in $M_i$ is close to $\sqrt 3/2$ this
implies that the distance between $C_i$ and $S_{R_{\ell_i}}$ is uniformly bounded. Since
the diameter and the systole of $S_{R_{\ell_i}}$ differ from those of $C_i$ by at most
a uniform factor. This contradicts that either the diameter goes to $\infty$ or the
systole goes to $0$.
\end{proof}

\begin{remarq}
Let us consider a particular one sided neighborhood of $\phi_i(C)$ in $M_i$.
Actually, let $\barre A$ be the part of the $2$-tubular neighborhood of $C$
inside $\barre M_{[\eps,\infty)}$. Thus $\phi_i(\barre A)$ is a one sided
neighborhood of
$\phi_i(C)$.

$\barre A$ can be parametrized by $T\times[-2,0]$ with the metric
$g=e^{-2t}d\bar\sigma^2+dt^2$. Let $X:T\times[-2,0]\to \barre M$ be this parametrization
and $(x_1,x_2)$ be orthonormal coordinates such that $g=e^{-2x_3}(dx_1^2+dx_2^2)+dx_3^2$.
Let us now
estimate the metric $\phi_i^*g_i$. We notice that $X$ lifts to an equivariant map
$\widetilde X:\R^2\times[-2,0]\to \H^3$ \textit{i.e.} $X=\barre \Pi \widetilde X$. If $\tilde
g_i=\tilde g_{i,kl}dx_kdy_l$ we have 
\begin{align*}
\tilde g_{i,kl}&=\la d\phi_i X_{y_k},d\phi_i X_{y_l} \ra_{M_i}\\
&=\la d\phi_i d\barre \Pi \widetilde X_{y_k},d\phi_i d\barre \Pi \widetilde X_{y_l} \ra_{M_i}\\
&=\la d\Pi_i df_i \widetilde X_{y_k},d\Pi_i df_i  \widetilde X_{y_l} \ra_{M_i}\\
&=\la df_i \widetilde X_{y_k}, df_i  \widetilde X_{y_l} \ra_{\H^3}
\end{align*}
since $\Pi_i$ is a local isometry. Since $f_i$ converges to the identity map in the
$C^\infty$ topology this implies that $\tilde g_i\to g$ in the
$C^\infty$ topology.
\end{remarq}

\begin{remarq}
The topology of a complete finite volume hyperbolic $3$- manifold determines its
hyperbolic structure. Thus if a converging sequence $M_i\to \barre M$ is not constant,
there is a subsequence whose topologies are distinct from that of $\barre M$. then there
are short geodesics $\gamma_i$ in $M_i$ whose lengths converge to zero and whose maximal embedded
tubular neighborhoods are converging to cusp ends of $\barre M$ (see Figure~\ref{fig:fig4}).

\begin{figure}[h]
\begin{center}
\resizebox{0.95\linewidth}{!}{\input{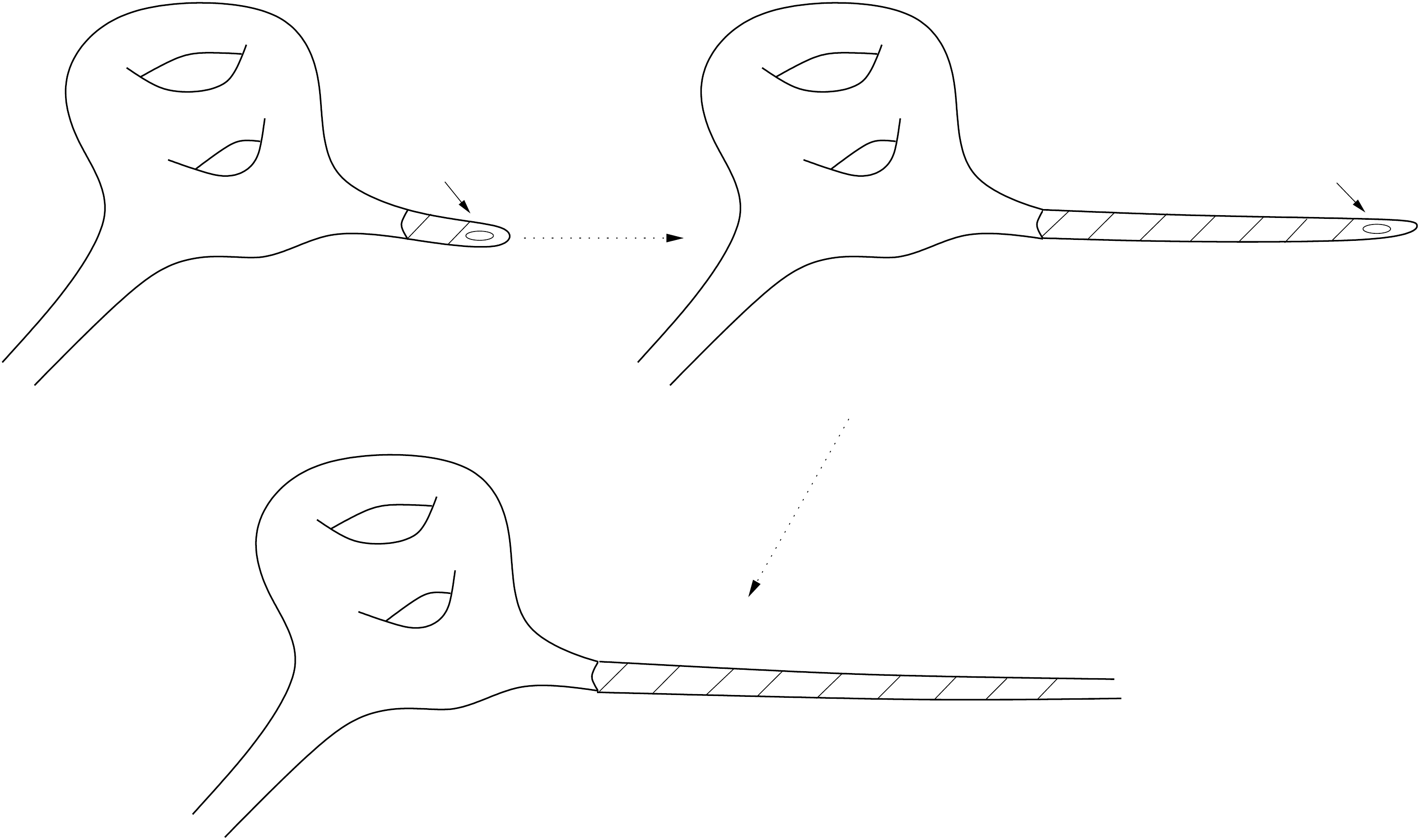_t}}
\caption{A schematic converging sequence $M_i\to \barre M$\label{fig:fig4}}
\end{center}
\end{figure}

\end{remarq}


\section{Transversallity in tubular ends}\label{sec:trans}

The aim of this section is to understand the behaviour of a minimal surface in a tubular
end when we know \text{a priori} an upper bound on its curvature. A similar study was
made for cusp ends in \cite{CoHaMaRo}.

In this section, we use the tubular coordinates $(z,\theta,r)$.


\subsection{An intersection property}

We recall that, if $c$ is a geodesic in $\H^3$, $V_r$ denotes its tubular neighborhood of
radius $r$. Moreover, for $r>0$, we denote $B_r=\partial V_r$.

\begin{lem}\label{lem:prox}
Let $k_0$ and $\eps_0$ be positive, then there are $r_0$ and $\eta_0$ such that the
following is true. Let
$c$ be a geodesic in $\H^3$. Let $r\in[0,r_0]$ and $p_i=(z_i,\theta_i,r)$ ($i=1,2$) be
two points in $V_{r_0}$ such that $\theta_2\in[\theta_1+\frac\pi 3,\theta_1+\frac{2\pi}3]$ and
$z_2\in[z_1-\eta_0,z_1+\eta_0]$. Let $\Sigma_i$, $i=1,2$ be  surfaces in $V_{r_0}$ whose
curvatures are bounded by $k_0$, $p_i\in\Sigma_i$ and
$d_{\Sigma_i}(p_i,\partial\Sigma_i)>\eps_0$. If both $\Sigma_i$ are tangent to $B_r$ at
$p_i$ (if $r=0$ we assume moreover that a unit normal vector to $\Sigma_i$ at $p_i$ is
$\partial_r(z_i,\theta_i,0)$) then $\Sigma_1$ and $\Sigma_2$ has non empty transversal
intersection.
\end{lem}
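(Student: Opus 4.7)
The strategy is to work in a Euclidean normal chart around a segment of $c$ containing both $p_1$ and $p_2$; inside this chart each $\Sigma_i$ is a small $C^2$-perturbation of its affine tangent plane $P_i=T_{p_i}B_r$, and the angular hypothesis $\theta_2-\theta_1\in[\pi/3,2\pi/3]$ forces $P_1$ and $P_2$ to meet transversally along a line $L$ threading through the graph domains of both surfaces. A standard implicit function argument then produces $\Sigma_1\cap\Sigma_2$ as a smooth transversal curve close to $L$.

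I would begin with the classical graph lemma: the curvature bound $|A_{\Sigma_i}|\le k_0$ together with $d_{\Sigma_i}(p_i,\partial\Sigma_i)\ge\eps_0$ yields a radius $\rho_0=\rho_0(k_0,\eps_0)>0$ and a constant $C=C(k_0)$ such that the connected component of $p_i$ in $\Sigma_i\cap B(p_i,\rho_0)$ is a $C^2$ graph of a function $u_i$ over $T_{p_i}\Sigma_i$ with $|du_i|\le 1/10$ and $|\Hess u_i|\le C$. Choosing $r_0$ and $\eta_0$ much smaller than $\rho_0$ and using an exponential chart centered on $c$ between $p_1$ and $p_2$, the hyperbolic metric agrees with a flat one up to an error of order $\rho_0^2$, and in this chart $p_i\approx (r\cos\theta_i,r\sin\theta_i,z_i)$ with outward normal $\nu_i\approx(\cos\theta_i,\sin\theta_i,0)$ to $P_i$.

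The angular hypothesis gives $|\nu_1\cdot\nu_2|=|\cos(\theta_2-\theta_1)|\le 1/2$, so $P_1$ and $P_2$ meet at an angle at least $\pi/3$, and $P_1\cap P_2=L$ is a line parallel to $\partial_z$ at distance $r/\cos((\theta_2-\theta_1)/2)\le 2r_0$ from $c$; in particular $L$ lies well inside the graphing disks of both $u_i$. Since $P_1$ and $P_2$ meet at uniform angle, I can rewrite $\Sigma_2$ as a graph $\tilde u_2$ over $P_1$ with bounded $C^1$-norm. Both $u_1$ and $\tilde u_2$ are $O(r_0^2+\eta_0^2)$-small on the relevant disk, whereas the signed distance from $P_2$ measured inside $P_1$ grows linearly with slope at least $\sin(\pi/3)=\sqrt 3/2$ transverse to $L$. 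Hence $u_1-\tilde u_2$ is a small perturbation of a linear function with large slope, its zero set is a smooth curve, and that curve is precisely $\Sigma_1\cap\Sigma_2$; transversality follows from the angle between $P_1,P_2$ together with the $C^1$-smallness of the perturbations, and a point of this curve close to $L$ lies in both intrinsic $\eps_0$-disks.

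The main obstacle is calibrating the scales: $\rho_0$ is fixed by $(k_0,\eps_0)$, the linear separation of $P_1,P_2$ is of order $r\sin(\pi/3)$, and the perturbation of $\Sigma_i$ from $P_i$ is of order $r_0^2+\eta_0^2$. A suitable smallness choice of $r_0,\eta_0$ relative to $\rho_0$ and $\sin(\pi/3)$ makes the perturbation subdominant. A mild extra care is required for the limiting case $r=0$, where $L$ coincides with $c$ itself; one then uses directly the specified normals $\partial_r(z_i,\theta_i,0)$ to rerun the same argument with $L=c$.
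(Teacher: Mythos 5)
Your proof is correct and follows essentially the same route as the paper: pass to a comparable Euclidean metric near the core geodesic, invoke a uniform graph lemma over the tangent planes $T_{p_i}B_r$, use the angular constraint $\theta_2-\theta_1\in[\pi/3,2\pi/3]$ to make the tangent planes meet at definite angle along a line near both base points, and choose $r_0,\eta_0$ small relative to the graph radius so the $C^2$-small deviations cannot destroy the transversal crossing. The paper presents this more tersely (via a figure), while you spell out the graph-over-$P_1$/implicit-function-theorem step; the only cosmetic slip is the phrase claiming $\tilde u_2$ itself is $O(r_0^2+\eta_0^2)$-small — it is the deviation of $\tilde u_2$ from the linear distance-to-$P_2$ function that is small — but your subsequent sentence makes the intended meaning clear.
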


\begin{proof}
We look for $r_0\le 2$. In $V_2$ the hyperbolic metric is $\cosh^2 r dz^2+\sinh ^2
rd\theta^2+dr^2$. Let us change
the metric in $V_2$ to the Euclidean metric $g_e=dz^2+r^2 d\theta^2+dr^2$. So there are
constants $\tilde k_0$ and $\tilde \eps_0$ depending only on $k_0$ and $\eps_0$ such
that, with $g_e$, $\Sigma_1$ and $\Sigma_2$ have curvature bounded by
$\tilde k_0$ and $d_{\Sigma_i}(p_i,\partial\Sigma_i)>\tilde\eps_0$.

Thus there is $\eta_1>0$ such that $\Sigma_i$ can be described as a graph over the Euclidean
disk of radius $\eta_1$ tangent to $\Sigma_i$ at $p_i$ (see Proposition~2.3 in
\cite{RoSoTo}). Moreover if $\eta_1$ is chosen
small enough, the gradient of the function parametrizing $\Sigma_i$ is less than
$1/10$.

Let $r_0=\eta_0=\eta_1/10$. With these choices, the tangent disks of radius $\eta_1$
tangent to $\Sigma_i$ at $p_i$ must intersect at an angle between $\pi/3$ and $2\pi/3$
(see the schematic figure~\ref{fig:fig1}). Moreover since each $\Sigma_i$ is at a distance less
than $\eta_1/10$ from its tangent disk, $\Sigma_1$ and $\Sigma_2$ must intersect and, as
the gradient is less than $1/10$ and the angle between the disks is in $[\pi/3,2\pi/3]$,
the intersection is transverse.
\end{proof}

\begin{figure}[h]
\begin{center}
\resizebox{0.5\linewidth}{!}{\input{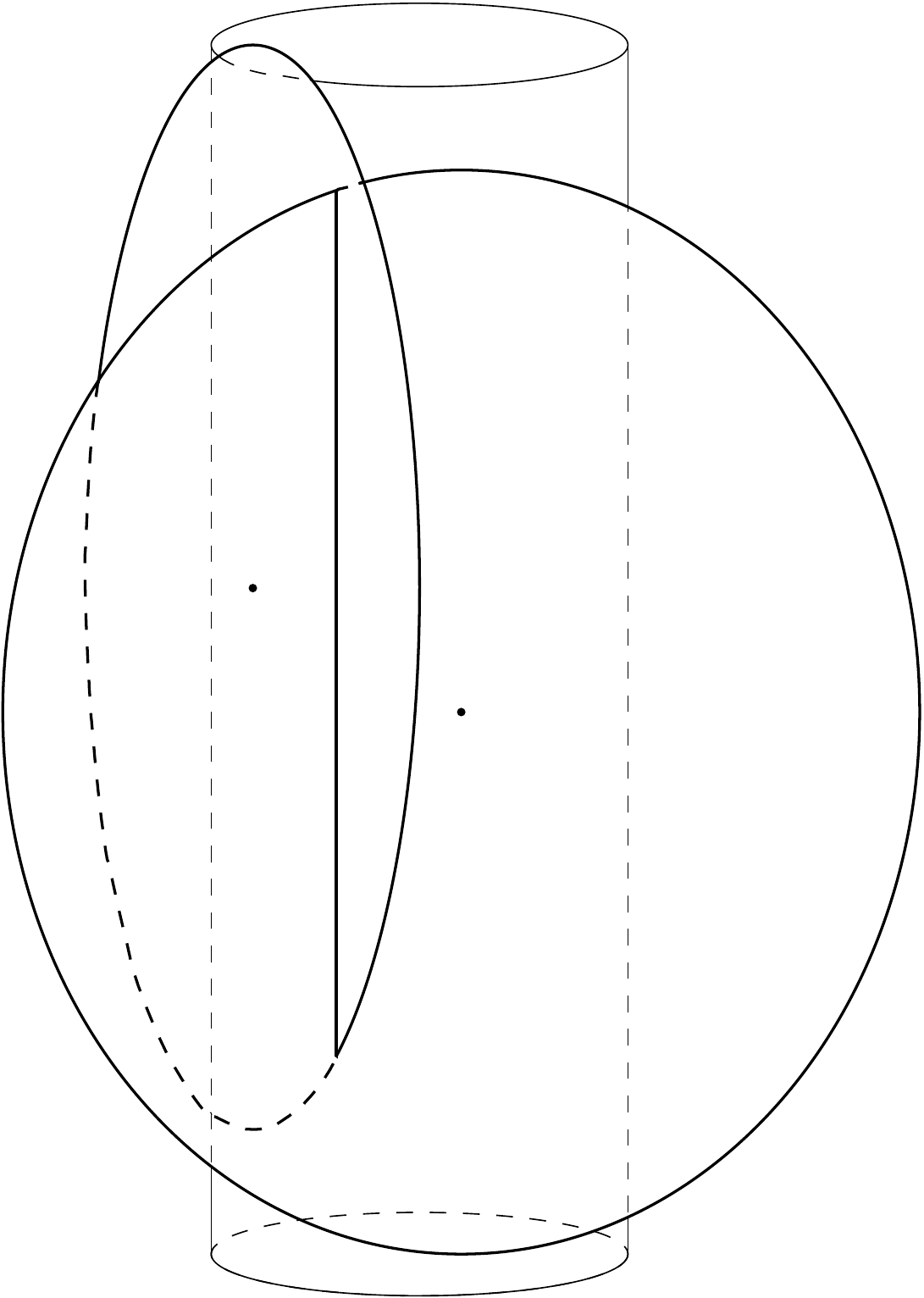_t}}
\caption{\label{fig:fig1}}
\end{center}
\end{figure}

\subsection{The transversality result}

The main result of the section is then the following. We recall that $S_r=\partial N_r$.

\begin{prop}\label{prop:trans}
Let $\delta_0$, $k_0$ and $\eps_0$ be positive, then there is $\ell_0>0$ and $\barre R$ such that
the following is true. Let $\ell\le \ell_0$ and $N_{R_\ell}$ be the hyperbolic tubular neighborhood
of a geodesic loop $\gamma$ of length $\ell$ and such that the diameter of
$S_{R_\ell}$ is less than
$\delta_0$. Let $\Sigma$ be an embedded minimal surface in $N_{R_\ell}$ whose curvature is
bounded by $k_0$. Let $\bar r<R_\ell-\barre R$ and $p$ be a point in $\Sigma \cap S_{\bar
r}$ such that $d_\Sigma(p,\partial \Sigma)>\eps_0$. Then $\Sigma$ is not tangent to
$S_{\bar r}$ at $p$.
\end{prop}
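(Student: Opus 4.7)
The argument is by contradiction: assume $\Sigma$ is tangent to $S_{\bar r}$ at $p$. The strategy is to lift $\Sigma$ to the simply connected tube $V_{R_\ell}\subset\mathbb{H}^3$, to exhibit a second tangent point of the lift on $B_{\bar r}$ at a carefully chosen relative position by exploiting the diameter control coming from Corollary~\ref{cor:diamsys}, and then to invoke Lemma~\ref{lem:prox} to force a transverse self-crossing of the lift, contradicting embeddedness.

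Let $\widetilde\Sigma=\pi^{-1}(\Sigma)\subset V_{R_\ell}$, where $\pi$ is the covering with deck group generated by $\tau(z,\theta,r)=(z+\ell,\theta+\alpha,r)$. Then $\widetilde\Sigma$ is embedded in $V_{R_\ell}$, has curvature bounded by $k_0$, and is tangent to $B_{\bar r}$ at every point of the $\tau$-orbit $\tau^n\widetilde p=(z_0+n\ell,\theta_0+n\alpha,\bar r)$ ($\theta$ mod $2\pi$) of a lift $\widetilde p=(z_0,\theta_0,\bar r)$ of $p$. Let $(r_0,\eta_0)$ denote the constants from Lemma~\ref{lem:prox} attached to $(k_0,\eps_0)$. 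The crux of the argument is to find $n\in\mathbb{Z}\setminus\{0\}$ with $|n\ell|<\eta_0$ and $n\alpha\bmod 2\pi\in[\pi/3,2\pi/3]$. Once such $n$ is produced, applying Lemma~\ref{lem:prox} with $\Sigma_1=\Sigma_2=\widetilde\Sigma$ at tangent points $\widetilde p$ and $\tau^n\widetilde p$ yields a transverse intersection of the two local graph patches of $\widetilde\Sigma$ centered at these points; since they are distinct patches of the same embedded surface, their transverse meeting produces two sheets of $\widetilde\Sigma$ through a common ambient point, contradicting embeddedness.

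To locate such $n$, I use that for $\ell_0$ small, Corollary~\ref{cor:diamsys} gives $\operatorname{diam}(S_{R_\ell})\le\delta_0$, i.e.\ the lattice $\Gamma=\mathbb{Z}(\ell,\alpha)+\mathbb{Z}(0,2\pi)$ has covering radius $\le\delta_0$ in the flat metric $g_{R_\ell}=\cosh^2 R_\ell\,dz^2+\sinh^2 R_\ell\,d\theta^2$ on $\mathbb{R}^2$. Taking the target $q_\ast=(0,\pi/2)$, there exists $(n,m)\in\mathbb{Z}^2$ with
\[
|n\ell|\cosh R_\ell\le\delta_0,\qquad |n\alpha+2\pi m-\pi/2|\sinh R_\ell\le\delta_0.
\]
Since $R_\ell\to\infty$ as $\ell\to 0$, both right-hand sides vanish in the limit, so for $\ell_0$ small enough one has $|n\ell|<\eta_0$ and $n\alpha\bmod 2\pi\in[\pi/3,2\pi/3]$. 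The case $n=0$ is excluded because then the nearest candidate $(0,2\pi m)$ is at $g_{R_\ell}$-distance $\ge(\pi/2)\sinh R_\ell$ from $q_\ast$, which exceeds $\delta_0$ for $\ell_0$ small. The intrinsic distance condition $d_{\widetilde\Sigma}(\widetilde p,\partial\widetilde\Sigma)\ge\eps_0$ is inherited from $\Sigma$ once $\barre R$ is chosen large enough that the outer boundary $\{r=R_\ell\}$ sits beyond the local graph region of $\widetilde\Sigma$ at $\widetilde p$.

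The main technical obstacle is the hypothesis $\bar r\le r_0$ built into Lemma~\ref{lem:prox}, whereas the proposition permits $\bar r<R_\ell-\barre R$, which for $\ell$ small can far exceed $r_0$. For $\bar r\le r_0$ the plan proceeds verbatim. For $\bar r\in(r_0,R_\ell-\barre R]$ one must redo the tangent-disk comparison of Lemma~\ref{lem:prox} at scale $\bar r$, exploiting that a neighborhood of $B_{\bar r}$ carries the approximately flat warped-product metric $\cosh^2\bar r\,dz^2+\sinh^2\bar r\,d\theta^2+dt^2$, and adjusting the target $q_\ast$ (so that $\tau^n\widetilde p$ falls within the intrinsic graph radius of $\widetilde\Sigma$ at $\widetilde p$ in this intrinsic geometry). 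Controlling this reduction uniformly in $\bar r$, while keeping $\barre R$ and $\ell_0$ independent of $\bar r$, is the technical heart of the argument.
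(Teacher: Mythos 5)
Your treatment of the regime $\bar r\le r_0$ is essentially the paper's argument: lift to $V_{R_\ell}$, use the diameter bound $\operatorname{diam} S_{R_\ell}\le\delta_0$ to produce a deck element $\tau^n$ ($n\neq 0$) that moves a lift of $p$ a $z$-distance $<\eta_0$ while rotating $\theta$ by roughly $\pi/2$, then invoke Lemma~\ref{lem:prox} to get a transverse self-crossing of the embedded lift. (A small inaccuracy: the intrinsic distance to $\partial\widetilde\Sigma$ is automatically $>\eps_0$ because the covering is a local isometry, so that is not what $\barre R$ is for; $\barre R$ only enters in the other regime.)

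The gap is the regime $r_0<\bar r\le R_\ell-\barre R$, which you defer as ``the technical heart.'' Your sketch of rescaling Lemma~\ref{lem:prox} to ``scale $\bar r$'' and shrinking the target so that $\tau^n\widetilde p$ lands within the graph radius cannot be made to work: the whole mechanism of Lemma~\ref{lem:prox} is that for small $r$ the torus $B_r$ has small radius, so a $\theta$-rotation by about $\pi/2$ simultaneously keeps the two tangent points ambiently close \emph{and} tilts the tangent planes to $B_r$ by about $\pi/2$. For $\bar r$ large (and $\bar r\to\infty$ along $\ell\to 0$ is allowed), a $\theta$-rotation by $\pi/2$ moves points ambiently by $\sim\tfrac{\pi}{2}\sinh\bar r$; conversely any deck element that keeps $\tau^n\widetilde p$ near $\widetilde p$ changes $\theta$ by $O(\delta_0/\sinh R_\ell)$, so the two tangent planes to $B_{\bar r}$ are nearly \emph{parallel}. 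There is then no transversal crossing to extract, and the tangent-disk comparison collapses. The paper handles this regime by an entirely different mechanism: it assumes failure along a sequence with $\ell_n\to 0$, writes the doubly periodic lift near the tangency as a graph via the uniform graph lemma (Lemma~\ref{lem:estimgraph}), passes to a limit (after blowing up if $r_n\to\infty$), notes that the rescaled period vectors tend to zero so the limit graph $u$ satisfies $u\le u_{v_0}$ or $u\ge u_{v_0}$ for every small shift $v_0$, and then compares $u$ with the graph $h$ of the totally geodesic surface tangent at the same point using Bers' theorem (so $u-h$ is a harmonic polynomial of degree $\ge2$ plus lower order) to derive a contradiction with that ordering. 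That argument, not a rescaled Lemma~\ref{lem:prox}, is what supplies $\barre R$; your proposal is therefore incomplete in a way that matters.
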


We notice that for $\bar r=0$, $S_{\bar r}$ is just the central geodesic $\gamma$ so the
proposition states that $\Sigma$ can not be tangent to $\gamma$.

\begin{proof}
We start with some $\ell_0$ such that $R_\ell>10$. Let $r_0\le 1 $ and $\eta_0$ be given by
Lemma~\ref{lem:prox} for $k_0$ and $\eps_0$ (we assume $\eps_0\le 1$). We first prove that the result is true if
$\bar r\le r_0$.

Let $\Sigma$ be a minimal surface as in the statement of the proposition and assume that
$\Sigma$ is tangent at $p$ to $S_r$ for some $r$. We consider the lift 
$\widetilde\Sigma$ of $\Sigma$ to $\H^3$. $\widetilde \Sigma$ is then contained in a solid
cylinder $V_{R_\ell}$. 

The surface $\widetilde\Sigma$ is then an embedded minimal surface (may be non connected) which is invariant by
the action of the loxodromic transformation $\tau:(z,\theta,r)\mapsto (z+\ell,\theta+\alpha,r)$.
Let $p_1$ be a lift of $p$. We can assume that $p_1=(0,0,\bar r)$; if $\bar r=0$, we assume that
$\partial_r(0,0,0)$ is the unit normal vector to $\widetilde\Sigma$.

$S_{R_\ell}$ has a diameter less than $\delta_0$. So, for any $q$ in $B_{R_\ell}$, the intrinsic
disk of radius $\delta_0$ in
$B_{R_\ell}$ and center $q$ must contain an image of $(0,0,R_\ell)$ by some $\tau^n$.

Let us consider the domain $A_r=\{(z,\theta,r)\in B_r| z\in [-\frac{\delta_0}{\cosh
R_\ell},\frac{\delta_0}{\cosh R_\ell}], \theta\in[\frac\pi2-\frac{\delta_0}{\sinh
R_\ell},\frac\pi2+\frac{\delta_0}{\sinh R_\ell}]\}$, $A_{R_\ell}$ is a square in
$B_{R_\ell}$ whose
edges have length $2\delta_0$. 
So $A_{R_\ell}$ contains an image of $(0,R_\ell,0)$ by some $\tau^n$. This implies that $\tau^n$ is
the composition of a vertical translation by some $z'\in[-\frac{\delta_0}{\cosh
R_\ell},\frac{\delta_0}{\cosh R_\ell}]$ and a rotation by some $\theta'\in
[\frac\pi2-\frac{\delta_0}{\sinh R_\ell},\frac\pi2+\frac{\delta_0}{\sinh R_\ell}]$.

The point $p_2=\tau^n(p_1)=(z_2,\theta_2,\bar r)$ is another lift of $p$ in $A_{\bar r}$.
$\widetilde \Sigma$ is then also tangent to $B_{\bar r}$ at $p_2$. We have
$|\theta_2-\pi/2|\le \delta_0/\sinh R_\ell$ and $|z_2|\le\delta_0/\cosh R_\ell$. So we can choose
$\ell_0$ such that, for $\ell\le \ell_0$, $\delta_0/\sinh R_\ell\le \pi/6$ and $\delta_0/\cosh R_\ell\le
\eta_0$. Then we can apply Lemma~\ref{lem:prox} to the geodesic disks $\Sigma_i$ of radius
$\eps_0$ in $\widetilde\Sigma$ around $p_i$. Lemma~\ref{lem:prox} applies since, when
$\bar r =0$, the unit normal vector to $\Sigma_2$ at $p_2$ is
$\partial_r(z_2,\theta_2,0)$ with $|\theta_2-\pi/2|\le \delta_0/\sinh R_\ell$ ($\Sigma_2$ is the
image of $\Sigma_1$ by $\tau^n$). This gives that $\widetilde \Sigma$ has
transverse self-intersection which is impossible. So the result is proved for $\bar r\le
r_0$.

Let us now prove that we can extend this result to the region $r_0\le \bar r\le
R_L-\barre R$ for some $\barre R>0$.

If the result is not true, for any $n>0$, we can find a neighborhood $N_{R_{\ell_n}}$ of a closed
geodesic $\gamma_n$  of length $\ell_n\le \frac1n$ and a minimal surface
$\Sigma_n$ in $N_{R_{\ell_n}}$ which is tangent to $S_{r_n}$ at $p_n$ for some $r_n\le
R_{\ell_n}-\frac14\ln n$ (notice that $R_{\ell_n}-\frac14\ln n>0$).
Actually because of the first part we can assume $r_n>r_0$. In the following we
denote $R_{\ell_n}$ by $R_n$.

Let $\eta_1=\min(r_0/10,\eta_0)$ and replace the sequence $\Sigma_n$ by the sequence of
$\eta_1$-geodesic disks in $\Sigma_n$ centered at $p_n$. So we can be sure that $\Sigma_n$
never touches the central geodesic $\gamma_n$ and stays outside of $N_{r_0-\eta_1}$.

We lift $\Sigma_n$ to $M_{R_n}$ endowed with the metric~\eqref{eq:metric}. This
gives us a minimal surface $\widetilde\Sigma_n$ which is doubly periodic and may be non
connected. $\widetilde\Sigma_n$ is doubly periodic by translation in the $(z,\theta)$
parameters by two vectors $v_1^n,v_2^n$. Since $T_{R_n}$ has diameter less than $\delta_0$
we can choose $v_1^n,v_2^n$ of Euclidean length less than $\frac{\delta_0}{\sinh R_n}$. 

The point $p_n$ lifts to some point $\tilde p_n$ whose coordinates can be assumed to be
$(0,0,r_n)$ where $r_n\in(r_0,R_n-\frac12\ln n)$. We can assume that either $r_n$
converges to some $\bar r$ or to $\infty$. In the first case the ambient space around
$(0,0,\bar r)$ is $M_{\infty}=\R^2\times(0,+\infty)$ with the
metric~\eqref{eq:metric}. If $r_n\to
\infty$, we make the following change of coordinates $a=e^{r_n}z$, $b=e^{r_n}\theta$ and $\rho=r-r_n$. So the ambient space is now
$\R^2\times(r_0-\eta_1-r_n,R_n-r_n)$ with the metric
$$
\cosh^2(\rho+r_n)e^{-2r_n}da^2+\sinh^2(\rho+r_n)e^{-2r_n}db^2+d\rho^2
$$
As $n$ goes to $+\infty$, these metrics converge smoothly to $\frac{e^{2\rho}}4
(da^2+ db^2)+d\rho^2$ on $\R^3$. In this model, the vectors $v_1^n,
v_2^n$ become $e^{r_n}v_1^n$ and $e^{r_n}v_2^n$ whose lengths are less that $\frac{\delta_0
e^{r_n}}{\sinh R_n}=O(e^{r_n-R_n})=O(e^{-\frac12\ln n})\to 0$.

Actually, the cases $r_n\to\bar r$ and $r_n\to +\infty$ are very similar. Let us look first at
the case $r_n\to \bar r$. We notice that the metric satisfies the hypotheses of
Lemma~\ref{lem:estimgraph} (Appendix~\ref{sec:appendix}) for some parameter $A$ and for
$r\in[\bar r-\eta_1,R_n]$: we have $x_1=z$, $x_2=\theta$, $x_3=r$ and $h=\sinh$.
So there is a $C$ and a function $u_n$ defined on the Euclidean
disk $\{(z,\theta)\in \R^2|z^2+\theta^2\le 2C^2/\sinh^2 r_n\}$ such that $(z,\theta)\mapsto
(z,u_n(z,\theta),\theta)$ is a parametrization of a neighborhood of $\tilde p_n$ in
$\widetilde\Sigma_n$. Moreover we have $u_n(0,0)=r_n$, $\nabla u_n(0,0)=0$ and the estimates
$$
\|u_n-r_n\|\le A\eps_0\qquad\|\nabla u_n\|\le \sinh r_n \qquad \|\Hess u_n\|\le \frac1C\sinh^2 r_n.
$$
Here $\nabla$ denote the Euclidean gradient operator.

So the sequence $u_n$ is uniformly controlled in the $C^2$ topology and moreover $u_n$
solves the minimal surface equation~\eqref{eq:mse}. Thus, after considering a subsequence, $u_n$ converges
to some $u$ defined on $D_{\bar r}=\{(t,\theta)\in\R^2|t^2+\theta^2\le
\frac{2C^2}{\sinh^2\bar r}\}$ which solves the minimal surface equation.

If $r_n\to +\infty$, we apply the change of variables $a=e^{r_n}t$,
$b=e^{r_n}\theta$ and $\rho=r-r_n$. So we get a new function
$w_n(a,b)=u_n(e^{-r_n} a, e^{-r_n} b)-r_n$
defined on $\{(a,b)\in\R^2|a^2+b^2\le\frac{2C^2e^{2r_n}}{\sinh^2r_n}\}$. As above $w_n$
satisfies the estimates
\begin{align*}
\|w_n\|&\le A\eps_0&\|\nabla w_n\|&\le e^{-r_n}\sinh r_n & \|\Hess w_n\|&\le \frac1C e^{-2r_n}\sinh^2 r_n
\end{align*}
and solves a minimal surface equation~\eqref{eq:mse}. So we can assume it converges to some function $w$
defined on $\Delta=\{(a,b)\in\R^2|a^2+b^2\le 4C^2\}$.

Let us denote the surface $\{r=R\}$ by $P_R$. The surface $\widetilde\Sigma_n$ is doubly
periodic so it is tangent to $P_{r_n}$ at any point of the form $(0,0,r_n)+kv_1^n+lv_2^n$
for $(k,l)\in\Z^2$. Moreover, around these points, it is parametrized locally on
$D_{r_n}+kv_1^n+lv_2^n$ by $(z,\theta) \mapsto(z,u_{n,k,l}(t,\theta),\theta)$ where
$u_{n,k,l}(z,\theta)=u_n((z,\theta)-kv_1^n-lv_2^n)$. The surface $\widetilde \Sigma_n$ is
embedded, this implies that $u_n\le u_{n,k,l}$ or $u_n\ge u_{n,k,l}$ on $D_n\cap
(D_n+kv_1^n+lv_2^n)$ if it is non empty (notice that we can have $u_n\equiv u_{n,k,l}$ on
the intersection).

If $r_n\to \bar r$, let $v_0$ be a vector in $D_{\bar r}$. Since $v_i^n\to0$, there are
sequences $(k_n)_n$ and $(l_n)_n$ such that $k_nv_1^n+l_n v_2^n\to v_0$. As $n\to \infty$,
the sequence of functions $u_{n,k_n,l_n}$ then converges to $u_{v_0}$ on $D_{\bar r}+v_0$
where $u_{v_0}(\cdot)=u(\cdot-v_0)$. Because of $u_n\le
u_{n,k,l}$ or $u_n\ge u_{n,k,l}$, we get $u\le u_{v_0}$ or $u\ge u_{v_0}$ on $D_{\bar
r}\cap(D_{\bar r}+v_0)$.

If $r_n\to \infty$, we can do the same with the change of coordinates since
$e^{r_n}v_i^n\to 0$. So for any $v_0\in\Delta$, we have $w\le w_{v_0}$ or
$w\ge w_{v_0}$ on $\Delta\cap (\Delta+v_0)$ where
$w_{v_0}(\cdot)=w(\cdot+v_0)$.

We now consider the case $r_n\to \bar r$ (the second one is similar). Let $G$ be the
totally geodesic surface in $M_\infty$ tangent to $P_{\bar r}$ at $(0,\bar r,0)$. As
$\widetilde\Sigma$, $G$ can be described as the graph of a function $h$ over $D_{\bar r}$.
We have $h(0)=\bar r$ and there is some $\alpha>0$ such that, over $D_{\bar r}$,
$h(z,\theta)\ge \bar r+\alpha(z^2+\theta^2)$. This second property comes from the fact that
the principal curvatures of $P_{\bar r}$ with respect to $\partial_r$ are $-\tanh \bar r<0$
and $-\coth \bar r<0$. The functions $u$ and $h$ are two solutions of the minimal surface
equation~\eqref{eq:mse} with the same value and the same gradient at the origin. So by Bers theorem, the function $u-h$
looks like a harmonic polynomial of degree at least $2$.

If the degree of the polynomial is $2$, on can find $v_0\in D_{\bar r}\setminus\{(0,0)\}$ such that
$(u-h)(v_0)>0$ and $(u-h)(-v_0)>0$. Then we have
\begin{gather*}
u(v_0)>h(v_0)>h(0,0)=u(0,0)=u_{v_0}(v_0)\\
u_{v_0}(0,0))=u(-v_0)>h(-v_0)>h(0,0)=u(0,0)
\end{gather*}
So this contradicts $u\le u_{v_0}$ or $u\ge u_{v_0}$ on the whole $D_{\bar r}\cap (D_{\bar
r}+v_0)$

If the degree is at least $3$, the growth at the origin of $h$ implies that $u\ge \bar r$
on a smaller disk $D'\subset D_{\bar r}$ and $u>\bar r$ on $D'\setminus\{(0,0)\}$. So if
$v_0\in D'\setminus\{(0,0)\}$ we have
$$
u(v_0)>\bar r=u_{v_0}(v_0) \text{ and }u_{v_0}(0,0)=u(-v_0)>\bar r=f(0,0)
$$
Once again, this contradicts $u\le u_{v_0}$ or $u\ge u_{v_0}$ on the whole $D_{\bar r}\cap
(D_{\bar r}+v_0)$

If $r_n\to \infty$, the same argument can be done with a totally geodesic surface tangent to
the horosphere.
\end{proof}

\subsection{A first area estimate}

The preceding result allows us to estimate the area of a minimal surface with bounded curvature in a tubular end.

\begin{cor}\label{cor:estimarea1}
Let $\delta_0$ and $k_0$ be positive, then there is $\ell_0$ and $\barre R$ such
that the following is
true. Let $\ell\le \ell_0$ and $N_{R_\ell}$ be the hyperbolic tubular neighborhood
of a geodesic loop $\gamma$ of length $\ell$ and such that the diameter of
$S_{R_\ell}$ is less than
$\delta_0$. Let $0<R\le R_\ell-\barre R$ and $\Sigma$ be a compact embedded
minimal surface in $N_{R+1}$
whose curvature is bounded by $k_0$ and $\partial\Sigma\subset S_{R+1}$. Then one of the
following possibilities occurs 
\begin{enumerate}
\item $\Sigma\cap N_R=\emptyset$
\item $\Sigma\cap N_R$ is a finite union of minimal disks. Each of these disks
has boundary curve  homotopic to a parallel of $S_R=\partial N_R$ and $|\Sigma\cap
N_R|\ge 2\pi(\cosh R-1)$.
\end{enumerate}

\end{cor}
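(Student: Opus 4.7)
The plan is to combine Proposition~\ref{prop:trans} with a Morse-theoretic analysis of the radial function and the monotonicity formula for minimal surfaces in $\H^3$.

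I apply Proposition~\ref{prop:trans} with $\eps_0=1$, which provides the constants $\ell_0$ and $\barre R$ of the corollary. For any $p\in \Sigma\cap N_R$ one has $d(p,\partial\Sigma)\ge 1$ (since $\partial\Sigma\subset S_{R+1}$ and $p\in N_R$), hence $d_\Sigma(p,\partial\Sigma)\ge 1$, and Proposition~\ref{prop:trans} ensures that $\Sigma$ is not tangent to any $S_{\bar r}$ at $p$ for $\bar r\in[0,R]$. In particular $\Sigma$ is transverse to $\gamma$ and to $S_R$ inside $N_R$, so $\Sigma\cap S_R$ is a disjoint union of simple closed curves. Assuming $\Sigma\cap N_R$ is nonempty, let $\Sigma'$ be a connected component; it is a compact embedded minimal surface with $\partial\Sigma'\subset S_R$.

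Next I analyse the smooth function $\rho=\cosh r-1$ on $\Sigma'$. A point $p\in\Sigma'$ is critical for $\rho$ exactly when $\partial_r$ is normal to $T_p\Sigma'$: for $r(p)>0$ this means $\Sigma'$ is tangent to $S_{r(p)}$ at $p$, which is excluded by the transversality above, so every critical point of $\rho$ lies on $\gamma$, and there it is a non-degenerate local minimum. If $\Sigma'\cap\gamma=\emptyset$ then $\min_{\Sigma'} r$ must be attained on $\partial\Sigma'$, forcing $r\equiv R$ and $\Sigma'\subset S_R$; this contradicts minimality since $S_R$ has mean curvature $(\tanh R+\coth R)/2\ne 0$. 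Therefore $\Sigma'\cap\gamma\ne\emptyset$, and the negative gradient flow of $\rho$ (inward-pointing along $\partial\Sigma'$ since $\rho$ is maximal and regular there) deformation retracts $\Sigma'$ onto the finite set $\Sigma'\cap\gamma$. Connectedness of $\Sigma'$ then forces $\Sigma'\cap\gamma$ to be a single point and $\Sigma'$ to be contractible, hence a disk. As an embedded disk in the solid torus $N_R$ meeting its core $\gamma$ transversally in exactly one point, $\partial\Sigma'$ is isotopic to the meridian of $\partial N_R$, i.e.\ to a parallel of $S_R$.

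Finally, the area bound comes from the monotonicity formula. Let $p_0$ be the unique point of $\Sigma'\cap\gamma$; since $p_0\in\gamma$ the geodesic ball $B(p_0,R)$ is contained in $N_R$. The monotonicity formula for minimal surfaces in $\H^3$ applied at $p_0$ yields
$$
|\Sigma\cap B(p_0,R)|\ge 2\pi(\cosh R-1),
$$
the area of a totally geodesic disk of radius $R$ in $\H^3$, and therefore $|\Sigma\cap N_R|\ge 2\pi(\cosh R-1)$. The main subtlety is the Morse-theoretic step identifying each component as a disk passing once through the core; once Proposition~\ref{prop:trans} is in hand both the topology and the area estimate follow by standard arguments.
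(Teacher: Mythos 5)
Your proof is correct, and the first two steps (invoking Proposition~\ref{prop:trans} with $\eps_0=1$ and then the Morse-theoretic analysis of $\rho=\cosh r-1$) are essentially the argument the paper uses, just made more explicit: the paper observes that each component of $\Sigma\cap N_R$ meets $\gamma$ transversely, identifies $\Sigma\cap N_\eps$ with a union of small meridian disks for $\eps$ small, and then says the description ``extends by transversality,'' which is precisely your gradient-flow retraction onto the critical set. Your deduction that connectedness forces a single core intersection and hence a disk is the right way to make the ``extends by transversality'' step precise.

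Where you genuinely diverge is the area estimate. The paper projects each disk component onto a meridian (geodesic parallel) disk $\Delta$ via the contraction $(z,\theta,r)\mapsto(z_0,\theta,r)$; since the boundary of the component is homotopic to a parallel, the projection has degree $\pm 1$, so it is surjective, and $|\Sigma\cap N_R|\ge|\Delta|=2\pi(\cosh R-1)$. You instead use the monotonicity formula at the core intersection point. Both give the same constant, so the two routes are interchangeable here. One remark worth making explicit: because $p_0$ lies on $\gamma$, the injectivity radius of $M$ at $p_0$ is only $\ell/2\ll R$, so the monotonicity formula cannot be applied directly in $M$ to the metric ball $B(p_0,R)$ past the cut locus. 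The clean version of your argument lifts the disk $\Sigma'$ to its homeomorphic preimage $\widetilde\Sigma'$ in $V_R\subset\H^3$, notes that the lift $\tilde p_0\in c$ satisfies $B(\tilde p_0,R)\subset V_R$, applies monotonicity in $\H^3$ to get $|\widetilde\Sigma'\cap B(\tilde p_0,R)|\ge 2\pi(\cosh R-1)$, and concludes $|\Sigma'|=|\widetilde\Sigma'|\ge 2\pi(\cosh R-1)$. (The paper's projection also really lives on $V_R$, because the twist $\alpha$ makes the $\theta$-coordinate globally ill-defined on $N_R$, so both proofs implicitly work in the lift.) With that clarification your proposal is a complete and valid proof.
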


A parallel of $S_R$ is a curve $\{z=const.\}$ in the tubular coordinates.

\begin{proof}
Let $\ell_0$ and $\barre R$ be given by Proposition~\ref{prop:trans} for $\delta_0$, $k_0$
and $\eps_0=1$.
Let $\Sigma$ be as in the statement of the corollary and assume $\Sigma\cap
N_R\neq\emptyset$. By Proposition~\ref{prop:trans}, $\Sigma$ is transverse to
the foliation $(S_r)_r$ of $N_R$. So any connected component of $\Sigma\cap
N_R$ intersects the geodesic loop $\gamma$ transversely. This implies that in
$N_\eps$ for $\eps$ small each connected component of $\Sigma\cap N_{\eps}$ is a disk
whose boundary is homotopic to a parallel. Thus this description extends by
transversality to $\Sigma\cap N_R$. Let $\Pi$ be the geodesic projection from
$N_R$ to a geodesic parallel disk $\Delta$ (\textit{i.e.} the map $(z,\theta,r)\mapsto
(z_0,\theta,r)$ for some $z_0$). This map is a contraction mapping and it is surjective on
any disk component of $\Sigma\cap N_R$ since the boundary of such a disk is homotopic to a
parallel. As a consequence the area of such a disk component is at least that of $\Delta$,
\textit{i.e.} $2\pi(\cosh(R)-1)$.
\end{proof}


\section{A maximum principle}
\label{sec:maxprinciples}

One aim of this section is to study some aspect of the behavior of minimal surfaces
in a tubular end. Actually we need to study this in a more general setting. So we consider
the ambient space $C=T\times [a,b]$ endowed with some reference metric $\barre
g=h^2(x_3)d\bar\sigma^2+dx_3^2$ where $d\bar \sigma^2$ is a 
flat metric on the torus $T$.
We consider orthonormal coordinates $(x_1,x_2)$ on $T$ associated to $d\bar
\sigma^2$; so $\bar g=h^2(x_3)(dx_1^2+dx_2^2)+dx_3^2$.

On $C$, we also consider a second metric 
$g=a_{kl}(x_1,x_2,x_3)dx_kdx_l$. For $s\in[a,b]$,
we denote $C_s=T\times[s,b]$
and $T_s=T\times\{s\}$. We are going to make several hypotheses on the metrics
$\bar g$ and $g$. In order to formulate them, we need the following notation:
for $k_1,k_2,k_3,k_4,k_5\in \{1,2,3\}$ and $p\le 5$, we define 
$$
n_p(k_1,\dots,k_p)=\#\{i\in\{1,\dots,p\}|k_i\in\{1,2\}\}.
$$ 
The hypotheses on $\bar g$ and $g$ are: there is $A\ge 1$ such that 
\begin{itemize}
\item[H1] $\frac1{A^2}\bar g\le g\le A^2\bar g$
\item[H2] $\frac{|h'|}h\le A$, $\frac{|h''|}h\le A$ and $\frac{|h'''|}{h}\le A$.
\item[H3] $|a_{kl}|\le A h^{n_2(k,l)}(x_3)$, $|\partial_ia_{kl}|\le A 
h^{n_3(k,l,i)}(x_3)$,
$|\partial_i\partial_j a_{kl}|\le A h^{n_4(k,l,i,j)}(x_3)$ and $|\partial_i\partial_j\partial_m 
a_{kl}|\le 
A h^{n_5(k,l,i,j,m)}(x_3)$.
\item[H4] $h'\le 0$ and the mean curvature vector of $T_s$ with respect to $g$ points in the
$\partial_{x_3}$ direction (this is also true for the metric $\bar g$ since $h'\le 0$)
\end{itemize}

One consequence of H1 and H2 is that the sectional curvatures of $\bar g$ are uniformly
bounded. Actually by $H1$ and $H3$ the sectional curvatures of $g$ are also uniformly
bounded. We also notice that these hypotheses does not depend on the choice of the 
orthonormal coordinates on $(T,d\bar \sigma^2)$.

\subsection{The maximum principle}
We have the following maximum principle for embedded minimal surfaces in $C$ endowed 
with the metric $g$.
\begin{prop}\label{prop:max3}
Let $i_0\in\N$, then there is $h_0$ such the following is true. Assume that 
$h(a)\le h_0$ and let $\Sigma$ be an embedded minimal surface in 
$(C,g)$ whose non empty boundary is inside $T_a$ and its index is less than $i_0$.  Then 
$\Sigma\cap C_{a+1/2}=\emptyset$.
\end{prop}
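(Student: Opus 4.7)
The plan is to argue by contradiction, adapting the transversality argument of Section~\ref{sec:trans} (in particular the proof of Proposition~\ref{prop:trans}) to this more general warped-product ambient. Suppose the statement fails. Then there exists a sequence of ambients $(C^n, \bar g^n, g^n)$ satisfying H1--H4 with a common constant $A$, with warp functions $h^n$ such that $h^n(a)\to 0$, and embedded minimal surfaces $\Sigma^n \subset C^n$ of index at most $i_0$, with $\partial\Sigma^n \subset T_a$ and $\Sigma^n \cap C^n_{a+1/2}\neq\emptyset$.

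By H1 and H3 the sectional curvatures of $g^n$ are uniformly bounded, so the standard theory of minimal surfaces of bounded index applies. This furnishes, for each $n$, a set of at most $i_0$ ``bubble points'' $P^n \subset \Sigma^n$ and a uniform constant $K$ such that, outside a fixed intrinsic neighborhood of $P^n \cup \partial\Sigma^n$, Schoen's curvature estimate for stable minimal surfaces gives $|A^{\Sigma^n}|\le K$. Meanwhile H2 gives $|(\log h^n)'|\le A$, so $h^n(s)\le h^n(a)\,e^{A(b-a)}\to 0$ uniformly on $[a,b]$, and then H1 forces the $g^n$-diameters of all tori $T_s$ to tend to zero.

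Since $\partial\Sigma^n \subset T_a$ and $\Sigma^n$ has points with $x_3\ge a+1/2$, the restriction $x_3|_{\Sigma^n}$ attains an interior maximum at some $p^n$ with $s^n:=x_3(p^n)\ge a+1/2$, where $\Sigma^n$ is tangent to $T_{s^n}$. At the cost of replacing the threshold $a+1/2$ by a slightly larger value in the proof (harmless after adjusting $h_0$), a covering argument using the at most $i_0$ bubbles ensures that one can choose $p^n$ outside the bubble neighborhoods, so that $|A^{\Sigma^n}|(p^n)\le K$. Lift $\Sigma^n$ to the universal cover $\widetilde C^n=\R^2\times[a,b]$, obtaining an embedded minimal surface $\widetilde\Sigma^n$ invariant under the lattice $\Lambda^n$ of deck translations. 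Near a lift $\widetilde p^n$ of $p^n$, the bounded curvature yields a graph representation of $\widetilde\Sigma^n$ over the horizontal tangent plane at $\widetilde p^n$, with uniform $C^2$ estimates. Because the $g^n$-diameter of $T_{s^n}$ tends to zero and $\Lambda^n$ generates $\R^2$, there is $\lambda^n\in\Lambda^n\setminus\{0\}$ with $\widetilde q^n:=\widetilde p^n+\lambda^n$ arbitrarily $g^n$-close to $\widetilde p^n$, and $\widetilde\Sigma^n$ is again tangent to the same horizontal plane $T_{s^n}$ at $\widetilde q^n$.

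Reproducing the geometric argument of Lemma~\ref{lem:prox} in this setting (after a conformal rescaling by $1/h^n(s^n)^2$ which brings the ambient close to a flat Euclidean model), the two graph pieces of $\widetilde\Sigma^n$ around $\widetilde p^n$ and $\widetilde q^n$ are tangent to the same horizontal plane at very close points, so their graph disks must overlap and produce a transversal self-intersection of $\widetilde\Sigma^n$, contradicting the embeddedness of $\Sigma^n$. The main obstacle I anticipate is producing a tangent point with some $T_s$ at depth at least $a+c$ (for a fixed $c>0$) that lies outside the bubble region: the absolute maximum of $x_3|_{\Sigma^n}$ may well be attained at a bubble, so one must exploit that there are only $i_0$ bubbles and that the stable part of $\Sigma^n$ still reaches sufficiently deep, via a careful bubble decomposition combined with a version of Lemma~\ref{lem:prox} applicable in the warped-product setting.
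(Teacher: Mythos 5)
Your approach --- locating a non-bubble tangency point of $\Sigma^n$ with a horizontal torus and running a Lemma~\ref{lem:prox}--type argument --- is not the paper's route and has two genuine gaps. First, the tangency point with controlled curvature is not produced. The canonical interior tangency is the maximum of $x_3|_{\Sigma^n}$, and nothing prevents it from being one of the (at most $i_0$) blow-up points of the bubble decomposition; a surface can have its unique deepest point inside a bubble while being transverse to every $T_s$ elsewhere. Your ``covering argument'' controls the \emph{number} of bubbles, not their \emph{location}, and points of $\Sigma^n$ at non-maximal depth are in general not tangent to any $T_s$, so shrinking $h_0$ or moving the threshold gains nothing. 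Second, even granting two nearby tangency points $\widetilde p^n,\widetilde q^n$ on the same $T_{s^n}$, the two graph pieces there are tangent to the \emph{same} horizontal plane after rescaling, so ``transversal self-intersection'' does not follow: they could be nested or identical. Lemma~\ref{lem:prox} gets transversality precisely because the leaves there are round cylinders $B_r$ and the tangent planes at $p_1$ and $p_2$ meet at an angle in $[\pi/3,2\pi/3]$; with flat leaves $T_s$ the two tangent planes are parallel, and ruling out tangency requires the ordering-and-Bers-theorem comparison with the totally geodesic graph used in the second half of the proof of Proposition~\ref{prop:trans}, not Lemma~\ref{lem:prox}.

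The paper's proof avoids both difficulties by blowing up at the deepest point $p_n$ of $S_n$ \emph{whether or not it is a bubble}: set $s_n=\max_{S_n}x_3$, $\lambda_n=h_n(s_n)^{-1}$, change $y_3=\lambda_n(x_3-s_n)$ and dilate the metric by $\lambda_n^2$. Hypotheses H2--H3 then force the rescaled reference and ambient metrics to converge in $C^{2,\alpha}$ to a flat metric on $T^2\times\R_-$, the boundary recedes to $y_3\to-\infty$, and the rescaled surfaces, still of index $<i_0$, subconverge to a minimal \emph{lamination} touching $T_0$; the contradiction is extracted from the structure of such laminations as in the proof of Proposition~1 in \cite{CoHaMaRo2}. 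Because one passes to a lamination limit, no pointwise curvature bound at the deepest point is needed --- which is exactly the control your proposal cannot supply.
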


We notice that $h_0$ will depend on $i_0$, $A$ and the metric $d\bar \sigma^2$. We 
also notice that this control on $h$ is actually a control on the size of the torus $T_a$.

\begin{proof}
If the proposition is not true there is a sequence of function $h_n$ with $h_n(a)\to 0$ 
and a minimal surface 
in $S_n\in (C,g_n)$ ($g_n=a_{n,kl}(x_1,x_2,x_3)dx_kdx_l$)
such that $\partial S_n\subset T_a$, its index is less than $i_0$ and $\Sigma\cap 
C_{a+1/2}\neq \emptyset$.

Let $s_n$ be the maximum of the $x_3$ coordinate on $S_n$, $x_3\ge a+1/2$.
Let us define $\lambda_n=(h_n(s_n))^{-1}$. Then we change the  coordinates by  
$y_1=x_1$, $y_2=x_2$ and $y_3=\lambda_n(x_3-s_n)$ and blow up the metric by a factor 
$\lambda_n$. 
This gives us a minimal surface $\Sigma_n$ in 
$T\times[\lambda_n(a-s_n),0]$ that touches $T_0$ and with boundary in 
$T_{\lambda_n(a-s_n)}$ (we notice that $\lambda_n(a-s_n)\to -\infty$). The ambient 
metric is then $\tilde g_n=b_{n,kl}(y_1,y_2,y_3)dy_kdy_l$ where 
\[
b_{n,kl}(y_1,y_2,y_3)=a_{n,kl}(y_1,y_2,y_3/\lambda_n+s_n)\lambda_n^{n_2(k,l)}
\]
The reference metric becomes 
\[
\frac{h_n^2(y_3/\lambda_n+s_n)}{h_n^2(s_n)}(dy_1^2+dy_2^2)+dy_3^2
\]
Because of the hypothesis $H2$, considering a subsequence, this metric converges to 
the flat metric $d\bar\sigma^2+dy_3^2$ in $C^{2,\alpha}$ topology. Because of H3, 
considering a new subsequence, the metrics $\tilde g_n$  converges to a flat metric 
$\bar h=\bar b_{kl}dy_kdy_l$ in $C^{2,\alpha}$ topology. For example we have
\[
\partial_ib_{n,kl}=\partial_i a_{n,kl}(y_1,y_2,y_3/\lambda_n+s_n)\lambda_n^{n_{k,l,i}-1}
\]
So
\[
|\partial_ib_{n,kl}|\le A\frac{h_n^{n_3(k,l,i)}(y_3/\lambda_n+s_n)} 
{h_n^{n_3(k,l,i)}(s_n)}h_n(s_n)\to 0
\]

Once this is known, the arguments in order to conclude use the fact that $\Sigma_n$ 
converges to a minimal lamination in $T^2\times \R_-$ endowed with the flat metric $\bar 
h$ : the precise argument can be found in the proof of Proposition~1 in 
\cite{CoHaMaRo2}.
\end{proof}
\begin{remarq}
We notice that $h_0$ can be chosen uniformly if $d\bar\sigma^2$ lies in a compact 
subset of flat metrics on $T$. 
\end{remarq}


\subsection{Some applications}

In this section, we will see some consequences of the above result.

The case of cusp ends $E=T\times\R_+$ endowed with $\bar g=e^{-2x_3} d\bar \sigma^2+dx_3^2$ is the
simplest one. Indeed in this case the metric $g$ is the reference metric $\bar g$.
Let $L$ be fixed and replace $x_3\mapsto e^{-x_3}$ by some non increasing function $h$
such that $h(x_3)=e^{-x_3}$ on $[0,L]$ and $h(x_3)= e^{-(L+1/2)}$ for $x_3>L+1$. Then all the
above hypotheses $H_2,H_4$ are satisfied (the constant $A$ can be chosen independently of $L$). So
Proposition~\ref{prop:max3} yields: if $\partial E$ has small diameter, then no compact
embedded minimal surface with boundary inside $\partial E$  and index less 
than $1$ can enter in $E_{1/2}$. As a
consequence, in a cusp manifold $M$, there is $\eps>0$ such that any compact embedded
minimal surface in $M$ less than $1$ is contained in $M_{[\eps,\infty)}$.

The second case of interest concerns the tubular ends. 

\begin{prop}\label{prop:estimarea}
Let $K$ be a compact set of flat tori $T$. Then there are $\ell_0$ and $\barre R$ such 
the following is
true. Let $\ell\le \ell_0$ and $N_{R_\ell}$ be a hyperbolic tubular neighborhood of a
geodesic loop of length $\ell$ such that $S_{R_\ell}$ belongs in $K$. Let $0<R\le 
R_\ell-\barre R$ and $\Sigma$ be a compact embedded minimal
surface in $N_{R+1}$ with $\partial\Sigma\in S_{R+1}$ with index less than $1$. Then one 
of the following possibilities occurs
\begin{enumerate}
\item $\Sigma\cap N_R=\emptyset$
\item $\Sigma\cap N_1\neq\emptyset$
\end{enumerate}
Moreover there is a universal constant $\kappa$ such that, in the second case and for any
$3\le R\le R_\ell-\barre R$,
$|\Sigma\cap N_R|\ge \kappa \s_0 e^{R-R_\ell}$ where $\s_0\le 1$ is a lower bound on the
systole of $T_{R_\ell}$.
\end{prop}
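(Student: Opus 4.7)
The plan is to base both conclusions on Proposition~\ref{prop:max3}. First, I would check that the tubular end $N_{R_\ell}$ fits the framework of that proposition: in coordinates $(x_1,x_2,x_3)=(\theta,z,R_\ell-r)$, choosing reference function $h(x_3)=\cosh(R_\ell-x_3)$, hypotheses H1--H4 are satisfied uniformly on $\{r\ge 1\}$ with a constant $A$ depending only on $K$ (H4 holds because $S_r$ has mean curvature vector pointing toward the core, and H1 is where the restriction $r\ge 1$ is needed since $\sinh r$ and $\cosh r$ become non-comparable at small $r$).

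For the dichotomy, suppose for contradiction that $\Sigma\cap N_R\ne\emptyset$ and $\Sigma\cap N_1=\emptyset$. Then $r|_\Sigma$ attains its minimum at some $r_*\in(1,R]$ and $\Sigma$ is tangent to $S_{r_*}$ at a point $p_*$ from the side $r\ge r_*$. I would adapt the blow-up/lamination argument in the proof of Proposition~\ref{prop:max3} (following Proposition~1 of \cite{CoHaMaRo2}): rescaling the ambient metric at $p_*$ by $1/h(x_3(p_*))$ and invoking the index bound, one extracts a subsequence converging to a minimal lamination in a flat cylinder $T\times\R$ tangent to a flat leaf. As at the end of the proof of Proposition~\ref{prop:trans}, such a tangency contradicts the comparison between a minimal surface and the positively mean-curved $S_{r_*}$; this proves the dichotomy, with $\barre R$ and $\ell_0$ coming from the uniformity of $h_0$ across the compact set $K$.

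For the area estimate in case~(2), the same blow-up/lamination argument supplies a curvature bound on $\Sigma$ away from a bounded number of points (because the index is $\le 1$), so Proposition~\ref{prop:trans} applies and gives transversality of $\Sigma$ to every $S_r$ with $r\le R_\ell-\barre R$. Since a component of $\Sigma$ meeting $N_1$ and having boundary in $S_{R+1}$ runs from $S_{R+1}$ into $N_1$, for almost every $r\in[1,R]$ the intersection $\Sigma\cap S_r$ contains a closed curve non-contractible in $S_r$ (otherwise this component could be capped off inside $N_r$, contradicting the fact that it meets both sides of $S_r$). Its length is at least $\operatorname{sys}(S_r)\ge\kappa_1\s_0 e^{r-R_\ell}$, by comparing $S_r$ with $S_{R_\ell}\in K$ via the scaling factors $\sinh r/\sinh R_\ell$ and $\cosh r/\cosh R_\ell$. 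Since $|\nabla^\Sigma r|\le 1$, the coarea formula then yields
\[
|\Sigma\cap N_R|\;\ge\;\int_1^R\operatorname{length}(\Sigma\cap S_r)\,dr\;\ge\;\kappa_1\s_0\int_1^R e^{r-R_\ell}\,dr\;\ge\;\kappa\,\s_0\,e^{R-R_\ell}
\]
for $R\ge 3$.

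The main obstacle is extracting enough regularity from the hypothesis that the index is at most $1$: Proposition~\ref{prop:trans} requires an a priori curvature bound that we do not have, and the whole argument hinges on running the Colding--Minicozzi-style lamination machinery in the hyperbolic tubular setting as in \cite{CoHaMaRo2}, so that curvature blow-up can only occur at a bounded number of points. A secondary subtlety is the topological argument that $\Sigma\cap S_r$ contains a non-contractible loop in $S_r$; this uses embeddedness of $\Sigma$ and the connectivity of the component of $\Sigma$ that crosses the whole tube from $S_{R+1}$ to $N_1$.
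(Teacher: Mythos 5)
Your dichotomy argument and the paper's both rest on Proposition~\ref{prop:max3}, but your normalization $h(x_3)=\cosh(R_\ell-x_3)$ cannot be fed into it directly: that proposition assumes $h(a)\le h_0$ for a \emph{small} constant $h_0$, while $\cosh\ge 1$ always; and the remark that $h_0$ may be chosen uniformly requires the reference torus $(T,d\bar\sigma^2)$ to range over a compact family, which fails for the raw $(\theta,z)$-torus as $\ell\to 0$. The paper cures both at once by rescaling the horizontal coordinates by $\sinh R_\ell$ and taking $h(x_3)=\sinh(R_\ell-x_3)/\sinh R_\ell$: then $h(0)=1$, the reference torus is comparable to $S_{R_\ell}\in K$, and $h(R_\ell-(R+1))\le h_0$ is exactly the condition $R\le R_\ell-\barre R$. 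With that normalization one applies Proposition~\ref{prop:max3} as a black box; there is no need to re-run the lamination machinery, and the tangency/Bers-comparison endgame you describe is the ingredient of Proposition~\ref{prop:trans}, not of Proposition~\ref{prop:max3}.

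For the area estimate your route is genuinely different from the paper's and, as you acknowledge, incomplete at its central step. You need a curvature bound away from a controlled number of points, then transversality from Proposition~\ref{prop:trans} (which requires a global curvature bound and is silent at the exceptional points), and then a topological claim. The topological step as phrased is also not quite right: a disk-shaped ``finger'' reaching from $S_{R+1}$ into $N_1$ crosses every intermediate slice yet produces only contractible curves in each $S_r$; what rules this out is transversality to $S_r$ for all $r$ down to $r=0$, forcing $\Sigma$ to cross $\gamma$ (this is exactly the content of Corollary~\ref{cor:estimarea1} and it already uses the curvature bound). The paper avoids all of this. It only uses that $\Sigma$ meets $S_r$ for every $r\in[1,R+1]$ (clear since $\Sigma\cap N_1\ne\emptyset$, $\partial\Sigma\subset S_{R+1}$, and $N_{R+1}$ is mean convex so carries no closed minimal surface), places in each shell $N_\rho\setminus N_{\rho-2a}$ an embedded geodesic ball of radius $a\sim\s_0\sinh\rho/\cosh R_\ell$ centered on a point of $\Sigma$, gets $\pi a^2$ of area there by the monotonicity formula in $\H^3$, and sums these contributions against a comparison cylinder over a systole of $S_{R_\ell}$. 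That argument needs no curvature estimate, no transversality, and no control on the topology of the slices, which is precisely why it works under a mere index bound; your coarea/systole computation is correct as arithmetic but is built on the regularity hypotheses that the problem does not provide.
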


\begin{proof}
We first prove that $\Sigma\cap N_R=\emptyset$ or $\Sigma\cap N_1\neq \emptyset$. We have
seen in Section~\ref{sec:cutuend} that we can consider, on $N_{R_\ell}$, a coordinate system $C=T\times [0,R_\ell)$
endowed with the metric $g=\sinh^2(R_\ell-x_3)dx_1^2+\cosh^2(R_\ell-x_3)dx_2^2+dx_3^2$
($(x_1,x_2)$ are orthonormal coordinates on $T$).

In order to fit with the notations of the preceding section we should introduce the 
coordinates $y_i=\sinh(R_\ell)x_i$ $i=1,2$ and $y_3=x_3$. The first two are orthonormal 
coordinates on  $(T,d\bar\sigma^2)=\sinh(R_\ell)^2(dx_1^2+dx_2^2)$. So we define 
$\bar 
g 
=h^2(x_3)(dy_1^2+dy_2^2)+dy_3^2$ with $h(x)=\sinh(R_\ell-x)/\sinh(R_\ell)$ and in these 
coordinates the metric $g$ can be written
$$
\frac1{\sinh^2(R_\ell)}(\sinh^2(R_\ell-y_3)dy_1^2+\cosh^2(R_\ell-y_3)dy_2^2)+dy_3^2
$$
 
There is 
$A>0$ (that does not depend on $\ell$) such that
$g$ and $\bar g$ satisfy hypotheses H1, H2, H3 and H4 on $T\times[0,R_\ell-\frac12)$. 
Moreover we notice that, since $S_{R_\ell}$ belongs to $K$,  $(T,d\bar\sigma^2)$ 
belongs to a compact set of flat tori. 


Let $h_0$ be given by Proposition~\ref{prop:max3} and let $\barre R$
be such that $\sinh(R_\ell-\barre R)\le h_0\sinh(R_\ell)$. Consider $0<R\le R_\ell-\barre 
R$
and let $\Sigma$ be an embedded minimal surface in $N_{R+1}\setminus N_{\frac12}$ 
with $\partial\Sigma\in 
S_{R+1}$ and index less than $1$.
If $\Sigma\cap N_1=\emptyset$, $\Sigma$ can be seen as a minimal surface in $(C, g)$
with boundary in $T_s$ where $s=R_\ell-(R+1)$. Since $h(s)\le h_0$, 
Proposition~\ref{prop:max3} gives $\Sigma\cap
C_{s+1/2}=\emptyset$. So in the tubular coordinates, we have $\Sigma\cap N_R=\emptyset$.

In the second case we now prove the area estimate. For this we use the tubular
coordinates. We notice that $\Sigma$ must meet all the tori $S_r$ for $1\le r\le R+1$.

Since $g\le \cosh^2 r(dz^2+d\theta^2)+dr^2$ and the systole of $T_{R_L}$ is at least
$\s_0$, the disk $\{(z+z_0,\theta+\theta_0,R_\ell) |\,z^2+\theta^2\le \frac{\s_0^2}{4\cosh^2
R_\ell}\}$ is embedded in $S_{R_\ell}$ for any
$t_0,\theta_0$. For any $\rho\in [3/2,R_\ell]$, let us define $a=\frac{\sinh \rho}{\cosh R_\ell}\frac{\s_0}4\le
\frac14$. The cylinder $Y_\rho=\{(z+z_0,\theta+\theta_0,r)| r\in [\rho-2a,\rho]\textrm{ and
}z^2+\theta^2\le \frac{\s_0^2}{4\cosh^2 R_\ell}\}$ is embedded in $N_\rho$. $Y_\rho$
contains the geodesic ball of center $(z_0,\theta_0,\rho-a)$ and radius $a$ which is then
embedded in $N_\rho$. Indeed, in the cylinder, we have
$$
g\ge \sinh^2(\rho-2a)(dz^2+d\theta^2)+dr^2\ge \frac14\sinh^2 \rho(dz^2+d\theta^2)+dr^2
$$
So the geodesic ball is contained in $\{(z+z_0,\theta+\theta_0,r+\rho-a)| \frac14\sinh^2
\rho(z^2+\theta^2)+r^2\le a^2\}$ which is a subset of $Y_\rho$.

Since $\Sigma$ meets any $S_r$ for $r\ge 1$, for any $\rho$ we can select $z_0,\theta_0$
such that $(z_0,\theta_0,\rho-a)\subset \Sigma$. So by the monotonicity formula in $\H^3$,
$|\Sigma\cap Y_\rho|\ge \pi a^2$. We are going to sum over all these contributions to
estimate the area of $\Sigma$.

Let $c(s)=(z(s),\theta(s),R_\ell)$ be a parametrization of a systole of $S_{R_\ell}$ and
consider the surface $S$ in $N_{R_\ell}$ parametrized by $X:(s,r)\in \S^1\times[1,R_\ell]\mapsto
(z(s),\theta(s),r)$. So, for $\rho_1<\rho_2$, we can estimate
\begin{align*}
|S\cap(N_{\rho_2}\setminus N_{\rho_1})|&\le
\int_{\rho_1}^{\rho_2}\int_{\S^1}(\cosh^2(r)z'^2(s)+\sinh^2(r)\theta'^2(s))^{1/2}drds\\
&\le\int_{\rho_1}^{\rho_2}\int_{\S^1}\frac{\cosh r}{\sinh R_\ell}(\cosh^2 (R_\ell)
z'^2(s)+\sinh^2(R_\ell)\theta'(s))^{1/2}drds\\
&\le \frac{\s_0}{\sinh R_\ell}(\sinh \rho_2-\sinh\rho_1)\\
&\le \frac{2\s_0}{\sinh R_\ell}\cosh\frac{\rho_1+\rho_2}2\sinh\frac{\rho_2-\rho_1}2
\end{align*}
So in $N_\rho\setminus N_{\rho-2a}$
\begin{align*}
|S\cap (N_\rho\setminus N_{\rho-2a})|&\le \frac{2\s_0}{\sinh R_\ell}\cosh(\rho-a)\sinh a\\
&\le \frac{2\kappa \s_0}{\sinh R_\ell}\cosh(\rho) a\\
&\le \frac{2\kappa' \s_0}{\cosh R_\ell}\sinh(\rho)a\\
&\le 8\kappa'a^2\le \frac{8\kappa'}\pi|\Sigma\cap Y_\rho|\le \frac{8\kappa'}\pi|\Sigma\cap (N_\rho\setminus N_{\rho-2a})|
\end{align*}
for some universal constant $\kappa$ and $\kappa'$.

So considering a disjoint union of $N_\rho\setminus N_{\rho-2a}$ in $N_R\setminus N_1$ that
covers $N_R\setminus N_{3/2}$, we obtain
\begin{align*}
|\Sigma \cap N_R|&\ge \frac\pi{8\kappa'}|S\cap (N_R\setminus N_{3/2})|\\
&\ge\frac\pi{8\kappa'}\int_{3/2}^R\int_{\S^1}(\cosh^2(r)y'^2(s)+\sinh^2(r)\theta'^2(s))^{1/2}drds\\
&\ge\frac\pi{8\kappa''} \frac{\s_0}{\cosh R_\ell}(\cosh R-\cosh 3/2)\ge \kappa''' \s_0
e^{R-R_\ell}\\
\end{align*}
for any $R\ge 3$ and some universal constant $\kappa'''$.
\end{proof}


\section{The min-max theory}\label{sec:minmax2}

In this section we recall some definitions and results of the min-max theory for minimal
surfaces. There are basically two settings for this theory: the discrete and the
continuous one. We recall the main points that we will use in the sequel.


\subsection{The discrete setting}
The discrete setting for the min-max theory was developed by Almgren and Pitts (see
\cite{Alm,Pit}).

Let $M$ be a compact orientable $3$-manifold with no boundary. The Almgren-Pitts min-max theory
deals with discrete families of elements in $\boZ_2(M)$ \textit{i.e.} integral rectifiable
$2$-currents in $M$ with no boundary.

If $I=[0,1]$, we define some cell complex structure on $I$ and $I^2$.
\begin{defn}
Let $j$ be an integer. $I(1,j)$ is the cell complex on $I$ whose $0$-cells are points
$[\frac i{3^j}]$ and $1$-cells are intervals $[\frac i{3^j},\frac{i+1}{3^j}]$.

The cell complex $I(2,j)$ on $I^2$ is $I(2,j)=I(1,j)\otimes I(1,j)$.
\end{defn}

For these cell complexes we can associate some notations
\begin{itemize}
\item $I(m,j)_0$ denotes the set of $0$-cells of $I(m,j)$.
\item $I_0(1,j)$ denotes the set of $0$-cells ${[0],[1]}$.
\item The distance between two elements of $I(m,j)_0$ is 
$$
\bfd: I(m,j)_0\times I(m,j)_0\to \N\ ;\ (x,y)\mapsto 3^j\sum_{i=1}^m|x_i-y_i|
$$
\item The projection map $n(i,j) : I(m,i)_0\to I(m,j)_0$ is defined by $n(i,j)(x)$
is the unique element in $I(m,j)_0$ such that 
$$\bfd(x,n(i,j)(x))=\inf\{\bfd(x,y), y\in I(1,j)_0\}.$$
\end{itemize}

Let $\phi:I(m,j)_0\to \boZ_2(M)$ be a map. The fineness of $\phi$ is defined by
$$
\bff(\phi)=\sup\left\{\frac{\MM(\phi(x)-\phi(y))}{\bfd(x,y)},x,y\in I(m,j)_0 \text{ and
}x\neq y\right\}
$$
where $\MM$ is the mass of a current.

We write $\phi :I(1,j)_0\to (\boZ_2(M),\{0\})$ to mean $\phi(I(1,j)_0)\subset
\boZ_2(M)$ and $\phi(I_0(1,j))=\{0\}$.

\begin{defn}
Let $\delta$ be positive and $\phi_i: I(1,k_i)_0\to (\boZ_2(M),\{0\})$ for $i=1,2$.
$\phi_1$ and $\phi_2$ are $1$-homotopic in $(\boZ_2(M),\{0\})$ with fineness $\delta$ if
there is $k_3\in \N$, $\max(k_1,k_2)\le k_3$ and a map
$$
\psi : I(2,k_3)_0\to \boZ_2(M)
$$
such that 
\begin{itemize}
\item $f(\psi)\le \delta$;
\item $\psi([i-1],x)=\phi_i(n(k_3,k_i)(x))$ for $x\in I(1,k_3)_0$;
\item $\psi(I(1,k_3)_0\times\{[0],[1]\})=0$.
\end{itemize}
\end{defn}

The main objects in the discrete min-max theory are the $(1,\MM)$-homotopy sequences.

\begin{defn}
A $(1,\MM)$-homotopy sequence of maps into $(\boZ_2(M),\{0\})$ is a sequence of maps $\{\phi_i\}_{i\in\N}$,
$$
\phi_i:I(1,k_i)_0\to (\boZ_2(M),\{0\}),
$$
such that $\phi_i$ is $1$-homotopic to $\phi_{i+1}$ in $(\boZ_2(M),\{0\})$ with fineness
$\delta_i$ and
\begin{itemize}
\item $\lim_{i\to \infty}\delta_i=0$;
\item $\sup_i\{\MM(\phi_i(x)), x\in I(1,k_i)_0\}<+\infty$.
\end{itemize}
\end{defn}

Moreover we have a notion of discrete homotopy between $(1,\MM)$-homotopy sequences
\begin{defn}
Let $S_j=\{\phi_i^j\}_{i\in\N}$ ($j=1,2$) be two $(1,\MM)$-homotopy sequences of maps into
$(\boZ_2(M),\{0\})$. $S_1$ is homotopic to $S_2$ if there is a sequence
$\{\delta_i\}_{i\in\N}$ such that
\begin{itemize}
\item $\lim \delta_i=0$;
\item $\phi_i^1$ is $1$-homotopic to $\phi_i^2$ in $(\boZ_2(M),0)$ with fineness $\delta_i$.
\end{itemize}
\end{defn}

This notion defines an equivalence relation between $(1,\MM)$-homotopy sequences. The set
of equivalence classes is denoted by $\pi_1^\#(\boZ_2(M),\MM,\{0\})$. The Almgren-Pitts
theory says that $\pi_1^\#(\boZ_2(M),\MM,\{0\})$ is naturally isomorphic to the homology
group $H_3(M,\Z)$ (see Theorem 4.6 in \cite{Pit} and \cite{Alm}). We denote by $\Pi_M$ the
element of $\pi_1^\#(\boZ_2(M),\MM,\{0\})$ that corresponds to the fundamental class in
$H_3(M)$. If $S\in \Pi_M$ we say that $S$ is a discrete sweep-out of $M$.

For $S=\{\phi_i\}_i$ a $(1,\MM)$-homotopy sequence we define 
$$
\LL(S)=\limsup_{i\to\infty}\max\{\MM(\phi_i(x)),x\in I(1,k_i)_0\}
$$
If $\Pi\in \pi_1^\#(\boZ_2(M),\MM,\{0\})$ is an equivalence class, we define the width associated to $\Pi$ by 
$$
W(\Pi)=\inf\{\LL(S),S\in \Pi\}
$$
For $\Pi=\Pi_M$, the number $W_M=W(\Pi_M)$ is call the width of the manifold $M$. The
Almgren-Pitts theory says that this number is positive and is $\LL(S)$ for some particular
$S\in \Pi_M$. If $S=\{\phi_i\}_i$ we say that ${\phi_{i_j}(x_j)}_j$ ($x_j\in
I(1,k_{i_j})$) is a min-max sequence if $\MM(\phi_{i_j}(x_j))\to W_M$. 

\begin{thm}[Pitts \cite{Pit}]\label{th:minmax}
Let $M$ be a closed $3$-manifold, then there is $S=\{\phi_i\}_i\in \Pi_M$ with
$\LL(S)=W_M$ and a min-max sequence $\{\phi_{i_j}(x_j)\}_j$ that converges (in the varifold
sense) to an integral varifold whose support is a finite collection of embedded connected
disjoint minimal surfaces $\{S_i\}_i$ of $M$. So there are positive numbers $\{n_i\}_i$
such that 
$$
W_M=\sum_{i=1}^pn_i|S_i|
$$
\end{thm}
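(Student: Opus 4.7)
The plan is to establish the theorem through the three standard ingredients of Pitts' construction: the \emph{pull-tight} procedure, extraction of an \emph{almost-minimizing} min-max sequence, and finally a regularity argument. Start from any sequence $S_n=\{\phi^n_i\}_i\in\Pi_M$ with $\LL(S_n)\to W_M$; a standard diagonal argument produces a single $S=\{\phi_i\}_i\in\Pi_M$ with $\LL(S)=W_M$, so throughout we may work with a fixed discrete sweep-out whose mass approaches $W_M$ along suitable points.

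First I would apply the pull-tight deformation. Using the first variation of mass on the space of integral $2$-currents, one constructs (away from the stationary varifolds) a continuous vector field on the space of currents that strictly decreases mass. Discretizing its time-one flow carefully to preserve the homotopy class in $\pi_1^\#(\boZ_2(M),\MM,\{0\})$ and to keep the fineness under control, one replaces $S$ by $S'\in\Pi_M$ with $\LL(S')=W_M$ whose every min-max subsequence $\{\phi_{i_j}(x_j)\}_j$ with $\MM(\phi_{i_j}(x_j))\to W_M$ converges (up to a subsequence, in the varifold sense) to a stationary integral varifold $V$.

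The combinatorial heart of Pitts' argument is then applied to $S'$: from it one extracts a subsequence, still denoted $\{\phi_{i_j}(x_j)\}_j$, such that the limit varifold $V$ is \emph{almost-minimizing in small annuli} at every point of its support. Concretely, for each $p\in\mathrm{supp}(V)$ and almost every pair $0<r<R$, no deformation in $B_R(p)\setminus B_r(p)$ can decrease the mass of a representative close to $V$ by more than an arbitrarily small amount without first increasing it substantially. This is obtained through a pigeonhole-type argument on the discrete homotopies $\psi$ that realize the $1$-homotopies between $\phi_{i}$ and $\phi_{i+1}$, exploiting the fact that $\LL(S')=W_M$ is the infimum and therefore no such homotopy can skirt around $V$ with uniformly smaller mass.

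The final step, which I expect to be the main obstacle, is regularity: a stationary integral varifold that is almost-minimizing in annuli must be supported on a finite disjoint union of smooth closed embedded minimal surfaces $\{S_i\}$ with integer multiplicities $n_i$. The strategy is to construct, in each annulus, a \emph{replacement} (a stable stationary representative obtained by minimization), show the replacements glue to produce tangent varifolds that are planes, and then apply Allard's regularity theorem to conclude smoothness; embeddedness and pairwise disjointness follow from applying the maximum principle to the replacements. Writing $V=\sum_i n_i|S_i|$ then gives $W_M=\sum_i n_i|S_i|$, completing the proof. The delicate geometric-measure-theoretic analysis of the replacements (continuity, stability, and the derivation of uniform curvature estimates) is where essentially all the work of Pitts' monograph is concentrated.
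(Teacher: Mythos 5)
The paper does not prove this statement; it is quoted verbatim as a theorem of Pitts with a citation to \cite{Pit}, and no argument is given. Your sketch is a correct high-level outline of the standard Almgren--Pitts proof (tightening, extraction of an almost-minimizing min-max sequence via the combinatorial argument on discrete homotopies, and regularity through stable replacements, tangent-cone analysis, Allard's theorem, and the maximum principle for embeddedness and disjointness), so there is nothing to reconcile with the paper beyond noting that the authors defer entirely to Pitts' monograph rather than reproducing any part of the proof.
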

A consequence of this result is that there is always a minimal surface $S$ in $M$ such
that $|S|\le W_M$. Actually, Zhou~\cite{Zho} proved that, if
$S_i$ is a non orientable minimal surface produced by the above theorem, then $n_i$ is
even.


\subsection{The continuous setting}

The continuous setting was developed by Colding and De Lellis \cite{CoDeL}. Here we present it
with the modifications made by Song in~\cite{Son}.

Let $M$ be Riemannian $3$-manifold and consider $N\subset M$ a bounded open subset
whose boundary $\partial N$, when non empty, is a rectifiable surface of finite
$\boH^2$-measure. Moreover when $\partial N\neq\emptyset$, we assume that each connected component $C$
of
$\partial N$ separates $M$.

If $a<b\in \R$, we then have the following definitions.

\begin{defn}
A family of $\boH^2$-measurable closed subsets $\{\Gamma_t\}_{t\in[a,b]}$ in $N\cup\partial
N$ with finite $\boH^2$-measure is called a generalized smooth family if
\begin{itemize}
\item for each $t$ there is a finite set $P_t\in N$ such that $\Gamma_t\cap N$ is a smooth
surface in $N\setminus P_t$ or the empty set;
\item $\boH^2(\Gamma_t)$ depends continuously in $t$ and $t\mapsto \Gamma_t$ is continuous
in the Hausdorff sense;
\item on any $U\subset\subset N\setminus P_{t_0}$, $\Gamma_t\xrightarrow[]{t\to t_0}
\Gamma_{t_0}$ smoothly in $U$.
\end{itemize}
\end{defn}

We notice the smoothness hypothesis is only made on $\Gamma_t\cap N$ so this allows
$\partial N$ to be non smooth. We now define the notion of continuous sweep-out in this
setting.

\begin{defn}
If $\partial N=\emptyset$, a generalized smooth family $\{\Gamma_t\}_{t\in[a,b]}$ is
called a continuous sweep-out of $N$ if there exists a family of open subsets
$\{\Ome_t\}_{t\in[a,b]}$ of $N$ such that 
\begin{itemize}
\item[(sw1)] $(\Gamma_t\setminus \partial\Ome_t)\subset P_t$ for any $t$;
\item[(sw2)] $\boH^3(\Ome_t\triangle \Ome_s)\to 0$ as $t\to s$ (where $\triangle$
denotes the symmetric difference of subsets).
\item[(sw3)] $\Ome_a=\emptyset$ and $\Ome_b=N$;
\end{itemize}

If $\partial N\neq \emptyset$, for a open subset $\Ome\subset N$ we denote
$\partial_*\Ome=\partial\Ome\cap N$. A continuous sweep-out of $N$ is then required to satisfy
(sw1) and (sw2) above except that $\partial$ is replaced by $\partial_*$ and $t>a$ in {(sw1)}. Moreover (sw3) is replaced by
\begin{itemize}
\item[(sw3')] $\Ome_a=\emptyset$, $\Ome_b=N$, $\Sigma_a=\partial N$ and $\Sigma_t\subset
N$ for $t>a$.
\end{itemize}
\end{defn}
For a continous sweep-out as above $\{\Gamma_t\}_{t\in[a,b]}$, we define the quantity
$\LL(\{\Gamma_t\})=\max_{t\in[a,b]}\boH^2(\Gamma_t)$. When $\partial N$ is a smooth
surface, constructing a continuous sweep-out can be done in the following way. Let $f:N\to
[0,1]$ be a Morse function such that $\{f^{-1}(0)\}=\partial N$. Then if
$\Gamma_t=f^{-1}(t)$ for $t\in [0,1]$, $\{\Gamma_t\}_{t\in[0,1]}$ is a sweep-out of $N$.

Two continuous sweep-outs $\{\Gamma_t^1\}_{t\in[a,b]}$ and $\{\Gamma_t^2\}_{t\in[a,b]}$ are said to
be homotopic if, informally, they can be continuously deformed one to the other (the precise
definition is Definition~8 in~\cite{Son}). Then a family $\Lambda$ of sweep-outs is called
homotopically closed if it contains the homotopy class of each of its elements. For such a
family $\Lambda$, we can define the width
associated to $\Lambda$ as
$$
W(N,\partial N,\Lambda)= \inf_{\{\Gamma_t\}\in\Lambda} \LL(\{\Gamma_t\})
$$

As in the discrete setting this number can be realized as the mass of some varifold supported by
smooth disjoint minimal surfaces (see Theorem~12 in~\cite{Son}).


\subsection{From continous to discrete}
In order to construct discrete sweep-outs of a closed orientable $3$-manifold $M$, we will use a result obtained by
Zhou (see Theorem~5.1 in \cite{Zho2}). We denote by $\boC(M)$ the space of subsets in $M$
with finite perimeter. Let $\boF$ denote the flat metric on $\boZ_2(M)$.

\begin{thm}\label{th:conttodis}
Let $\Phi:[0,1]\to (\boZ_2(M),\boF)$ be a continuous map such that 
\begin{itemize}
\item $\Phi(0)=0=\Phi(1)$;
\item $\Phi(x)=\partial[[\Ome_x]]$, $\Ome_x\in \boC(M)$ for all $x\in[0,1]$;
\item $M_0=\emptyset$ and $M_1=M$
\item $\sup_x\MM(\Phi(x))<+\infty$.
\end{itemize}
Then there exists a discrete sweep-out $S$ such that
$$
\LL(S)\le\sup_{x\in[0,1]} \MM(\Phi(x))
$$
\end{thm}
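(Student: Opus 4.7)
The plan is to discretize $\Phi$ along refining dyadic subdivisions of $[0,1]$ and use Almgren's interpolation theorem to pass from flat continuity to bounded fineness in mass, which is the classical mechanism in the Almgren--Pitts framework. First I would exploit the hypothesis that $\Phi:[0,1]\to(\boZ_2(M),\boF)$ is continuous on a compact interval, hence uniformly $\boF$-continuous: for every $\eta>0$ there is $k(\eta)\in\N$ such that $\boF(\Phi(x)-\Phi(y))<\eta$ whenever $|x-y|\le 3^{-k(\eta)}$. Together with the uniform mass bound $L:=\sup_x\MM(\Phi(x))<+\infty$, this allows the whole family to be controlled on grids $I(1,k)_0$ in $\boF$-distance while keeping individual masses under $L$.

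Next I would invoke Almgren's interpolation lemma (as used by Pitts \cite{Pit} and refined by Marques--Neves and Zhou): given two cycles $T_1,T_2\in\boZ_2(M)$ with $\MM(T_i)\le L$ and $\boF(T_1-T_2)$ sufficiently small, there is a finite sequence $T_1=S_0,S_1,\dots,S_N=T_2$ in $\boZ_2(M)$ such that $\max_i\MM(S_i)\le \max(\MM(T_1),\MM(T_2))+\delta$ and $\max_i\MM(S_i-S_{i-1})<\delta$. Applying this to consecutive grid values $\Phi(\tfrac{i}{3^{k_j}})$ with a sufficiently fine parameter $\delta_j\to 0$ produces maps $\phi_j:I(1,k_j')_0\to\boZ_2(M)$ (after subdividing each grid interval into enough substeps) with fineness $\bff(\phi_j)\le\delta_j$ and with $\max_x\MM(\phi_j(x))\le L+\delta_j$. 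The endpoint values $\phi_j([0])=\Phi(0)=0$ and $\phi_j([1])=\Phi(1)=0$ are preserved, so $\phi_j:I(1,k_j')_0\to(\boZ_2(M),\{0\})$.

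To build a $(1,\MM)$-homotopy sequence, I would show $\phi_j$ and $\phi_{j+1}$ are $1$-homotopic with fineness $\delta_j'\to 0$. This is again an application of the interpolation lemma, now in the parameter square: on a sufficiently fine refinement of $I(2,k)_0$, I fill the square by interpolating simultaneously in the $\Phi$-direction and between the two grid resolutions, using that $\phi_j(n(k_j',k)(x))$ and $\phi_{j+1}(n(k_{j+1}',k)(x))$ are $\boF$-close because both approximate $\Phi(x)$, with masses uniformly controlled by $L$. This gives a $(1,\MM)$-homotopy sequence $S=\{\phi_j\}$ with $\LL(S)\le\limsup_j(L+\delta_j)=L$.

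The remaining step, and the place where the sweep-out hypothesis is used, is to identify the class of $S$ in $\pi_1^\#(\boZ_2(M),\MM,\{0\})$. Because $\Phi(x)=\partial[[\Ome_x]]$ with $\Ome_0=\emptyset$ and $\Ome_1=M$, and because the interpolation in the lemma can be realized through cycles of the form $\partial[[\Ome]]$ for $\Ome\in\boC(M)$ varying with small volume change, the discrete sequence $\{\phi_j\}$ sweeps $M$ exactly once in the sense that, under the Almgren isomorphism $\pi_1^\#(\boZ_2(M),\MM,\{0\})\cong H_3(M,\Z)$, it maps to the fundamental class $[M]$. Hence $S\in\Pi_M$ is a discrete sweep-out with $\LL(S)\le L$, as required. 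The main obstacle in carrying this out rigorously is precisely the interpolation lemma: one must check that the flat smallness needed for interpolation is achieved uniformly on the grid, and that the homotopy in the square can be assembled so that the top-parameter cells remain at $\{0\}$ and the class is preserved; this is the content of Theorem~5.1 in \cite{Zho2}, whose statement we are invoking.
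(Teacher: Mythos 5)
The paper does not give its own proof of this statement: it is cited verbatim as Theorem~5.1 of Zhou's paper \cite{Zho2} and used as a black box, so there is no internal argument to compare against. Your outline is indeed the standard route to such a discretization result in the Almgren--Pitts framework (uniform flat continuity on $[0,1]$, Almgren-type interpolation to convert flat-small jumps into mass-fine discrete maps, assembly into a $(1,\MM)$-homotopy sequence, and identification of its class in $\pi_1^\#(\boZ_2(M),\MM,\{0\})\cong H_3(M;\Z)$ via the volume/boundary interpretation of the Almgren isomorphism). So the approach is correct in spirit and aligned with how Zhou actually proves it.

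However, as written your proposal is not a self-contained proof: you acknowledge in the final sentence that the crucial steps (the uniform interpolation estimates and the preservation of the homotopy class through the discretization) ``are the content of Theorem~5.1 in \cite{Zho2}, whose statement we are invoking,'' which is circular --- you cannot invoke the theorem you are trying to prove. If the goal is to prove the statement rather than cite it, you would need to carry out the interpolation argument in detail: verify Almgren's interpolation hypotheses (flat distance small compared to a function of the mass bound and the ambient geometry) uniformly on a grid fine enough, check that the interpolating chains can be taken of the form $\partial[[\Omega]]$ so that the discretized family still realizes the fundamental class, and confirm the square-filling that establishes $1$-homotopy between successive $\phi_j$ with vanishing fineness. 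These are precisely the nontrivial technical points, and leaving them to a citation of the very theorem being proved is a genuine gap.
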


Here $[[\Ome]]$ denotes the element of $\boZ_3(M)$ corresponding to $\Ome$.

Let us notice that if $\{\Gamma_t\}_{t\in[0,1]}$ is a continuous sweep-out of a compact orientable
Riemannian $3$-manifold then $\Phi:t\mapsto [[\Gamma_t]]\in \boZ_2(M)$ satisfies the
hypotheses of the above theorem (as above $[[\Gamma_t]]$ denotes the element of
$\boZ_2(M)$ corresponding to $\Gamma_t$).


\section{The quantity $\boA_1(M)$}\label{sec:minmax}

In this section we recall some results the authors obtained in \cite{MaRo2} (see also 
\cite{CoHaMaRo2}).


\subsection{The quantity $\boA_1(M)$ for compact $M$}

If $M$ is a closed orientable Riemannian $3$-manifold, we denote by $\boO$ the collection of all smooth
orientable embedded closed minimal surfaces in $M$ and $\boU$ the collection of all
smooth non-orientable ones. We then define
$$
\boA_1(M)=\inf(\{|\Sigma|, \Sigma\in \boO\}\cup \{2|\Sigma|, \Sigma\in \boU\})
$$
One of the results of \cite{MaRo2} is the following theorem (Theorem B in \cite{MaRo2})
\begin{thm}\label{th:leastarea}
Let $M$ be an oriented closed Riemannian $3$-manifold. Then $\boA_1(M)$ is equal to
one of the following possibilities.
\begin{enumerate}
\item $|\Sigma|$ where $\Sigma\in \boO$ is a min-max surface of $M$ associated to the
fundamental class of $H_{3}(M)$, $\Sigma$ has index $1$, is separating and $\boA_1(M)=W_M$.
\item $|\Sigma|$ where $\Sigma\in\boO$ is stable.
\item $2|\Sigma|$ where $\Sigma\in \boU$ is stable and its orientable $2$-sheeted
cover has index $0$.
\end{enumerate}

Moreover, if $\Sigma\in\boO$ satisfies $|\Sigma|=\boA_1(M)$, then $\Sigma$ is of type $1$ or
$2$ and if $\Sigma\in\boU$ satisfies $2|\Sigma|=\boA_1(M)$, then $\Sigma$ is of type $3$.
\end{thm}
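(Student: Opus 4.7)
I would take a minimizing sequence $(\Sigma_n)\subset\boO\cup\boU$ with $|\Sigma_n|$ or $2|\Sigma_n|$ converging to $\boA_1(M)$. The Gauss equation bounds $|\Sigma_n|\le -2\pi\chi(\Sigma_n)$, so the topology is controlled, and by replacing each $\Sigma_n$ by a lower-index competitor (using independent negative Jacobi directions to cut area locally) I would arrange a uniform Morse index bound. Sharp's compactness theorem \cite{Shar} then delivers a varifold limit supported on an embedded minimal surface of $M$ with integer multiplicities. The definition of $\boA_1$ together with lower semi-continuity of area forces this limit to be a single component $\Sigma$ realizing the infimum: either $\Sigma\in\boO$ with $|\Sigma|=\boA_1(M)$, or $\Sigma\in\boU$ with $2|\Sigma|=\boA_1(M)$.

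\textbf{Stable realizers.} If $\Sigma\in\boO$ is stable we are in case~(2). If $\Sigma\in\boU$, its orientable double cover $\widetilde\Sigma$ is a minimal surface of area $2|\Sigma|=\boA_1(M)$; since $\boA_1$ is infimal over every embedded minimal surface, $\widetilde\Sigma$ admits no area-decreasing normal deformation (symmetric or antisymmetric under the deck involution), so $\mathrm{ind}(\widetilde\Sigma)=0$, placing us in case~(3).

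\textbf{Unstable realizers via min-max.} The remaining case is that $\Sigma\in\boO$ has index $\ge 1$, or in the non-orientable subcase $\widetilde\Sigma$ has index $\ge 1$. Using the first eigenfunction $u$ of the Jacobi operator, I would push $\Sigma$ in the normal direction $u\nu$ to obtain a short one-parameter family of strictly smaller area on both sides of $\Sigma$, then extend by mean curvature flow (or a Birkhoff-type shortening) on each side of $\Sigma$ until one side collapses to a point and the other expands to fill $M$. This yields a continuous sweep-out $\{\Gamma_t\}_{t\in[-1,1]}$ of $M$ with $\Omega_{-1}=\emptyset$, $\Omega_1=M$, and $\max_t \boH^2(\Gamma_t)=|\Sigma|$; separation of $\Sigma$ is essential here, and a non-separating orientable unstable $\Sigma$ can be excluded by a cut-and-paste/surgery argument producing an embedded competitor of strictly smaller area. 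Converting this to a discrete family via Theorem~\ref{th:conttodis} gives $W_M\le |\Sigma|=\boA_1(M)$; conversely, Theorem~\ref{th:minmax} with Zhou's parity result for non-orientable components yields a min-max minimal surface whose area or twice area is at most $W_M$ and lies in $\boO\cup\boU$, so $\boA_1(M)\le W_M$. Hence $\boA_1(M)=W_M=|\Sigma|$, and the index-one claim follows from the Marques--Neves upper index bound for one-parameter min-max families, while separation is automatic from the sweep-out representing the fundamental class of $H_3(M)$.

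\textbf{Main obstacle.} The delicate point is the sweep-out construction in the third paragraph: extending the first-eigenfunction variation past its natural lifespan while retaining the bound $|\Sigma|$ on area on both sides requires either a singularity-tolerant flow (level-set mean curvature or Brakke flow) or a careful minimization inside each component of $M\setminus\Sigma$. In the non-orientable subcase, one must further lift the antisymmetric unstable direction on $\widetilde\Sigma$ to an embedded competitor of $\Sigma$ in $M$ rather than a self-covering one, and rule out orientable non-separating realizers by a topological/area exchange argument.
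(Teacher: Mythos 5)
The paper does not reprove this result; it simply cites Theorem~B of \cite{MaRo2} (and upgrades case~(3) via Ketover--Marques--Neves). So the comparison has to be with the known proof structure, which hinges on the identity $\boA_1(M)=\min(W_M,\boA_\boS(M))$, Sharp compactness applied to area- and index-bounded families, the Marques--Neves index bound for one-parameter min-max, and Zhou's parity result. Measured against that, your outline has several real gaps.

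First, the starting point is wrong for the stated generality. You bound $|\Sigma_n|\le -2\pi\chi(\Sigma_n)$ via the Gauss equation, but that inequality needs sectional curvature $\le -1$ (or similar negativity), and the theorem is for an arbitrary closed oriented Riemannian $3$-manifold. In a general metric, area does not control genus, so there is no a priori topological bound on a minimizing sequence. Second, ``replacing each $\Sigma_n$ by a lower-index competitor (using independent negative Jacobi directions to cut area locally)'' is not an operation on minimal surfaces: a minimal surface is a critical point, not a minimizer, so moving along a negative Jacobi direction decreases area but leaves the space of minimal surfaces entirely. You have no mechanism to restore minimality with both lower index and lower area, and without a uniform index bound Sharp's theorem does not apply to a raw minimizing sequence. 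The genuine argument does not run through a minimizing sequence at all; it first establishes the realization of $\boA_\boS(M)$ (where index is automatically $0$), then separately analyzes $W_M$ via Almgren--Pitts with the index-one bound, and compares the two.

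Third, the sweep-out step is off. After pushing an index-one separating $\Sigma$ to both sides with the first eigenfunction, the extension to a full sweep-out of $M$ is not done by mean curvature flow or Birkhoff shortening; it is done by filling each complementary component via a min-max in that mean-convex piece (exactly the mechanism re-used in this paper's proof of Proposition~\ref{prop:outside}, following Song) together with an area-minimization argument to rule out interior obstructions. Mean curvature flow of general non-convex surfaces in a $3$-manifold through singularities would require substantial and unavailable regularity theory, and in any case would not give the needed monotone area bound. Fourth, the claim that a non-separating orientable unstable $\Sigma$ is excluded ``by a cut-and-paste/surgery argument producing an embedded competitor of strictly smaller area'' is the wrong mechanism: a non-separating $\Sigma$ represents a nontrivial class in $H_2(M;\Z)$, one minimizes area in that class to get a stable surface of no larger area, and so the realizer was never of type~(1); no surgery is involved, and no strict inequality is available in general. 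Finally, in the non-orientable case, ruling out index one for the double cover is not achieved by the heuristic ``no area-decreasing normal deformation,'' which conflates area-infimality over the whole class of minimal surfaces with stability of a single surface; it requires the specific min-max/degeneration analysis of Ketover--Marques--Neves.

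The pieces you do invoke correctly are: Sharp compactness once a uniform index bound is in hand; Zhou's parity for non-orientable min-max components; the Marques--Neves index-one bound for one-parameter families; and the fact that separation is forced for the type~(1) realizer because the sweep-out represents the fundamental class. But the scaffolding connecting them is not the one that carries the proof, and in two places (the Gauss bound and the index-reducing competitor) it would fail outright.
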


Actually in \cite{MaRo2}, the case (3) mentions the possibility for the orientable
$2$-sheeted cover to have index $0$ or $1$. In fact, the index $1$ case can be ruled out
thanks to the work of Ketover, Marques and Neves \cite{KeMaNe}.

If $\boS$ denotes the collection of all smooth embedded stable minimal surfaces, we define
$\boA_\boS(M)=\inf(\{|\Sigma|, \Sigma\in \boO\cap \boS\}\cup \{2|\Sigma|, \Sigma\in
\boU\cap \boS\})$. Actually we proved in \cite{MaRo2} that
$\boA_1(M)=\min(W_M,\boA_S(M))$. In order to simplify some notations, we will denote
$a(\Sigma)=|\Sigma|$ if $\Sigma\in \boO$ and $a(\Sigma)=2|\Sigma|$ is $\Sigma\in \boU$.

When $M$ is not compact, one can still define $\boO$ and $\boU$ for $M$ by considering
only compact embedded minimal surfaces in $M$. Of course these collections could be empty
but if its not $\boA_1(M)$ is well defined. If $M$ is a cusp manifold this can be done.


\subsection{The filler}\label{sec:filler}

We want to study $\boA_1(M)$ when $M$ is a cusp manifold. In order to do that the idea is
to change $M$ into a compact manifold $D(M)$ that contains all the compact minimal
surfaces of $M$. To do this the main tool are the fillers.

\begin{defn}\label{defn:filler}
Let $(T,d\sigma^2)$ be a flat torus and $L>10$ be a real number. A filler $F$ associated to
$T$ and $L$ is a solid torus endowed with a Riemannian metric $g$ with the following
properties.
\begin{itemize}
\item[$(i$)] Let $T_t$ be the set of points at distance $t$ from $\partial F$. For $t\in[0,L+1)$,
$T_t$ is a smooth flat torus and $T_{L+1}$ is a closed geodesic.
\item[$(ii)$] The diameter of $T_t$ is a decreasing function and the mean curvature vectors points
in the $\partial_t$ direction.
\item[$(iii)$] For $t\in [0,1]$, $T_t$ has the metric $e^{-2t}d\sigma^2$.
\item[$(iv)$] Any minimal surface $\Sigma$ that meets all the $T_t$ for $0\le t\le L-1$
has area at least $\kappa L$ where $\kappa$ is a constant depending on the systole of 
$(T,d\sigma^2)$.
\end{itemize}
\end{defn}

\begin{prop}\label{prop:filler}
Let $(T,d\sigma^2)$ be a flat torus and $L>10$. There exists a filler associated to $T$
and $L$. Moreover, let $\delta$ and $\s$ be the diameter and the systole of 
$(T,d\sigma^2)$ and $K\le \s/\delta$. Then there is $\delta_0>0$ that depends only on 
$K$ such that
\begin{itemize}
\item[$(v)$]if $\delta$ is less than
$\delta_0$, then any minimal surface $\Sigma$ with $\partial\Sigma\subset\partial F$ 
and index at most $1$ satisfies
either $\Sigma\cap T_1=\emptyset$ or $\Sigma\cap T_t\neq\emptyset$ for any $t\le L-1$.
\end{itemize}
\end{prop}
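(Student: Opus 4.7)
The plan is to construct $F$ as an explicit warped product and then verify properties $(i)$--$(v)$ in turn, with the last being the main technical point.

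\emph{Construction.} I would realize $F$ as $T \times [0, L+1]/{\sim}$, where $\sim$ collapses a chosen circle factor of $T \times \{L+1\}$ to a single core circle, and equip it with a metric $g = h_1(t)^2 dx_1^2 + h_2(t)^2 dx_2^2 + dt^2$ in orthonormal coordinates $(x_1, x_2)$ for $d\sigma^2$, chosen so that $\partial_{x_1}$ points along a shortest lattice vector (the meridional direction to be collapsed). I would take $h_1, h_2$ smooth and strictly decreasing, with $h_1(t) = h_2(t) = e^{-t}$ on $[0,1]$ to match the cusp form, $h_1(t) \sim \sinh(L+1-t)$ near $t = L+1$ to produce a smooth closed geodesic at the core, and $h_2$ bounded away from $0$. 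Properties $(i)$ and $(iii)$ are built in by construction, and $(ii)$ follows from the computation $H_{T_t} = -(h_1'/h_1 + h_2'/h_2) > 0$ relative to the unit normal $+\partial_t$.

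\emph{Property $(iv)$.} Here I would use the monotonicity formula. The construction can be arranged so that the sectional curvature of $g$ is bounded and the injectivity radius at every point is bounded below by some $r_0 > 0$ depending on $\s$. Given a minimal $\Sigma$ meeting every $T_t$ with $t \in [0, L-1]$, pick $p_i \in \Sigma \cap T_{2ir_0}$ for $i = 0, 1, \ldots, \lfloor (L-1)/(2r_0)\rfloor$. Since the $t$-coordinate is $1$-Lipschitz, $d_F(p_i, p_j) \ge 2r_0$ for $i \neq j$, so the balls $B(p_i, r_0)$ are pairwise disjoint in $F$. Monotonicity then yields $|\Sigma \cap B(p_i, r_0)| \ge c r_0^2$ for a constant $c$ coming from the curvature bound, and summation gives $|\Sigma| \ge \kappa L$ with $\kappa = c r_0/2$.

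\emph{Property $(v)$.} My plan is to apply Proposition~\ref{prop:max3} on the subcylinder $C = T \times [0, L-1]$. Normalizing the reference base torus $d\bar\sigma^2 = d\sigma^2/\s^2$ to have systole $1$ places these tori in a compact family as long as $K = \s/\delta$ is bounded away from $0$, and the remark following Proposition~\ref{prop:max3} then supplies a uniform threshold $h_0 = h_0(1, K)$. I set $\delta_0$ so that $\delta < \delta_0$ forces $h(0) \le h_0$. If every component of $\Sigma$ is contained in $C$, Proposition~\ref{prop:max3} with $a = 0$ yields $\Sigma \cap \{t \ge 1/2\} = \emptyset$, hence $\Sigma \cap T_1 = \emptyset$, giving case $1$. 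Otherwise some component of $\Sigma$ with boundary on $T_0$ extends beyond $T_{L-1}$, and by the connectedness of its image under the $t$-coordinate (which contains $0$ and a value greater than $L-1$) that component meets every $T_t$ for $t \le L-1$, giving case $2$.

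\emph{Main obstacle.} The delicate step is verifying hypothesis H1 of Proposition~\ref{prop:max3} on $C$, since the filler metric has two warping functions $h_1, h_2$ whereas the reference in H1--H4 has a single warping $h$. Quasi-isometry of $g$ with $\bar g$ requires $\max(h_1,h_2)/\min(h_1,h_2)$ to stay bounded by a constant depending only on $K$ throughout $[0, L-1]$, while $h_1$ must eventually collapse to $0$ at $t = L+1$ and $h_2$ must stay bounded away from $0$. Engineering the interpolation so that this ratio is controlled on $[0, L-1]$, with the collapse of $h_1$ confined to the buffer region $[L-1, L+1]$ where Proposition~\ref{prop:max3} is not applied, is the main work; balancing this against the uniform injectivity radius lower bound needed for the monotonicity argument in $(iv)$ adds a further constraint on the shape of $h_1, h_2$.
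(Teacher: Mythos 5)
Your proposal follows the same broad skeleton as the paper---a warped-product solid torus, monotonicity for $(iv)$, and Proposition~\ref{prop:max3} for $(v)$---but the construction itself differs in a way that leaves the key step unresolved, and you correctly flag this as the ``main obstacle'' without overcoming it.

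The paper's construction does not use two independent strictly decreasing warping functions spread over the whole interval. Instead, on $T\times[0,L]$ the metric is the isotropic warped product $e^{-2f(t)}d\sigma^2+dt^2$, where $f$ is increasing but \emph{bounded by $3$}, and the anisotropy and the collapse to a core geodesic are confined entirely to the final unit-length buffer $T\times[L,L+1]$ via an auxiliary factor $\eta(t-L)$ applied to $dx_1^2$ only. This has two consequences that your plan misses. First, because $f\le 3$, the warping $e^{-f}$ stays in $[e^{-3},1]$ on all of $[0,L]$; so the tori $T_t$ do not shrink with $L$, the injectivity radius in $F_1\setminus F_{L-1}$ is bounded below by a quantity depending on $\s$ alone, and the monotonicity argument for $(iv)$ goes through uniformly in $L$. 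In your version, with $h_1,h_2$ strictly decreasing and $h_1\to 0$ at $t=L+1$, the injectivity radius bound ``depending only on $\s$'' is not automatic and needs precisely the kind of cap ($f\le 3$) that the paper imposes. Second, for $(v)$, since the paper's metric on $T\times[0,L]$ has a \emph{single} warping function, hypotheses H1--H4 of Proposition~\ref{prop:max3} are verified directly (with reference torus rescaled to $d\sigma^2/\delta^2$ and $h(t)=\delta e^{-f(t)}$), with a constant $A$ independent of $L$; there is no two-warping-function issue to engineer around. The anisotropic region $T\times[L,L+1]$ where $\eta$ varies is simply excluded from the application of the proposition. Your proposal, by baking anisotropy into the whole filler, creates the very quasi-isometry problem you identify and then leaves it as ``the main work''---so as written the proof is incomplete at exactly the step that matters.

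A smaller remark: for $(v)$ you also need to be slightly careful about components of $\Sigma$ with no boundary. Proposition~\ref{prop:max3} applies only to components whose (nonempty) boundary lies in $T_a$, so the dichotomy ``all components in $C$ vs.\ some component extends beyond $T_{L-1}$'' should be phrased for the components carrying the boundary $\partial\Sigma\subset T_0$; closed components are handled separately by the mean convexity of the foliation $(T_t)$.
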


\begin{proof}
We construct $F$ as $T\times[L+1]$ with a Riemannian metric
which is singular on $T_{L+1}=T\times\{L+1\}$ in order for $T_{L+1}$ to be a geodesic. We
use the notation $F_t=T\times[t,L+1]$.

Let $f: [0,L+1]\to \R$ be a function satisfying the following properties
\begin{itemize}
\item $f(t)=t$ on $[0,1]$,
\item $f'>0$ on $[0,L+2/3)$ and $f'=0$ on $[L+2/3,L+1]$.
\item $f\le 3$.
\item $f'$ and $f''$ are bounded independently of $L$.
\end{itemize}
On $F\setminus F_L$, we define the metric $g=e^{-2f(t)}d\sigma^2+dt^2$. Since $f(t)=t$ on
$[0,1]$, $(iii)$ is satisfied.

In order to define the metric on $F_L$, we consider a well oriented orthonormal coordinate system on
$(T,d\sigma^2)$ such that $T$ is the quotient of $\R^2$ by the translations by $(\alpha,0)$ and
$(\beta,\ell)$.

Let $\eta:[0,1]\to [0,1]$ be a non increasing function such that $\eta$ is decreasing on
$[1/2,1]$, $\eta=1$ near $0$, and $\eta(x)=(1-x)\frac{2\pi}\alpha e^{f(L+1)}$ near $1$. On
$F_L$ we extend the definition of the metric $g$ by
$$
g=e^{-2f(t)}(\eta^2(t-L)dx_1^2+dx_2^2)+dt^2
$$
Since $\eta(1)=0$, the metric is singular at $t=L+1$. Let $D$ be the unit disk and consider
the solid torus $\T$ constructed as the quotient of $D\times[0,\ell]$ by the relation
$(p,0)\sim (R_{\beta}(p),\ell)$ where $R_{\beta}$ is the rotation of angle $\beta$. If
$(\rho,\theta)$ are the polar coordinates on $D$ and $h :\T\to F_L$ is the map
$(\rho,\theta, z)\mapsto (\frac\alpha{2\pi}\theta,v,L+1-\rho)$ the metric $h^*g$ is given by 
$$
h^*g=e^{-2f(L+1-\rho)}(\eta^2(1-\rho)\frac{\alpha^2}{4\pi^2}\theta^2+dz^2)+d\rho^2
$$
so, near $\rho=0$ (\textit{i.e.} $t=L+1$), it is equal to
$h^*g=\rho^2d\theta^2+e^{-2f(L+1)}dz^2+d\rho^2$ which is a smooth metric on $\T$ near the
core circle $\{\rho=0\}$. So $F$ is a smooth solid torus with a smooth metric and $(i)$ is satisfied.

Because of the monotonicity of $f$ and $\eta$, $(ii)$ is satisfied. Moreover the curvature
of $g$ is uniformly controlled on $F\setminus F_L$. If $\rho_0$ is the minimum of $1$ and
half the systole of $(T,d\sigma^2)$, then for any $p\in F_1\setminus F_{L-1}$ the geodesic
ball of center $\rho$ and radius $e^{-3}\rho_0$ is embedded in $F\setminus F_L$. 

Let $\Sigma$ be a minimal surface that meets all the $T_t$ for $t\in[0,L]$. Consider
$t_n=1+2e^{-3}\rho_0n$ and, for any $n\in\{0,\dots,n_0\}$ where $t_{n_0}\le L+2\le
t_{n_0+1}$, let $p_n$ be in $T_{t_n}$ with $p_n\in T_{t_n}\cap \Sigma$. Then by the
monotonicity formula, the area of $\Sigma$ in the ball of radius $e^{-3}\rho_0$ and center
$p_n$ is at least $ce^{-6}\rho_0^2$ for some universal constant $c$. Since these balls are
disjoint, the area of $\Sigma$ in $F\setminus F_L$ is at least
$$
(n_0+1)ce^{-6}\rho_0^2\ge \frac{L-3}2ce^{-3}\rho_0\ge \kappa L
$$
if $L\ge 10$ and where $\kappa$ only depends on the systole of $(T,d\sigma^2)$.

For item $(v)$, we notice that $(T,\frac1{\delta^2}d\Sigma^2)$ belongs to a 
compact subset of flat tori fixed by $K$. So Proposition~\ref{prop:max3} applies to 
$(F\setminus F_L,\delta^2e^{-2f(t)}\frac{d\sigma^2}{\delta^2}+dt^2)$ to prove that if 
$\Sigma$ has index at most $1$ and $\Sigma \subset T\times[0,,L-1]$ then 
$\Sigma\subset T\times [0,1]$.
\end{proof}


\subsection{The quantity $\boA_1(M)$ for cusp manifolds}\label{sec:boa1cusp}

In this section we recall the study of compact minimal surfaces inside orientable cusp manifolds we
made in \cite{CoHaMaRo,MaRo2}.

Let $M$ be a cusp manifold. First we prove that $M$ contains a compact embedded minimal
surface. Let $\eps$ be such that the $\eps$-thin part is only made of cusp ends. Since
$\partial M_{[\eps,\infty)}$ is smooth there is a homotopically closed family $\Lambda$ of
sweep-outs associated to a Morse function on $M_{[\eps,\infty)}$ (we recall that the tori components of $\partial M_{[\eps,\infty)}$ are leaves of the sweep-outs). If $\eps'<\eps$,
$M_{[\eps',\eps]}$ is foliated by tori that can be used to extend any continuous sweep-out
in $\Lambda$ into a sweep-out of $M_{[\eps',\infty)}$ that belongs to a homotopically 
closed
family $\Lambda'$. Since $W(M_{[\eps,\infty)},\partial M_{[\eps,\infty)},\Lambda)\ge
|\partial M_{[\eps,\infty)}|$ we obtain
$$
W(M_{[\eps,\infty)},\partial M_{[\eps,\infty)},\Lambda)\ge W(M_{[\eps',\infty)},\partial
M_{[\eps',\infty)},\Lambda')
$$
So there is $W_0>0$ such that $W_0\ge W(M_{[\eps,\infty)},\partial
M_{[\eps,\infty)},\Lambda)$ for any $\eps$. Besides a continuous sweep-out of
$M_{[\eps,\infty)}$ must sweep out also a fixed geodesic ball in $M$. So there is $w_0$
such that $W(M_{[\eps,\infty)},\partial M_{[\eps,\infty)},\Lambda)\ge w_0$ for any
$\eps$.

Thus we can choose $\eps$ small such that any flat tori $C$ in $\partial M_{[\eps,\infty)}$ has small diameter and
$w_0>|\partial M_{[\eps,\infty)}|$.
For each $C$, we consider a
filler $F^C$
associated to the flat torus $C$ and $L$ that will be chosen later. $\eps$ is chosen small
enough such that item $(v)$ of Proposition~\ref{prop:filler} is satisfied. Since there are a finite
number of $C$, item $(iv)$ of Definition~\ref{defn:filler} gives some constant $\kappa>0$
independent of $C$. Then $L$ is chosen such that $\kappa L\ge W_0+1$.

We can glue each
filler $F^C$ along $C$ to obtain a compact manifold without boundary denoted $D(M)$ with
some metric. The construction of $D(M)$ depends on two parameters $\eps$ and $L$, so
sometimes we will write $D_{\eps,L}(\barre M)$ (actually it also depends on the choice of
some coordinates on $F$). We
will use this construction in the following sections. $D(M)$ contains isometrically a $1$
tubular neighborhood of
$M_{[\eps,\infty)}$. Let $\{\Gamma_t\}_{t\in[0,1]}$ be a
continuous sweep-out of $M_{[\eps,\infty)}$ with $\LL(\Gamma_t)\le
W(M_{[\eps,\infty)},\partial M_{[\eps,\infty)},\Lambda)+1/2$. We can extend
$\{\Gamma\}_{t\in[0,1]}$ to a continuous sweep-out
$\{\widetilde\Gamma_t\}_{t\in[-L-1,1]}$ of $D(M)$ by considering
$\widetilde\Gamma_t=\cup_C \partial F_{-t}^C$ for $t\in[-L-1,0]$. Since, for $t<0$, $|\widetilde\Gamma_t|\le
|\partial M_{[\eps,\infty)}|$ we have $\LL(\widetilde\Gamma_t)\le \LL(\Gamma_t)$. By
Theorem~\ref{th:conttodis}, the width $W_{D(M)}$ is then less than $W_0+1/2$. Thus by
Theorem~\ref{th:leastarea}, there is a minimal surface $\Sigma$ in $D(M)$  
with index at 
most $1$ such that
$a(\Sigma)\le W_0+1/2$.

Because of our choice of $\eps$ and items $(iv)$ and $(v)$, if
$\Sigma$ enters in some $F_1^C$ then $a(\Sigma)\ge |\Sigma \cap F^C|\ge \kappa L\ge
W_0+1$; this is impossible. So $\Sigma$ stays outside of $F_1^C$
for any $C$ so $\Sigma$ is embedded in the part isometric to the $1$-tubular neighborhood
of $M_{[\eps,\infty)}\subset M$: $\Sigma$ is a compact minimal surface in $M$.

Now we know that $M$ contains compact embedded minimal surfaces and we can define the number
$\boA_1(M)$. In order to prove that $\boA_1(M)$ is realized as in
Theorem~\ref{th:leastarea}, we have the following argument. Let $S$ be a 
compact minimal surface in $M$. We construct $D(M)$ as
above with an extra hypothesis on $\eps$ which is $M_{[\eps,\infty)}$ contains $S$ and  
all compact
embedded minimal surfaces in $M$ with index at most $1$. The above 
construction gives $\boA_1(M)\le W_0+1/2$. 

Let $\Sigma$ is a minimal surface that realizes $\boA_1(D(M))$ (it has index at most $1$), 
we have $a(\Sigma)\le 
a(S)$. If $\Sigma$ enters in into $F^C_1$ for some $C$, we have
$$
a(\Sigma)\ge |\Sigma\cap F^C|\ge \kappa L\ge W_0+1\ge \boA_1(M)+1/2
$$
So $\Sigma$ does not enter into such a filler: $\Sigma$ belongs to $M$. Thus 
$\boA_1(D(M))=\boA_1(M)$ and 
$\boA_1(D(M))$ is realized by a minimal surface as in
Theorem~\ref{th:leastarea}.


The remainder of this paper is devoted to the study of the continuity of the $\boA_1$ functional
over the collection of orientable cusp manifolds. We are going to study the lower and the upper
semi-continuity of $\boA_1$.


\section{The upper semi-continuity study}\label{sec:upsemi}

In this section, we consider $(M_i)_i$ a sequence of cusp manifolds that converges to
$\barre M$ for the geometric convergence. The first step and the main step of the upper
semi-continuity study is to prove that the sequence $(\boA_1(M_i))_i$ is bounded. The
following proposition answers this question.

\begin{prop}\label{prop:upperbound}
Let $M_i\to \barre M$ be a converging sequence of cusp manifolds. Then for small $\eps$
and large $L$, $\limsup \boA_1(M_i)\le W_{D(\barre M)}$.
\end{prop}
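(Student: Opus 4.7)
The plan is to transport an almost-optimal continuous sweep-out of $D(\barre M)$ to a continuous sweep-out of a compact manifold $D(M_i)$ built analogously from $M_i$, and then conclude via Theorem~\ref{th:conttodis} and Theorem~\ref{th:leastarea}. Fix $\eps>0$ small enough and $L>10$ large enough that $D(\barre M)=D_{\eps,L}(\barre M)$ is well defined and $\kappa L > W_{D(\barre M)}+1$ (ensuring, via property~(iv) of Definition~\ref{defn:filler}, that any minimizer of $\boA_1(D(\barre M))$ avoids the fillers). For an arbitrary $\delta>0$, pick a continuous sweep-out $\{\Gamma_t\}$ of $D(\barre M)$ with $\LL(\{\Gamma_t\})\le W_{D(\barre M)}+\delta$.

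For $i$ large, construct $D(M_i)$ as follows. By Lemma~\ref{lem:conv2}, each component $C$ of $\partial \barre M_{[\eps,\infty)}$ corresponds to a component $C_i$ of $\partial M_{i,[\eps,\infty)}$, with $\phi_i(C)$ a small graph over $C_i$. On the opposite side of $C_i$, $M_i$ has either a cusp end (in which case I glue a filler as in Section~\ref{sec:filler}) or a tubular neighborhood $N_{R_{\ell_i}}$ of a short geodesic of $M_i$ (in which case I keep that tubular neighborhood itself as a ``natural'' filler). Since $C$ and $C_i$ are $\kappa_i$-quasi-isometric with $\kappa_i\to 1$, the gluing can be arranged so that $D(M_i)$ is $\kappa_i'$-quasi-isometric to $D(\barre M)$ with $\kappa_i'\to 1$.

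Now transport $\{\Gamma_t\}$ to a continuous sweep-out $\{\Gamma^i_t\}$ of $D(M_i)$: use $\phi_i$ on the $\barre M_{[\eps,\infty)}$-piece, the near-isometric identification on the synthetic-filler pieces, and the foliation by equidistant tori (collapsing at the core geodesic) on the tubular pieces. The area of each leaf is preserved up to a factor $1+o(1)$, so $\LL(\{\Gamma^i_t\})\le W_{D(\barre M)}+\delta+o(1)$. Applying Theorem~\ref{th:conttodis} to $t\mapsto [[\Gamma^i_t]]$ yields a discrete sweep-out of $D(M_i)$ of comparable width, so $W_{D(M_i)}\le W_{D(\barre M)}+\delta+o(1)$. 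Theorem~\ref{th:leastarea} combined with the argument of Section~\ref{sec:boa1cusp} (in which the filler bound~(iv) is replaced on the tubular side by Proposition~\ref{prop:estimarea}) forces any minimizer of $\boA_1(D(M_i))$ to remain inside the part of $D(M_i)$ coming from $M_i$, and so $\boA_1(M_i)\le \boA_1(D(M_i))\le W_{D(M_i)}$. Taking $\limsup_i$ and then $\delta\to 0$ gives $\limsup\boA_1(M_i)\le W_{D(\barre M)}$.

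The main obstacle is the ``mixed'' nature of $D(M_i)$: some components of the complement of $\phi_i(\barre M_{[\eps,\infty)})$ are synthetic fillers (matching cusps on both sides of the convergence) while others are hyperbolic tubular neighborhoods of short geodesics of $M_i$ (matching a cusp of $\barre M$ with a closing geodesic in $M_i$). The tubular pieces are not fillers in the literal sense of Definition~\ref{defn:filler}, but Proposition~\ref{prop:estimarea} plays the role of condition~(iv) and the foliation by equidistant tori plays the role of the filler's built-in sweep-out. Keeping track of the $\kappa_i',k_i\to 1$ distortions simultaneously on both types of pieces, and checking that the transported family still satisfies the sweep-out axioms (sw1)--(sw3'), is the technical heart of the argument.
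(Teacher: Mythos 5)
The proposal takes a genuinely different route from the paper and has a gap that prevents it from working as written.

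The central problem is the claim that ``the gluing can be arranged so that $D(M_i)$ is $\kappa_i'$-quasi-isometric to $D(\barre M)$ with $\kappa_i'\to 1$.'' This is false. Your $D(M_i)$ keeps the hyperbolic tubular neighborhoods $N_{R_{\ell_i}}$ of the short geodesics of $M_i$, while the corresponding piece of $D(\barre M)$ is a synthetic filler $F$ of fixed length $L+1$ whose tori $T_t$ have area bounded below by $e^{-6}|T_0|$ until near the core. By contrast $N_{R_{\ell_i}}$ has length $R_{\ell_i}\to\infty$ and cross-sectional tori $S_r$ with $|S_r|=\pi\ell_i\sinh(2r)\to 0$ exponentially as $r\to 0$. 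These solid tori are not quasi-isometric with factor $\to 1$ --- they are not even quasi-isometric with a bounded factor. Consequently, the assertion that ``the area of each leaf is preserved up to a factor $1+o(1)$'' under the transport is also false on the tubular pieces, and the proposed leaf-by-leaf transport is undefined for a leaf of an almost-optimal sweep-out of $D(\barre M)$ that straddles the boundary between the thick part and a filler. Without the quasi-isometry you have no mechanism to compare $W_{D(M_i)}$ with $W_{D(\barre M)}$, and the sweep-out construction on $D(M_i)$ is not carried out. A secondary issue: the substitution of Proposition~\ref{prop:estimarea} for filler condition~(iv) on the tubular side is both unnecessary (the tubular ends are already part of $M_i$, so a minimizer located there is a bona fide compact minimal surface in $M_i$ and needs no excluding) and insufficient --- the lower bound it gives is $\kappa\s_0 e^{R-R_\ell}\le\kappa\s_0$, a small constant, nothing like the growing bound $\kappa L$ that property~(iv) provides.

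The paper sidesteps exactly this difficulty by \emph{not} building a compactification of $M_i$. It constructs an auxiliary manifold $N_i$ by taking $\phi_i(\barre M_{[\eps,\infty)})$, blending the metric $g_i$ with $\bar g$ on a collar $T\times[-1,0]$ near the boundary, and gluing on the \emph{same} synthetic filler $F$ used in $D(\barre M)$. The tubular ends of $M_i$ play no role at all. Since the thick part and the filler are each close to their counterparts in $D(\barre M)$, the map $\phi_i:D(\barre M)\to N_i$ is genuinely a $\kappa_i'$-quasi-isometry with $\kappa_i'\to 1$, so the width comparison is immediate. Then Proposition~\ref{prop:max3} and Proposition~\ref{prop:filler} are used to show that any index $\le 1$ minimal surface in $N_i$ with area less than $W_{D(\barre M)}+1/2$ must lie in $N_i^1\subset M_i$ where $\tilde g_i=g_i$, so the $\boA_1(N_i)$-minimizer is a genuine minimal surface in $M_i$ of area at most $\kappa_i'^2 W_{D(\barre M)}$.
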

	
\begin{proof}
The idea of
the proof consist in constructing a Riemannian manifold $(N_i,\tilde g_i)$ which is
$\kappa_i$-quasi isometric to $D(\barre M)$ with $\kappa_i\to 1$ and such that a large
part $N_i^1$ of $N_i$ is isometric to a large part $M_i$. Moreover $N_i$ is such that any
minimal surface with index at most $1$ that gets out of $N_i^1$ has area at 
least $W_{D(\barre M)}+1/2$. As a consequence,
a minimal surface $S_i$ in $N_i$ realizing $\boA_1(N_i)$ satisfies
$a(S_i)\le\kappa_i^2W_{D(\barre M)}<W_{D(\barre M)}+1/2$ for large $i$ and is contained in
$N_i^1$. Thus $\limsup \boA_1(M_i)\le \limsup a(S_i)\le \limsup \kappa_i^2W_{D(\barre
M)}=W_{D(\barre M)}$.

We choose $\eps$ small such that the $\eps$-thin part of $M$ is made only of cusp ends.
The convergence $M_i\to \barre M$ gives us  $\phi_i:\barre
M_{[\eps,\infty)}\to M_i$ as in Subsection~\ref{sec:conv}. From
Section~\ref{sec:boa1cusp}, we know that there is $W_0>0$ independent of $\eps$ and $L$
such that $W_{D_{\eps,L}(\barre M)}\le W_0+1/2$.

Let $C$ be one boundary component of $\barre M_{[\eps,\infty)}$ and $\barre A$ the part of
the $2$-tubular neighborhood of $C$ inside $\barre M_{[\eps,\infty)}$ (the rest of the
proof is written as
there is only one $C$ in $\partial \barre M_{[\eps,\infty)}$, actually we need to repeat the
argument for each $C$). $\barre A$ can be parametrized by $T\times[-2,0]$ with the
metric $\barre g=e^{-2x_3}(dx_1^2+dx_2^2)+dx_3^2$ where $(x_1,x_2)\in T$ are orthonormal
coordinates on $C$. 

By Subsection~\ref{sec:conv}, $\phi_i(\barre A)$ is a one sided neighborhood of $\phi_i(C)$ in
$\phi_i(\barre M)\subset M_i$. On $\phi_i(\barre A)$ we have the coordinates $T\times[-2,0]$ with the metric $g_i=a_{i,kl}dx_kdx_l$
 that $C^\infty$ converges to $\bar g$.

Let $N_i^1$ be $\phi_i(\barre M_{[\eps,\infty)}^1)$ with the metric $g_i$ where $\barre
M_{[\eps,\infty)}^1$ is the set of points in $M_{[\eps,\infty)}$ at distance at least $1$
from $\partial\barre M_{[\eps,\infty)}$. We notice that
$N_i^1$ contains the part of $\phi_i(\barre A)$ parametrized by $T\times[-2,-1]$. We are going to
modify the metric $g_i$ on $T\times[-1,0]$ in order to define a new metric $\tilde g_i$ on
$\phi_i(\barre M_{[\eps,\infty)})$ which will be the Riemannian manifold $N_i^2$.

Let $\chi:[-1,0]\to \R$ be $C^\infty$ such that $0\le\chi\le 1$, $\chi=1$ near $-1$ and
$\chi=0$ near $0$. We then define $\tilde g_i=\chi(x_3)g_i+(1-\chi(x_3))\bar g$. Since
$g_i$ and $\bar g$ are $C^\infty$ close. $\tilde g_i$ is also $C^\infty$ close to $\bar
g$. As explained above, $\tilde g_i$ turns $\phi_i(\barre M_{[\eps,\infty)})$ into a new
Riemannian manifold $(N_i^2,\tilde g_i)$. The map $\phi_i: \barre M_{[\eps,\infty)}\to
N_i^2$ is still well defined and since the metrics $\tilde g_i$ converge in the $C^\infty$
topology to $\bar g$.
$\phi_i$ is a $\kappa_i'$ quasi-isometry where $\kappa_i'\to 1$. Moreover $\phi_i$ is an
isometry close to $\partial \barre M_{[\eps,\infty)}$.

Let $L$ be large and consider a filler $F$ associated to $T$ and $L$. Since $N_i^2$ and
$\barre M_{[\eps,\infty)}$ are isometric close to their boundary we can glue to all of
them the filler $F$ to produce $(D_{\eps,L}(\barre M),\tilde g)$ and $(N_i,\tilde g_i)$
and extend the definition of
$\phi_i$ to a map $D(\barre M)\to N_i$ which is the identity on the filler. As a
consequence $\phi_i: D(\barre M)\to N_i$ is a $\kappa_i'$ quasi-isometry.

Let us estimate the area of a minimal surface $S\subset N_i$ with index at 
most $1$ that is not contained in
$N_i^1$. Thus $S$ must enter in some part of $N_i$ which is isometric to $T\times[-2,L]$
endowed with the
metric $\tilde g_i=\tilde a_{i,kl}dx_kdx_l$ which is $C^\infty$ close to $\tilde g=\bar g$ on
$T\times[-2,0]$ and is equal to $\tilde g=e^{-2f(x_3)}(dx_1^2+dx_2^2)+dx_3^2$ on
$T\times[0,L]$ ($f$ is introduced in Section~\ref{sec:filler}). Because of our choice of
$f$ function, the metrics $\tilde g_i$ and
$\tilde g$ satisfy the hypotheses H1, H2, H3 and H4 of Section~\ref{sec:maxprinciples} for a uniform
constant $A$. 

If $S$ does not meet all the tori $T_s$ for $s\in [-2,L]$ then, by
Proposition~\ref{prop:max3}, $S$ must stay outside of $T\times[-3/2,L+1]$, so $S\subset
N_i^1$. Since we assume $S_i\not\subset N_i^1$, it must meet all the tori $T_s$ for $s\in
[0,L]$. Then by Proposition~\ref{prop:filler}, $|S|\ge \kappa L$ for some $\kappa$
that only depends on the injectivity radius of $T_0$. Now, we choose $L$ large enough such
that $\kappa L> W_0+1$. We obtain $|S|\ge\kappa L>W_0+1 \ge W_{D(\barre M)}+1/2$. 
This
finishes the construction of $N_i$ and then $\limsup \boA_1(M_i)\le W_{D(\barre M)}$.
\end{proof}

We know that for $\eps$ small and $L$ large we have $\boA_1(\barre M)=\boA_1(D(\barre
M))=\min(\boA_\boS(\barre M),W_{D(\barre M)})$. So the above result gives us a first upper
semi-continuity property.

\begin{prop}\label{prop:upsemi}
Let $M_i\to \barre M$ be a converging sequence of cusp manifolds. If one of the
following hypotheses is satisfied then $\limsup \boA_1(M_i)\le \boA_1(\barre M)$.
\begin{itemize}
\item $\boA_1(\barre M)=W_{D(\barre M)}$
\item $\boA_1(\barre M)$ is realized by a stable non separating minimal surface $\Sigma$
\item $\boA_1(\barre M)$ is realized by a stable non degenerate minimal surface $\Sigma$
\end{itemize}
\end{prop}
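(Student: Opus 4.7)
The first hypothesis is immediate from Proposition~\ref{prop:upperbound}: in that case $\limsup \boA_1(M_i) \le W_{D(\barre M)} = \boA_1(\barre M)$. For the two remaining cases, fix a stable minimal surface $\Sigma \subset \barre M$ realizing $a(\Sigma) = \boA_1(\barre M)$, and choose $\eps > 0$ small enough that $\Sigma \subset \barre M_{[\eps,\infty)}$ and the $\eps$-thin part of $\barre M$ consists only of cusp ends. Geometric convergence then provides $\kappa_i$-quasi-isometric embeddings $\phi_i : \barre M_{[\eps,\infty)} \to M_i$ with $\kappa_i \to 1$, and by the remark after Lemma~\ref{lem:conv2} the pulled-back metrics $\phi_i^* g_i$ converge to $\bar g$ in $C^\infty$ on compact subsets. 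In particular the embedded surface $\Sigma_i^0 := \phi_i(\Sigma) \subset M_i$ satisfies $|\Sigma_i^0| \to |\Sigma|$ and its second fundamental form converges to that of $\Sigma$, so $\Sigma_i^0$ is close to being $g_i$-minimal.

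For the non-degenerate subcase, stability gives $L_\Sigma := \Delta_\Sigma + |A|^2 + \Ric(\nu,\nu) \ge 0$ and non-degeneracy gives $\ker L_\Sigma = 0$, so $L_\Sigma$ is invertible on smooth normal sections. I would parametrize nearby surfaces in $M_i$ as graphs of small functions $u$ in the $g_i$-normal bundle of $\Sigma_i^0$ and read the resulting minimal surface equation back over $\Sigma$ using $\phi_i$. Because $\phi_i^* g_i \to \bar g$ in $C^\infty$, this equation is a $C^\infty$-small perturbation of the minimal surface equation on $\Sigma$, with linearization $L_\Sigma$ at the origin. The implicit function theorem then produces, for $i$ large, a small solution $u_i \to 0$ yielding a genuine minimal surface $\Sigma_i \subset M_i$ with $|\Sigma_i| \to |\Sigma|$; hence $\limsup \boA_1(M_i) \le \lim a(\Sigma_i) = \boA_1(\barre M)$. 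If $\Sigma$ is one-sided, the same argument is carried out on its orientation double cover.

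For the non-separating subcase stability may be degenerate, so the implicit function theorem is unavailable and I would instead use area minimization. Non-separation means $[\Sigma] \neq 0$ in $H_2(\barre M;\Z)$ (or in $H_2(\barre M;\Z/2)$ if $\Sigma$ is one-sided), and since the difference between $\barre M$ and $M_i$ is concentrated near short geodesics and cusps that are uniformly away from $\Sigma$, the pushed class $[\Sigma_i^0]$ remains non-trivial in $H_2(M_i;\Z)$ (respectively $\Z/2$) for $i$ large. Minimizing area in this class, by the standard existence and interior regularity of codimension-one area-minimizers in $3$-manifolds, produces a smooth embedded stable minimal surface $\Sigma_i \subset M_i$ with $a(\Sigma_i) \le a(\Sigma_i^0) \to a(\Sigma)$, and the bound follows.

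The main obstacle, in the non-separating subcase, is ensuring that the area-minimizer $\Sigma_i$ is realized by a compact smooth surface in $M_i$ rather than escaping into a cusp end or into the thin tubular neighborhood of one of the short geodesics created by the geometric convergence $M_i \to \barre M$. I would handle this with the confinement results of Section~\ref{sec:maxprinciples}: Proposition~\ref{prop:max3} rules out a stable embedded minimal surface pushing into a thin cross-section whose transverse flat torus is sufficiently small, while Proposition~\ref{prop:estimarea} shows that if $\Sigma_i$ did intersect a deep part of a tubular neighborhood of a short geodesic then its area would blow up like $e^{R - R_\ell}$, contradicting the uniform upper bound $a(\Sigma_i) \le a(\Sigma_i^0) \to a(\Sigma)$. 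These confinement estimates place $\Sigma_i$ in a compact subset of the thick part of $M_i$, where standard Federer--Simon regularity gives a smooth embedded minimizer and completes the proof.
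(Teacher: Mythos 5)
Your proposal is correct and essentially captures the structure of the paper's argument. The first case is identical (immediate from Proposition~\ref{prop:upperbound}), and your non-degenerate case is the same implicit-function-theorem deformation the paper uses: pull the metrics back to $\barre M_{[\eps,\infty)}$ via $\phi_i$, observe $\phi_i^* g_i \to \bar g$ in $C^\infty$, and perturb $\Sigma$ to a nearby $\tilde g_i$-minimal surface because $L_\Sigma$ is invertible.

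The genuine difference is in the non-separating case. The paper does not minimize in the non-compact $M_i$ at all; it views $\phi_i(\Sigma)$ as a non-separating surface in the \emph{compact} filled manifold $D_{\eps_i,L_i}(M_i)$ and minimizes area in its homology class there, obtaining $S_i$ with $a(S_i)\le a(\phi_i(\Sigma))\le \kappa_i^2\,\boA_1(\barre M)$, and then simply invokes $\boA_1(M_i)=\boA_1(D(M_i))\le a(S_i)$, an identity already established in Section~\ref{sec:boa1cusp}. Your approach minimizes directly in $M_i$ and then confines the minimizer via Propositions~\ref{prop:max3} and~\ref{prop:estimarea}. That route is plausible but has a subtle gap: those confinement propositions are statements about an already-existing embedded minimal surface of controlled index, whereas what you need to control first is a \emph{minimizing sequence} (to prevent mass from escaping down a cusp or into a Margulis tube and to get existence and regularity of the limit). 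To make your version rigorous you would either have to run a cut-and-paste comparison with the torus leaves on the minimizing sequence itself, or --- more simply --- pass through the compactification $D(M_i)$ exactly as the paper does, which is precisely what the filler construction was designed for. So the filler route is not just cleaner; it is what closes the existence step your sketch leaves open.

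Two small further points. Your sign convention for the stability operator, $L_\Sigma = \Delta_\Sigma + |A|^2 + \Ric(\nu,\nu) \ge 0$, is reversed: with that sign, stability means $L_\Sigma \le 0$ (equivalently, $-L_\Sigma\ge 0$); this does not affect the argument since all you use is invertibility. Also, for the one-sided non-degenerate case a bit more care is needed than ``carry out the argument on the double cover,'' since the deformation must descend to the quotient (equivalently, one works with sections of the normal line bundle twisted by the orientation character), but this is standard.
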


\begin{proof}
The first case comes directly from the above proposition.

Let $\Sigma$ be a non separating minimal surface that realizes $\boA_1(\barre M)$. Let
$\eps$ be small such that $\Sigma$ is contained in the $\eps$-thick part of $\barre M$. 
Let $\phi_i : \barre M_{[\eps,\infty)}\to M_i$ be the $\kappa_i$ quasi-isometry
associated to the convergence $M_i\to \barre M$. Then $\phi_i(\Sigma)$ is a surface in
$M_i$ with $ a(\phi_i(\Sigma))\le \kappa_i^2a(\Sigma)$. Because of the topology of the
$\eps$-thin part (cusps or solid tori), $\phi_i(\Sigma)$ is non separating in $M_i$.
So taking a small $\eps_i$ and a large $L_i$ we can see $\phi_i(\Sigma)$ as a non separating
surface in $D_{\eps_i,L_i}(M_i)$. So minimizing the area in the non vanishing homology class of
$\Sigma$ there is a minimal surface $S_i$ in $D(M_i)$ with $a(S_i)\le
a(\phi_i(\Sigma))\le \kappa_i^2a(\Sigma)=\kappa_i^2\boA_1(\barre M)$. Thus
$$
\boA_1(M_i)= \boA_1(D(M_i))\le a(S_i)\le \kappa_i^2\boA_1(\barre M)
$$
and this gives the result.

Concerning the last case, as above, let $\eps$ be small such that $\Sigma$ is contained 
in the $\eps$-thick part of $\barre M$ and $\phi_i:\barre
M_{[\eps,\infty)}\to M_i$. Let $h_i=\phi_i^* g_i$. Since $M_i\to \barre M$, the metrics
$h_i$ converge in the $C^\infty$ topology to $\barre g$. Since $\Sigma$ is a non
degenerate surface, for large $i$, $\Sigma$ can be deformed to a minimal surface $S_i$ in
$(\barre M_{[\eps,\infty)},h_i)$. So $\phi_i(S_i)$ is a minimal surface in $M_i$ and 
$$
\limsup \boA_1(M_i)\le \limsup a(S_i)=a(\Sigma)=\boA_1(\barre M)
$$ 
\end{proof}

\begin{remarq}
We notice that the hypothesis $\boA_1(\barre M)=W_{D(\barre M)}$ is satisfied if
$\boA_1(\barre M)$ is realized by an index $1$ minimal surface.

The second case is realized if $\boA_1(\barre M)$ is realized by a non orientable minimal
surface.
\end{remarq}


\section{The lower semi-continuity study}\label{sec:lowsemi}

In this section we are going to prove that the $\boA_1$ functional is lower
semi-continuous. 





\subsection{An exclusion property}

Let $S$ be a two-sided embedded surface. Let $\nu$ be a choice of a unit normal vectorfield
along $S$ and $f:S\to \R$ be a smooth function. Then we can define
$$
\exp_{S,f}: S\to M; p\mapsto \exp_p(f(p)\nu(p))
$$
If $f$ is sufficiently small, $\exp_{S,f}(S)$ is an embedded surface which inherits from
$S$ a natural unit normal vector still denoted by $\nu$. The lemma below is inspired
by Lemma~16 in \cite{Son}

\begin{lem}\label{lem:increasing}
Let $S$ be a two-sided embedded surface and $U$ be a subset of $S$ such
that the mean curvature of $S$ vanishes on $U$. If $S\setminus U$ has non empty
interior, there is a positive function $f$ and $\tau>0$ such that $\exp_{S,sf}(S)$
has positive mean curvature on $\exp_{S,sf}(U)$ with respect to the naturally induced unit
normal vector for $0<s<\tau$.
\end{lem}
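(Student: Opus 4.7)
The plan is to derive the conclusion from the standard first-variation formula for mean curvature under a normal deformation of $S$, and then to construct the desired $f$ via a first Dirichlet eigenfunction of the Jacobi operator on a small domain around (a piece of) $U$.

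\textbf{First variation and reduction.} For a smooth $f : S \to \R$, set $F_s(p) := \exp_p(sf(p)\nu(p))$ and let $H_s(p)$ denote the mean curvature of $F_s(S)$ at $F_s(p)$ computed with the naturally induced unit normal. The standard formula gives the expansion
$$H_s(p) = H_0(p) - s\,L_S f(p) + O(s^2),$$
where $L_S := \Delta_S + |A_S|^2 + \Ric(\nu,\nu)$ is the Jacobi operator of $S$ and the remainder is controlled uniformly on compact sets by $\|f\|_{C^2}$ and the geometry of $S$. Since $H_0 \equiv 0$ on $U$, one has $H_s|_U = -s\,L_S f|_U + O(s^2)$. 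Thus it is enough to produce a smooth positive function $f$ on $S$ with $L_S f \le -c < 0$ uniformly on $U$; once this is done, shrinking $\tau$ immediately yields $H_s > 0$ on $\exp_{S,sf}(U)$ for all $s \in (0,\tau)$.

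\textbf{Construction of $f$.} Here the hypothesis $\mathrm{int}(S\setminus U) \ne \emptyset$ enters. On any sufficiently small relatively compact open set $K \subset S$, the first Dirichlet eigenvalue $\mu_1(K)$ of $-L_S$ is strictly positive: by the Rayleigh quotient, $-\Delta_S$ dominates the bounded zero-order term $q := |A_S|^2 + \Ric(\nu,\nu)$ as $K$ shrinks. The associated first eigenfunction $\phi > 0$ on $K$ then satisfies $L_S \phi = -\mu_1(K)\phi < 0$ on $K$. In the model case where a compact piece $\bar U_0 \subset U$ is precompactly contained in such a $K$, I pick an intermediate open set $V'$ with $\bar U_0 \subset V' \Subset K$, a smooth cutoff $\chi : S \to [0,1]$ equal to $1$ on $V'$ and supported in $K$, and a constant $c_0 \in (0,\inf_{V'}\phi)$, and set
$$f := \chi\,\phi + (1-\chi)\,c_0.$$
Then $f > 0$ everywhere on $S$, and on $V' \supset \bar U_0$ one has $f = \phi$, so $L_S f \le -\mu_1(K)\inf_{\bar U_0}\phi < 0$ on $U_0$. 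If $\bar U$ is not precompact in a single such $K$, one covers $U$ by an exhaustion of compact pieces and glues the local constructions, using the open subset of $\mathrm{int}(S\setminus U)$ as a safe zone where the cutoffs can transition without affecting $L_S f$ on $U$.

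\textbf{Main obstacle.} The substantive step is the construction of $f$: one must simultaneously guarantee $f > 0$ on all of $S$ and a uniform negative bound for $L_S f$ on $U$. The small-domain eigenvalue argument cleanly produces the required sign on a neighborhood of any compact piece of $U$, but it must be globalized without destroying positivity, and this is exactly where the hypothesis $\mathrm{int}(S\setminus U) \ne \emptyset$ is used — it provides the room to interpolate smoothly between the local eigenfunction (or local eigenfunctions) and a positive constant in the complement of $U$.
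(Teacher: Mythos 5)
Your overall strategy — expand $H_s$ to first order and reduce to finding a globally positive $f$ with $L_S f$ of a definite sign on $U$ — matches the paper's. But there are two genuine problems with your execution, and the second one is fatal.

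First, the sign. With the paper's convention (made explicit elsewhere in the text, e.g.\ ``the mean curvature of $S_{r_0}$ with respect to $-\partial_r$ is $(\tanh r_0 + \coth r_0)/2$'' and hypothesis H4), ``positive mean curvature with respect to $\nu$'' means the mean curvature vector points in the $\nu$ direction, i.e.\ $H=\langle\vec H,\nu\rangle$. A routine check from the second variation of area gives $\partial_s H_s\big|_{s=0}=L_S f=\Delta f+\big(|A|^2+\Ric(\nu,\nu)\big)f$ (test it on a totally geodesic $S^2\subset S^3$ with $f\equiv 1$: $L_S f=2>0$, and moving north from the equator indeed produces a sphere whose mean curvature vector points north). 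Your expansion $H_s=H_0-sL_Sf+O(s^2)$ has the wrong sign, so you are aiming for $L_Sf<0$ on $U$ when what is needed is $L_Sf>0$ on $U$.

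Second, even correcting that, your construction cannot deliver $L_Sf>0$. The first Dirichlet eigenvalue $\mu_1(K)$ of $-L_S$ on a small $K$ is \emph{positive}, so the eigenfunction $\phi$ satisfies $L_S\phi=-\mu_1\phi<0$ — exactly the wrong sign. To force $L_S\phi>0$ by this route you would need a domain on which the first Dirichlet eigenvalue is \emph{negative}, which requires $K$ to be large (it cannot be arranged for a small $K$, and whether it can be arranged at all depends on the index of $S$). Moreover, even for the sign you do get, the ``gluing along an exhaustion'' is not sound: a partition-of-unity combination $f=\sum\chi_j\phi_j$ produces the extra terms $2\nabla\chi_j\cdot\nabla\phi_j+(\Delta\chi_j)\phi_j$ whose sign is uncontrolled, and your proposal to confine the cutoff transitions to $\mathrm{int}(S\setminus U)$ would require that open set to \emph{separate} $U$ into small pieces, which is not guaranteed.

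The paper's argument sidesteps both problems with a single, cleaner idea: instead of localizing, one solves a \emph{global} eigenvalue problem for a \emph{modified} potential. Choose $q$ on all of $S$ with $q=\Ric(\nu,\nu)+\|A\|^2$ on $U$ and $q$ sufficiently large somewhere in the (nonempty) interior of $S\setminus U$; this forces $\lambda_1\big(-\Delta-q\big)<0$ on the closed surface $S$ (Rayleigh quotient with a test function concentrated where $q$ is large). Let $f>0$ be the first eigenfunction. Then $\Delta f+qf=-\lambda_1 f>0$ everywhere, and on $U$ this is precisely $L_S f>0$ because the modification of $q$ does not change its value there. This uses the hypothesis $\mathrm{int}(S\setminus U)\neq\emptyset$ exactly once — as the place to inflate $q$ — avoids any gluing, and gives a globally positive $f$ and a uniform positive lower bound for $L_S f$ on $U$ in one stroke.
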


\begin{proof}
Let $S$ and $U$ be as in the statement. Let $q$ be a function on $S$ such that
$q=Ric(\nu,\nu)+\|A\|^2$ on $U$ and $\boL=-\Delta-q$ has negative first eigenvalue on $S$.
It is enough to assume $q$ is large enough somewhere in $\Sigma\setminus U$ to ensure that
the first eigenvalue $\lambda_1$ is negative. Let $f>0$ be the first eigenfunction of $\boL$. Consider
$S_t=\exp_{S,tf}(S)$ and $H_t(p)$ be the mean curvature of $S_t$ at
$\exp_{S,tf}(p)$. It is known that $2\partial_t{H_t}_{|t=0}=\Delta
f+(Ric(\nu,\nu)+\|A\|^2)f=-\lambda_1 f+(Ric(\nu,\nu)+\|A\|^2-q)f>0$ on $U$. Thus, there is
$\tau>0$ such that $H_t(p)>0$ for any $t\in(0,\tau]$ and $p\in U$.
\end{proof}

Using the above lemma we can prove the following result.

\begin{prop}\label{prop:outside}
Let $A_0$, $\delta_0$ and $\s_0\le 1$ be positive. Then there is $\ell_0$ and $\barre R$ such the
following is true. Let $\ell\le \ell_0$ and $M$ be a cusp manifold such that
$\boA_1(M)\le A_0$ and $M$ contains a tubular end $N_{R_\ell}$ of a geodesic loop
of length $\ell$ and such that $S_{R_\ell}$ has diameter less than $\delta_0$ and systole
larger than $\s_0$. Let $\Sigma$ be an embedded minimal surface that realizes $\boA_1(M)$
then $\Sigma\cap N_{R_\ell-\barre R}$ is empty.
\end{prop}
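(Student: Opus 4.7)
The plan is to argue by contradiction, assuming $\Sigma\cap N_{R_\ell-\barre R}\neq\emptyset$ for an appropriately chosen $\barre R$, and deriving a contradiction with the area bound $a(\Sigma)\le A_0$. By Theorem~\ref{th:leastarea}, the surface $\Sigma$ is of one of three types: stable orientable, stable non-orientable, or an index-one min-max surface with $\boA_1(M)=W_M$. The argument naturally splits along this trichotomy.

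In the stable cases (types 2 and 3 of Theorem~\ref{th:leastarea}), Schoen's curvature estimate for stable minimal surfaces in a $3$-manifold of bounded sectional curvature provides a universal bound $|A_\Sigma|\le k_0$. I would then apply Corollary~\ref{cor:estimarea1} with parameters $\delta_0$ and $k_0$, producing constants $\ell_0'$ and $\barre R'$ such that, for $\ell\le \ell_0'$, if $\Sigma\cap N_{R_\ell-\barre R'}\neq\emptyset$ then $\Sigma\cap N_{R_\ell-\barre R'}$ is a finite union of disks with total area at least $2\pi(\cosh(R_\ell-\barre R')-1)$. Since $R_\ell\to\infty$ as $\ell\to 0$, it suffices to shrink $\ell_0$ further depending on $A_0$ so that this quantity exceeds $A_0$, contradicting $a(\Sigma)\le A_0$.

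For the index-one case, the bounded-curvature argument is unavailable. The first step is to adapt Proposition~\ref{prop:estimarea} to surfaces of index at most $1$ by running its proof with Proposition~\ref{prop:max3} applied with $i_0=2$ in place of $i_0=1$; this yields a constant $\barre R''$ and the dichotomy that if $\Sigma\cap N_{R_\ell-\barre R''}\neq\emptyset$ then $\Sigma\cap N_1\neq\emptyset$. However, the ensuing lower bound $\kappa\s_0 e^{R-R_\ell}$ is only of constant order at $R=R_\ell-\barre R''$, so it cannot by itself contradict $a(\Sigma)\le A_0$. Instead one must exploit the variational characterization $|\Sigma|=W_M$ together with the fact that $\Sigma$ is separating. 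Using Lemma~\ref{lem:increasing} with the positive first eigenfunction of the stability operator of $\Sigma$ yields deformations $\Sigma^\pm_s=\exp_{\Sigma,\pm sf}(\Sigma)$ with $|\Sigma^\pm_s|<|\Sigma|$ for small $s>0$. One then performs a surgery cutting off the deep component of $\Sigma\cap N_{r_0}$ at an intermediate radius $r_0$ and capping it with a disk on $S_{r_0}$, which has small area since $|S_{r_0}|$ is uniformly bounded in terms of $\delta_0$ as $\ell\to 0$. The pieces are assembled into a continuous sweep-out of $D(M)$ that passes through the surgered slice rather than through $\Sigma$ itself, with maximum area strictly below $W_{D(M)}$, contradicting the definition of the width together with Theorem~\ref{th:conttodis}.

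The main obstacle is the sweep-out construction in the index-one case. Two quantitative estimates must be combined: on the one hand, one needs a lower bound on the area of $\Sigma\cap N_{r_0}$ large enough to ensure that the capping surgery strictly decreases area, which is not provided by the weak estimate in the adapted Proposition~\ref{prop:estimarea} alone; on the other hand, one must verify that the deformations $\Sigma^\pm_s$ glue with the surgered slice into a genuinely continuous sweep-out in the sense of Section~\ref{sec:minmax2}, so that Theorem~\ref{th:conttodis} applies. Together these two steps constitute the technical core of Proposition~\ref{prop:outside}.
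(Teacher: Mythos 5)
Your overall strategy matches the paper's: split according to Theorem~\ref{th:leastarea}, handle the stable case via curvature estimates and Corollary~\ref{cor:estimarea1}, and handle the index-one separating case by a cut-and-cap surgery producing a sweep-out of $D(M)$ whose width lies strictly below $|\Sigma|$, using Lemma~\ref{lem:increasing} for the mean-convex pushes. That is the right skeleton, and the stable case as you wrote it is complete.

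However, you flag the key quantitative step in the index-one case as an unresolved obstacle, and this is a genuine gap, not a mere technicality to be deferred. You correctly observe that Proposition~\ref{prop:estimarea} only gives $|\Sigma\cap N_R|\ge\kappa\s_0\,e^{R-R_\ell}$, which is of constant order at $R=R_\ell-\barre R$ and therefore cannot by itself exceed $A_0$. But the paper does not try to exceed $A_0$ here; it compares $|\Sigma\cap N_R|$ to the area of the replaced cap, whose size is controlled by $|S_R|$. The crucial missing computation is that $|S_R|=\pi\ell\sinh(2R)\le\sqrt 3\,\tfrac{\sinh(2R)}{\sinh(2R_\ell)}\le \sqrt 3\,e^{2(R-R_\ell)}$, so $|S_R|$ decays in $R_\ell-R$ with exponent $2$, while the lower bound on $|\Sigma\cap N_R|$ decays only with exponent $1$. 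Choosing $R_*\ge\barre R$ so that $e^{-R_*}\le\kappa\s_0/(4\sqrt 3)$ forces $|\Sigma\cap N_R|\ge 4|S_R|$ for all $3\le R\le R_\ell-R_*$. This factor of $4$ is exactly what funds the surgery: the pieces $\Sigma_i$ replacing $\Sigma\cap N_R\cap\Omega_i$ are Plateau solutions with $|\Sigma_i|\le|S_R|$, so the sweep-out stays below $|\Sigma|-2|S_R|<|\Sigma|=W_{D(M)}$, the desired contradiction. Without the exponent-$2$ decay of $|S_R|$, the argument does not close, and this is precisely the point your proposal leaves open.

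Two smaller remarks. First, the paper caps not with a disk in $S_{r_0}$ but with least-area minimal surfaces $\Sigma_i\subset\Omega_i\cap N_R$ spanning $\Gamma=\Sigma\cap S_R$ and homologous to $S_R\cap\Omega_i$; this is what makes Lemma~\ref{lem:increasing} applicable (the $\Sigma_i$ are minimal, hence have vanishing mean curvature on $U=\Sigma_i$) and guarantees the strict first-variation inequality along the push. A disk inside the slice $S_{r_0}$ is not minimal, and pushing it mean-convexly would require a separate argument. Second, your concern about applying Proposition~\ref{prop:estimarea} to index-one surfaces simply reflects the paper's own notational use of ``index less than $1$'' where ``index at most $1$'' (i.e.\ $i_0=2$ in Proposition~\ref{prop:max3}) is meant; this is not a structural difficulty.
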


\begin{proof}
We first assume that $\Sigma$ is stable. This implies that there is $k_0$ such that
$\Sigma$ has curvature bounded by $k_0$. So by
Corollary~\ref{cor:estimarea1}, there is $\ell_0$ and $\barre R$ such that, if $\ell\le
\ell_0$, either $\Sigma\cap N_{R_\ell-\barre R}$ is empty or $\Sigma\cap N_{R_\ell-\barre R}$
has area at least $2\pi(\cosh(R_\ell-\barre R)-1)$. Actually, if $\ell_0$ is chosen such
that
$2\pi(\cosh(R_{\ell_0}-\barre R)-1)\ge A_0$, the second case can not occur and $\Sigma\cap
N_{R_\ell-\barre R}$ is empty.

So we can assume that $\Sigma$ is separating and has index $1$. By
Proposition~\ref{prop:estimarea}, there is $\barre R$, $\ell_0$ and $\kappa$ such that
$\Sigma\cap N_{R_\ell-\barre R}=\emptyset$ or $|\Sigma\cap N_R|\ge \kappa \s_0
e^{R-R_\ell}$ for any $R\in[3,R_\ell-\barre R]$. Moreover $\ell_0$ and $\barre R$ can be
chosen such that the preceding paragraph is still true.

Let us assume that $\Sigma\cap N_{R_\ell-\barre R}$ is not empty. Let us notice that $|S_R|=\pi
\ell\sinh(2R)\le \sqrt 3\frac{\sinh(2R)}{\sinh (2R_\ell)}\le \sqrt3 e^{2(R-R_\ell)}$. So choosing
$R_*\ge \barre R$ such that $e^{-R_*}\le \frac{\kappa\s_0}{4\sqrt 3}$ we obtain
$$
|\Sigma\cap N_R|\ge 4|S_R|.
$$
for any $3\le R\le  R_\ell-R_*$.

Let $\eps$ be small such that the $\eps$-thin part of $M$ contains only cusp ends. For $L$
large we consider $D_{\eps,L}(M)=D(M)$ such that $\boA_1(M)=\boA_1(D(M))$. So $\Sigma$ is
a separating index $1$ minimal surface in $D(M)$ that realizes $\boA_1(D(M))$. The idea is
now to construct a discrete sweep-out $S$ of $D(M)$ such that $\LL(S)<|\Sigma|$ which
contradicts $\Sigma$ realizes $\boA_1(M)$. We notice that $N_{R_\ell}$ is still
isometrically included in $D(M)$.

$\Sigma$ separates $M$ into two connected components $\Ome_1$ and $\Ome_2$. Let $R\in
[R_\ell-R_*-1,R_\ell-R_*]$ such that $S_R$ is transverse to $\Sigma$. We define
$\Gamma=\Sigma\cap S_R$. The subset $\Ome_i\cap N_R$ has mean convex boundary made of
pieces of $S_R$ and $\Sigma$. We can
find a least area minimal surface $\Sigma_i\subset \Ome_i\cap N_R$ with
$\partial\Sigma_i=\Gamma$ and homologous to $S_R\cap\Ome_i$. Since $S_R\cap\Ome_i$ is a
surface with boundary $\Gamma$, $|\Sigma_i|\le|S_R\cap\Ome_i|\le |S_R|$. Besides
$\Sigma_i\cup(S_R\cap\Ome_i)$ bounds a subset $D_i$ of $N_R\cap \Ome_i$. By boundary
regularity of solutions to the Plateau problem~\cite{HaSi}, $\Sigma_i$ is a smooth surface up to its
boundary $\Gamma$ and, by the maximum principle, $\Sigma_i$ is transverse to $\Sigma$
along $\Gamma$. We notice that since $\Sigma_i$ is smooth up to its boundary we can
slightly extend $\Sigma_i$ across $\Gamma$ to $\Sigma_i'$. $\Sigma_i'$ is not assumed to
be minimal outside of $\Sigma_i$ and $\partial\Sigma_i'$ is assumed to be outside
$\Ome_i$.

Let us fix $i\in\{1,2\}$ and consider $\nu$ the unit normal along $\Sigma$ pointing into
$\Ome_i$. Since $\Sigma$ has index $1$, there is $\tau>0$ and $f_i>0$ on $\Sigma$ such
that $\exp_{\Sigma,sf_i}(\Sigma)$ is an embedded surface with positive mean curvature for
any $s\in(0,\tau]$. Moreover, we assume
$f_i>1$. If $\nu_i$ denote the unit normal along $\Sigma_i'$ pointing into $D_i$ along $\Sigma_i$, by Lemma~\ref{lem:increasing}, there is $g_i>0$ on $\Sigma_i'$ such
that $\exp_{\Sigma_i',sg_i}(\Sigma_i')$ is an embedded surface with positive mean curvature on
$\exp_{\Sigma_i',sg_i}(\Sigma_i)$ for any $s\in(0,\tau]$. Moreover, we assume
$g_i< 1$ and $\exp_{\Sigma_i',sg_i}(\partial\Sigma_i')\not\subset \Ome_i$ for any
$s\in[0,\tau]$.

Let us define $U_{i,0}=D_i\cup (\Ome_i\setminus N_R)$. For $s\le \tau$ we define
\begin{gather*}
V_{i,s}= \Big(\bigcup_{0\le s'\le s} \exp_{\Sigma,s'f_i}(\Sigma))\cup \bigcup_{0\le s'\le s}
\exp_{\Sigma_i',s'g_i}(\Sigma_i')\Big) \cap U_{i,0},\\
U_{i,s}=U_{i,0}\setminus V_{i,s}
\end{gather*}

We postpone the precise description of $\partial U_{i,s}$ to the end of the proof.
Actually we are going to prove that there is a smaller $\tau$ such that, for $s\in[0,\tau]$, $\partial U_{i,s}$
is
$\exp_{\Sigma,sf_i}(A_{i,s})\cup \exp_{\Sigma_i',sg_i}(B_{i,s})$ where $A_{i,s}$ is a smooth
subdomain in $\Sigma$ and $B_{i,s}$ is a smooth subdomain of $\Sigma_i'$. Moreover both
components of $\partial U_{i,s}$ are transverse. We also have $B_{i,s}\subset \Sigma_i$
and $A_{i,s}\subset
\Sigma\setminus N_{R-1}$. $s\mapsto \partial U_{i,s}$ is then a continuous map with values
in $\boZ_2(M)$. Moreover $\MM(\partial U_{i,s})\le \MM(\partial U_{i,0})$. We then have
$$
\MM(\partial U_{i,s})\le \MM(\partial U_{i,0})=|\Sigma_i|+|\Sigma\setminus N_R|\le
|S_R|+|\Sigma|-|\Sigma\cap N_R|\le |\Sigma|-3|S_R|
$$
Besides we notice that, since $A_{i,s}\subset \Sigma\setminus N_{R-1}$ and $B_{i,s}\subset
\Sigma_i'$, $\partial U_{i,s}$ is piecewise smooth mean convex in the sense of Definition~10 in~\cite{Son}.

Using the work of Song in \cite{Son}, we can adapt the work of the authors in \cite{MaRo2}
to the case $\partial U_{i,\tau}$ is not smooth and prove the following statement. 
\begin{claim}
There is a continuous sweep-out $\{\partial U_{i,s}\}_{s\in[\tau ,1]}$ of $U_{i,\tau}$ such that 
$$
\LL(\{\partial U_{i,s}\}_{s\in[\tau ,1]})\le |\Sigma|-2|S_R|.
$$
\end{claim}

\begin{proof}
Because of the Appendix in~\cite{Son}, we know that there exists a homotopically closed
family $\Lambda$ of sweepouts in $U_{i,\tau}$. Let us assume that $W(U_{i,\tau},\partial
U_{i,\tau},\Lambda)\ge |\Sigma|-5/2|S_R|>|\partial U_{i,\tau}|$.

Thus by Theorem~12 in~\cite{Son}, there is a closed minimal surface $S$ in $U_{i,\tau}$.
As in the proof of Lemma~20 in~\cite{Son}, $N=U_{i,\tau}\setminus S$ is then a mean convex
subset such that $\partial U_{i,\tau}$ has non vanishing homology class in $N$. Thus we
can minimize the area in this homology class in order to get $S'$ a stable minimal surface
such that $|S'|\le |\partial U_{i,\tau}|$. But this implies $\boA_\boS(D(M))\le |\partial
U_{i,\tau}|\le |\Sigma|-3|T_R|<\boA_1(D(M))\le \boA_\boS(D(M))$ which is contradictory. So
$W(U_{i,\tau},\partial U_{i,\tau},\Lambda)\le |\Sigma|-5/2|S_R|$ and there is $\{\partial
U_{i,s}\}_{s\in[\tau ,1]}$ with 
$$\LL(\{\partial U_{i,s}\}_{s\in[\tau ,1]})\le |\Sigma|-2|S_R|.$$
\end{proof}


Using these two sweep-outs, we can define a family $G_s$ (see Figure~\ref{fig:fig3}) of open
subsets of $M$ by
$$
G_s=\begin{cases}
U_{1,1-s}&\text{if }0\le s\le 1\\
U_{1,0}\cup N_{s-1}&\text{if } 1\le s\le R+1\\
(\Ome_1\cup N_R)\setminus (\barre{D_2\setminus N_{2R+1-s}})&\text{if }R+1\le s \le 2R+1\\
M\setminus\barre{U_{2,s-2R-1}}&\text{if }2R+1\le s\le 2R+2
\end{cases}
$$

\begin{figure}[h]
\begin{center}
\resizebox{0.9\linewidth}{!}{\input{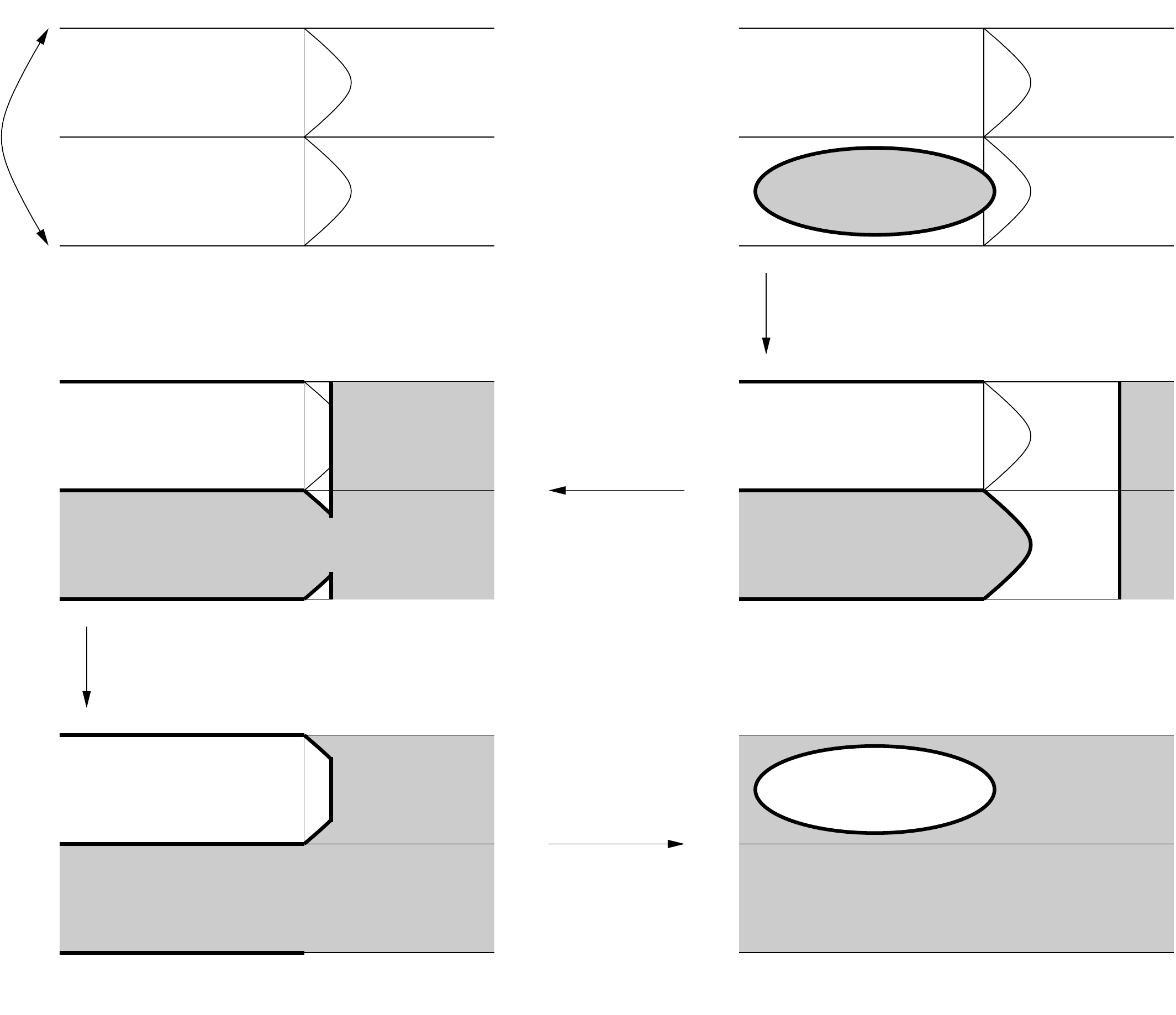_t}}
\caption{A schematic view of $G_s$ (the bottom and top line are identified)\label{fig:fig3}}
\end{center}
\end{figure}

The family $G_s$ satisfies $\boH^3(G_s\triangle G_t)\to 0$ as $s\to t$. Moreover, $\partial
G_s$ is rectifiable so $\Phi: s\mapsto \partial G_s$ is a continuous path in $\boZ_2(M)$
for the flat topology. Moreover $G_0=\emptyset$ and $G_{2R+2}=M$.
Let us now study $\MM(\Phi(s))$. For $s\in[0,1]$ we have $\MM(\Phi(s))=\MM(\partial
U_{1,1-s})\le |\Sigma|-2|S_R|$. For $s\in[1,R+1]$, $\partial G_s$ is contained in
$\partial U_{1,0}\cup S_{s-1}$ so $\MM(\Phi(s))\le \MM(\partial U_{1,0})+|S_{s-1}|\le
|\Sigma|-2|S_R|$. If $s\in[R+1,2R+1]$, $\partial U_s$ is contained in $\partial
U_{2,0}\cup S_{2R+1-s}$ so $\MM(\Phi(s))\le \MM(\partial U_{2,0})+|S_{2R+1-s}|\le
|\Sigma|-2|S_R|$. Finally for $s\in[2R+1,2R+2]$, $\MM(\Phi(s))=\MM(\partial
U_{2,s-2R-1})\le |\Sigma|-2|S_R|$.

After a reparametrization, we have then constructed a continuous map $\Phi:[0,1]\to
\boZ_2((D(M)),\boF)$ satisfying all the hypotheses of Theorem~\ref{th:conttodis} with
$\sup\MM(\Phi)\le |\Sigma|-2|T_R|<|\Sigma|$. So by Theorem~\ref{th:conttodis}, we have
$\boA_1(M)=\boA_1(D(M))\le W_{D(M)}\le\sup\MM(\Phi)\le
|\Sigma|-2|T_R|<|\Sigma|=\boA_1(M)$. This gives a contradiction with $\Sigma\cap
N_{R_L-\barre R}\neq \emptyset$ and finishes the proof.

Let us come back to the study of $\partial U_{i,s}$ for small $s$ and check the properties
we announced. Clearly this boundary
is contained in $\Sigma_{i,s}'=\exp_{\Sigma_i',sg_i}(\Sigma_i')$ and
$\Sigma_s=\exp_{\Sigma,sf_i}(\Sigma)$. We need to understand the intersection of these two
surfaces when $s$ is small. 

We define $F_i:(p,s)\in\Sigma\times(-\tau,\tau)\mapsto \exp_{\Sigma,sf_i}(p)\in M$ and
$G_i:(p,s)\in\Sigma_i'\times(-\tau,\tau)\mapsto \exp_{\Sigma_i',sg_i}(p)\in M$ for small $\tau$.
The map $F_i$ defines a smooth coordinate system in a neighborhood $N$ of $\Sigma$. Let us write
$F_i^{-1}=(P,T): N\to \Sigma\times  (-\tau,\tau)$. Let us remark that at a point $p\in
\Sigma$, ${DF_i^{-1}}_{|p}:X\in T_pM\mapsto (\pi_p(X),(X,\nu)/f_i(p))\in T_p\Sigma\times
\R$ where $\pi_p$ is the normal projection $T_pM\to T_p\Sigma$.

Let $V$ be a neighborhood of $\Gamma$ inside $\Sigma_i'$ contained in $N$. There is $\tau'$ such that
$G_i(V\times(-\tau',\tau'))\subset N$. Let $\eta_i$ be the conormal to
$\Gamma$ in $\Sigma_i'$ pointing to $\Sigma_i$. So neighboring points to $\Gamma$ in
$\Sigma_i'$ can be parametrized by
$(p,t)\in\Gamma\times(-\eps,\eps)\mapsto\exp_p^i(t\eta_i(p))$ where $\exp^i$ is the
exponential map in $\Sigma_i'$. Thus such a point
has image by $G_i(\cdot,s)$ in the intersection
$F_i(\Sigma,s)\cap G_i(\Sigma_i',s)$ ($s$ small) if
$$
L_i(p,s,t):=T(G_i(\exp_p^i(t\eta_i(p)),s))-s=0
$$
Solving $t$ as a function of $(p,s)\in \Gamma\times \R$ can be done near
$\Gamma\times\{0\}$ using the implicit function theorem since $L_i(p,0,0)=0$. Indeed we have
$\partial_t L_i(p,0,0)=(\nu(p),\eta_i(p))/f_i(p)>0$ since both $\nu$ and $\eta_i$ point to
$\Ome_i$. So $t_i(p,s)$ can be defined near $\Gamma\times\{0\}$. At $(p,0)$ we also have
$$
0=\partial_s(L(p,s,t_i(p,s))=\frac{(\nu,\eta_i)}{f_i}\partial_st_i+\frac{g_i(\nu,\nu_i)}{f_i}-1
$$
Thus $\partial_st_i=\frac{f_i}{(\nu,\eta_i)}(1-\frac{g_i(\nu,\nu_i)}{f_i})>0 $
since $g_i/f_i<1$. The curve $\gamma_{i,s}(p)=\exp_p^i(t_i(p,s)\eta(p))$ is sent by
$G_i(\cdot,s)$ on the intersection $F_i(\Sigma,s)\cap G_i(\Sigma_i',s)$:
$\beta_i(\cdot,s)=G_i(\gamma_{i,s}(\cdot),s)$ is a parametrization of the intersection.
Moreover $\gamma_{i,s}$
bounds a subdomain $B_{i,s}$ in $\Sigma_i'$ whose image by $G_i(\cdot,s)$ is the piece of
$\partial U_{i,s}$ contained in $G_i(\Sigma_i',s)$.
Since $\partial_st_i>0$, we have $B_{i,s}\subset \Sigma_i$. At $(p,0)$, we also have
$$
\partial_s\beta_i=g_i\nu_i+\frac{f_i}{(\nu,\eta)}(1-\frac{g_i(\nu,\nu_i)}{f_i})\eta_i
$$

The curve $\gamma_s(\cdot)=P(\beta_i(\cdot,s))$ on $\Sigma$ is such that
$F_i(\gamma_s(\cdot),s)$ is also a parametrization of the intersection. $\gamma_s$ bounds a
subdomain $A_{i,s}$ in $\Sigma$ such that $\partial U_{i,s}$ is
$F_i(A_{i,s},s)\cup G_i(B_{i,s},s)$. We notice that, at $s=0$, 
$$
\partial_s\gamma_s(p)=\pi_p(\partial_s\beta_i)=\pi_p(g_i\nu_i+\frac{f_i}{(\nu,\eta)}
(1-\frac{g_i(\nu,\nu_i)}{f_i})\eta_i)
$$
We notice that since $B_{i,s}\subset \Sigma_i$, the mean curvature of
$G_i(B_{i,s},s)$ is positive ($s>0$). The same is true for the mean curvature of
$F_i(A_{i,s},s)$. Moreover both surfaces intersect at an angle less than
$\pi$. Finally using the first variation formula and $\Sigma$ and $\Sigma_i$
are minimal, we have at $s=0$
\begin{align*}
\partial_s(|F_i(A_{i,s},s)|+|G_i(B_{i,s},s)|&=
-\int_{\Gamma}(\partial_s\gamma_s(p),\eta)+(\partial_s\gamma_{i,s}(p),\eta_i)\\
&=-\int_\Gamma(g_i\nu_i+\frac{f_i}{(\nu,\eta_i)}
(1-\frac{g_i(\nu,\nu_i)}{f_i})\eta_i,\eta)\\
&\qquad\qquad+\frac{f_i}{(\nu,\eta_i)}(1-\frac{g_i(\nu,\nu_i)}{f_i})\\
&=-\int_\Gamma (g_i\nu_i+\frac{f_i}{(\nu,\eta_i)}
(1-\frac{g_i(\nu,\nu_i)}{f_i})\eta_i,\eta+\eta_i)\\
&=-\int_\Gamma (\partial_s\beta_i,\eta+\eta_i)
\end{align*}
where $\eta$ is the unit conormal to $\Gamma$ in $\Sigma$ that points outside $N(R)$. We
notice that for $s>0$, $\beta_s$ is inside $U_{i,0}$ so $\partial_s\beta_s$ points to
$U_{i,0}$ and is orthogonal to the tangent space to $\Gamma$. $\eta+\eta_i$ is a vector
that bisects the wedge corresponding to $U_{i,0}$ and contained in the orthogonal to the
tangent space to $\Gamma$. So $(\partial_s\beta_i,\eta+\eta_i)>0$ along $\Gamma$ and
$\partial_s(|F_i(A_{i,s},s)|+|G_i(B_{i,s},s)|<0$. This implies that $|\partial
U_{i,s}|<|\partial U_{i,0}|$ for $s>0$ small. So all the stated properties are satisfied. 
\end{proof}

\subsection{The lower semi-continuity} 
We have the following result.

\begin{prop}\label{prop:losemi}
Let $M_i\to \barre M$ be a converging sequence of cusp manifolds. Then $\liminf
\boA_1(M_i)\ge \boA_1(\barre M)$.
\end{prop}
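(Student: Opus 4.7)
The plan is to pass to a subsequence with $\lim_i \boA_1(M_i)=\liminf \boA_1(M_i)=:A$, which is finite by Proposition~\ref{prop:upperbound} (so $A \le W_{D(\barre M)}$). For each $i$, apply the construction of Section~\ref{sec:boa1cusp} to get a minimal surface $\Sigma_i \subset M_i$ realizing $\boA_1(M_i)$, which by Theorem~\ref{th:leastarea} has index at most $1$. In particular $a(\Sigma_i) \le A+1$ for $i$ large. The goal is to transport the $\Sigma_i$ back to $\barre M$ and take a limit; this will give a closed embedded minimal surface $\Sigma_\infty \subset \barre M$ with $a(\Sigma_\infty)\le A$, which forces $\boA_1(\barre M)\le A$.

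The main step, and also the main obstacle, is to confine all the $\Sigma_i$ inside a common thick part that is quasi-isometric to a fixed piece of $\barre M$. Fix $\eps_0>0$ smaller than the Margulis constant, small enough that the $\eps_0$-thin part of $\barre M$ consists only of cusp ends with very small boundary tori, and small enough that the maximum principle Proposition~\ref{prop:max3} applies (with index bound $i_0=1$) to each such cusp. For $i$ large the components of the $\eps_0$-thin part of $M_i$ are of two types. First, those corresponding to cusps of $\barre M$: by Lemma~\ref{lem:conv2} the boundary tori in $M_i$ are $\kappa_i$-quasi-isometric to those in $\barre M$, hence still have small diameter, and Proposition~\ref{prop:max3} prevents $\Sigma_i$ from penetrating past a uniform depth $1/2$. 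Second, tubular ends $N_{R_{\ell_i}}$ around short geodesics $\gamma_i$ in $M_i$ with $\ell_i\to 0$: by Corollary~\ref{cor:diamsys} the diameters and systoles of $S_{R_{\ell_i}}$ satisfy uniform bounds, and $\boA_1(M_i)$ is uniformly bounded, so Proposition~\ref{prop:outside} yields a universal $\barre R$ with $\Sigma_i\cap N_{R_{\ell_i}-\barre R}=\emptyset$. Combining both cases, there exists a compact region $\barre K\subset \barre M$ (a slightly thickened $\eps_0$-thick part, minus the tubular cores) such that for all large $i$, $\Sigma_i\subset \phi_i(\barre K)$, where $\phi_i:\barre M_{[\eps_0,\infty)}\to M_i$ is the $\kappa_i$-quasi-isometry of Subsection~\ref{sec:conv}.

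Pull back: let $\widetilde\Sigma_i=\phi_i^{-1}(\Sigma_i)\subset\barre K$ and $h_i=\phi_i^* g_i$. By the remark following Lemma~\ref{lem:conv2}, $h_i\to\bar g$ smoothly on $\barre K$, and $\widetilde\Sigma_i$ is a closed embedded $h_i$-minimal surface of index at most $1$ with $|\widetilde\Sigma_i|_{h_i}$ uniformly bounded. Sharp's compactness theorem applies (the metric is varying but converging smoothly on a fixed domain): after passing to a subsequence, $\widetilde\Sigma_i$ converges, in the varifold sense and smoothly away from finitely many points, to $m\cdot\Sigma_\infty$ where $m\ge 1$ is an integer and $\Sigma_\infty$ is a smooth closed embedded minimal surface in $(\barre K,\bar g)\subset\barre M$ of index at most $1$. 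The orientability class is preserved in the limit away from possible sheet-doubling, so in all cases
\[
A=\lim_i a(\Sigma_i)=\lim_i a(\widetilde\Sigma_i)\ge a(\Sigma_\infty),
\]
where $a(\cdot)$ denotes $|\cdot|$ on orientable surfaces and $2|\cdot|$ otherwise, together with the multiplicity factor. Since $\Sigma_\infty$ is a closed embedded minimal surface of $\barre M$, $\boA_1(\barre M)\le a(\Sigma_\infty)\le A=\liminf \boA_1(M_i)$, as required.
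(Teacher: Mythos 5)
Your proposal is correct and follows essentially the same route as the paper's proof: transport the surfaces $\Sigma_i$ realizing $\boA_1(M_i)$ back to $\barre M$ via the quasi-isometries of geometric convergence, using Proposition~\ref{prop:upperbound} for the uniform area bound, Corollary~\ref{cor:diamsys} and Proposition~\ref{prop:outside} to exclude the tubular ends, and Sharp's compactness to produce the limit surface. The only (welcome) difference is that you make explicit the use of Proposition~\ref{prop:max3} to confine $\Sigma_i$ away from the cusp ends of $M_i$, a step the paper leaves implicit in its assertion that $\Sigma_i\subset{M_i}_{[\eps,\infty)}$ for a uniform $\eps$.
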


\begin{proof}
Let us consider a minimal surface $\Sigma_i$ in $M_i$ such that $a(\Sigma_i)=\boA(M_i)$.
By Proposition~\ref{prop:upperbound}, we know that there is $A_0$ such that
$|\Sigma_i|<A_0$. Moreover, by Corollary~\ref{cor:diamsys}, there is $\ell_0$, $\delta_0$ and $\s_0$ such that if
$M_i$ contains a geodesic loop of length $\ell\le\ell_0$ then its tubular neighborhood
$N_{R_\ell}$ satisfies $S_{R_\ell}=\partial N_{R_\ell}$ has diameter less than $\delta_0$ and systole larger
than $\s_0$. Thus there is $\barre R$ such that $\Sigma_i\cap N_{R_\ell-\barre R}$ is
empty by Proposition~\ref{prop:outside}.

This implies that there is $\eps>0$ such that $\Sigma_i\subset {M_i}_{[\eps,\infty)}$ for
any $i$. Since $M_i\to \barre M$ there is $\phi_i:\barre M_{[\eps/2,\infty)}\to M_i$ which
is a $\kappa_i$ quasi-isometry where $\kappa_i\to 1$. Moreover we have
${M_i}_{[\eps,\infty)}\subset\phi_i(\barre M_{[\eps/2,\infty)})$. Let $\tilde g_i=\phi_i^*
g_i$ and $\widetilde \Sigma_i=\phi_i^{-1}(\Sigma_i)\subset \barre M_{[\eps/2,\infty)}$
which is a minimal surface for the metric $\tilde g_i$. Since $M_i\to \barre M$ we have
$\tilde g_i\to \bar g$ in the $C^\infty$ topology. Moreover $a_{\tilde g_i}(\widetilde
\Sigma_i)\le A_0$ and the index of $\Sigma_i$ is $0$ or $1$.

Thus we can apply the compactness result of Sharp (Theorem~A.6 in~\cite{Shar}). It implies
that there is a closed connected embedded minimal surface $\barre\Sigma$ in $(\barre
M,\bar g)$ such that $(\widetilde \Sigma_i)$ converges in the varifold sense to $\barre
\Sigma$ with some multiplicity. Moreover, the convergence is smooth outside a finite
number of points. If $\barre \Sigma$ is orientable, then 
$$
\boA_1(\barre M)\le a_{\bar g}(\barre \Sigma)=|\barre \Sigma|_{\bar g}\le\lim
|\widetilde\Sigma_i|_{\tilde g_i}\le\liminf \boA_1(M_i)
$$

If $\barre \Sigma$ is non-orientable, then either $\widetilde\Sigma_i$ is non orientable
or $\widetilde\Sigma_i$ is orientable and the convergence must be with multiplicity at
least $2$. In both cases, we have
$$
\boA_1(\barre M)\le a_{\bar g}(\barre \Sigma)=2|\barre \Sigma|_{\bar g}\le\lim
a_{\tilde g_i}(\widetilde\Sigma_i)=\liminf \boA_1(M_i)
$$
So the proposition is proved.
\end{proof}

 
\appendix


\section{}\label{sec:appen}


\subsection{A uniform graph lemma}\label{sec:appendix}

Let us consider $\R^3$ endowed with the metric $\bar g=h^2(x_3)(dx_1^2+dx_2^2)+dx_3^2$.
For $k_1,k_2,k_3,k_4\in \{1,2,3\}$ and $p\le 4$, we recall that
$$
n_p(k_1,\dots,k_p)=\#\{i\in\{1,\dots,p\}|k_i\in\{1,2\}\}.
$$ 
We consider a second metric $g=a_{kl}(x_1,x_2,x_3)dx_kdx_l$. We assume that there is
some $A$ such that the following hypotheses occurs
\begin{itemize}
\item[H1] $\frac1{A^2}\bar g\le g\le A^2\bar g$
\item[H2] $\frac{|h'|}h\le A$ and $\frac{|h''|}h\le A$.
\item[H3] $|a_{kl}|\le A h^{n_2(k,l)}(x_3)$, $|\partial_ia_{kl}|\le A h^{n_3(k,l,i)}(x_3)$ and
$|\partial_i\partial_j a_{kl}|\le A h^{n_4(k,l,i,j)}(x_3)$.
\end{itemize}
We notice that the metric $\bar g$ satisfies also the hypotheses of the last item.

\begin{lem}\label{lem:estimgraph}
Let $\bar g$ and $g$ as above and consider $\eps_0$, $k_0$, then there is $C>0$ such that
the following is true.
Let $\Sigma$ be a surface in $\R^2\times [a,b]$ endowed with the metric $g$ which is
tangent to $\R^2\times\{\bar t\}$ at $\bar p=(0,0,\bar t)$ such that $d_\Sigma(\bar p,
\partial\Sigma)\ge \eps_0$ and $|A_\Sigma|\le k_0$.

Then there is a function $u$ defined on the disk $\{(x_1,x_2)\in \R^2|x_1^2+x_2^2\le
2C^2/h^2(\bar t)\}$ such that $(x_1,x_2)\mapsto(x_1,x_2,\bar t+u(x_1,x_2))$ is a
parametrization of a neighborhood of $\bar p$ in $\Sigma$. Moreover $u$ satisfies
$$
|u|\le A\eps_0,\quad \|\nabla u\|\le h(\bar t)\quad\textrm{and}\quad\|\Hess u\|\le \frac1C h^2(\bar t)
$$
\end{lem}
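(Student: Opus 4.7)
The natural horizontal length scale at height $\bar t$ is $1/h(\bar t)$, so the plan is to anisotropically rescale to normalise this scale, apply a standard bounded-curvature graph lemma to the rescaled surface, and then unscale. Set $y_1 = h(\bar t) x_1$, $y_2 = h(\bar t) x_2$, $y_3 = x_3 - \bar t$. This diffeomorphism pulls $g$ back to a metric $\tilde g = \tilde a_{kl}(y)\, dy_k dy_l$ with
\[
\tilde a_{kl}(y) = h(\bar t)^{-n_2(k,l)}\, a_{kl}\bigl(y_1/h(\bar t),\, y_2/h(\bar t),\, y_3 + \bar t\bigr),
\]
and turns $\bar g$ into $(h(y_3 + \bar t)/h(\bar t))^2(dy_1^2 + dy_2^2) + dy_3^2$. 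Hypothesis H2 forces $\log h$ to be $A$-Lipschitz, so $h(y_3+\bar t)/h(\bar t)$ is pinched between $e^{-A|y_3|}$ and $e^{A|y_3|}$ on any bounded $y$-ball. Each horizontal derivative $\partial_{y_i}$ introduces an extra factor $1/h(\bar t)$; since H3 allows $\partial_i a_{kl}$ to grow like $h^{n_3(k,l,i)} = h^{n_2(k,l)+1}$ when $i\in\{1,2\}$ and $h^{n_2(k,l)}$ when $i=3$, these powers cancel perfectly, and the same counting works at second order. Thus $\tilde g$ satisfies uniform $C^2$ bounds on any Euclidean ball of fixed radius around the origin, with constants depending only on $A$.

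Let $\tilde\Sigma$ be the pre-image of $\Sigma$ under this change of variables. Since the map is a global isometry between $(\R^3,\tilde g)$ and $(\R^3,g)$, the surface $\tilde\Sigma$ is tangent to $\{y_3 = 0\}$ at the origin, has second fundamental form of norm at most $k_0$ with respect to $\tilde g$, and satisfies $d_{\tilde\Sigma}(0,\partial\tilde\Sigma) \ge \eps_0$. A standard graph lemma for surfaces with bounded second fundamental form in a Riemannian manifold with $C^2$-controlled metric (an easy adaptation of Proposition~2.3 in~\cite{RoSoTo}) then yields a constant $C>0$ depending only on $A$, $k_0$ and $\eps_0$, and a function $\tilde u$ defined on the Euclidean disk $\{y_1^2 + y_2^2 \le 2C^2\}$, with $\tilde u(0) = 0$, $\nabla\tilde u(0) = 0$, $|\nabla\tilde u| \le 1$ and $\|\Hess\tilde u\| \le 1/C$, whose graph over $\{y_3 = 0\}$ parametrises a neighbourhood of $0$ in $\tilde\Sigma$.

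Unscaling via $u(x_1,x_2) := \tilde u(h(\bar t) x_1, h(\bar t) x_2)$ gives a function defined on $\{x_1^2 + x_2^2 \le 2C^2/h(\bar t)^2\}$ parametrising a neighbourhood of $\bar p$ in $\Sigma$; the chain rule immediately yields $\|\nabla u\| \le h(\bar t)$ and $\|\Hess u\| \le h(\bar t)^2/C$. For the $C^0$ bound, any point $(x_1,x_2,\bar t + u(x_1,x_2))$ in $\Sigma$ is joined to $\bar p$ by a path inside $\Sigma$ of length at most $\eps_0$, so its $g$-distance to $\bar p$ is at most $\eps_0$; H1 gives $d_{\bar g}(\cdot,\bar p) \le A\,d_g(\cdot,\bar p) \le A\eps_0$; and since $dx_3^2$ is part of $\bar g$, the projection onto the $x_3$-axis is distance-decreasing, whence $|u| \le A\eps_0$.

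The main obstacle is purely bookkeeping in the first step: verifying that the powers of $h(\bar t)$ introduced by each horizontal differentiation in $y$ cancel exactly with the $h^{n_p}$ growth permitted by H3, uniformly on $y$-balls of fixed radius. Once this cancellation is verified, the rest is the classical bounded-curvature graph lemma applied in the rescaled picture, followed by the routine unscaling above.
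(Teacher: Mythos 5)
Your proof is correct and follows essentially the same route as the paper's: the anisotropic rescaling $y_i = h(\bar t)x_i$ ($i=1,2$), the verification via H2–H3 that the rescaled metric is uniformly $C^2$-controlled on a fixed $y$-band, application of the bounded-curvature graph lemma of \cite{RoSoTo}, and unscaling. The only cosmetic differences are that you shift the vertical coordinate by $\bar t$ (the paper does not) and you invoke a Riemannian form of the graph lemma directly, whereas the paper first compares the rescaled metric to the Euclidean one using Propositions~4.1 and 4.3 of \cite{RoSoTo} and then applies the Euclidean Proposition~2.3; for the $|u|\le A\eps_0$ bound you should, as the paper does, first replace $\Sigma$ by the geodesic $\eps_0$-disk about $\bar p$ so that every point considered is indeed within intrinsic distance $\eps_0$ of $\bar p$.
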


\begin{proof}
First we replace $\Sigma$ by the geodesic disk of center $\bar p$ and $\eps_0$. Since
$a_{33}\ge \frac1{A^2}$, the distance between $\{x_3=\bar t\}$ and $\{x_3=\bar t\pm t\}$ is at
least $t/A$. So $\Sigma$ is
contained in $\R^2\times[\bar t-A\eps_0,\bar t+A\eps_0]$. Let us also remark that since
$\frac{|h'|}h\le A$ we have $e^{-A|x_3-\bar t|}h(\bar t)\le h(x_3)\le e^{A|x_3-\bar
t|}h(\bar t)$. Then $e^{-A^2\eps_0} h(\bar t)\le h(x_3)\le e^{A^2\eps_0}h(\bar t)$ on
$[\bar t-A\eps_0,\bar
t+A\eps_0]$.

Let us consider $\Psi : \R^2\times [\bar t-A\eps_0,\bar t+A\eps_0]\to \R^2\times[\bar t-A\eps_0,\bar
t+A\eps_0]: (y_1,y_2,y_3)\mapsto (\frac1{h(\bar t)}y_1,\frac1{h(\bar t)}y_2,y_3)$. Then
the metric $g^*=\Psi^*g$ can be written $b_{kl}(y_1,y_2,y_3)dy_kdy_l$ where $b_{kl}=h(\bar
t)^{-n_2(k,l)}a_{kl}\circ \Psi$. Thus 
$$
|b_{kl}|= h(\bar
t)^{-n_2(k,l)}|a_{kl}|\circ \Psi \le A\frac{h^{n_2(k,l)}(y_3)}{h^{n_2(k,l)}(\bar t)}\le
Ae^{n_2(k,l)A^2\eps_0}\le Ae^{2A^2\eps_0}
$$
So there is a constant $B$ such that $|b_{kl}|\le B$. A similar computation proves that
$|\nabla b_{kl}|\le B$ and $|\Hess b_{kl}|\le B$. Using Hypothesis H1, we also have
$\frac1{A^2}\Psi^*\bar g\le g^*\le A^2\Psi^*\bar g$ where
$\Psi^*\bar g=\frac{h^2(y_3)}{h^2(\bar t)}(dy_1^2+dy_2^2)+dy_3^2$. This implies that $\det
g^*$ is far from $0$ and $\infty$. So the coefficients $b^{kl}$ of the inverse of $g^*$ satisfy
$|b^{kl}|\le B$ and, for any $k\in\{1,2,3\}$, $\frac1B\le b_{kk}\le B$ and $\frac1B\le b^{kk}\le B$.

Let us define $\Sigma^*=\Psi^{-1}(\Sigma)$, $\Sigma^*\subset (\R^3,g^*)$ is a geodesic
disk of radius $\eps_0$ and curvature bounded by $k_0$. Let us consider
$g_e=dy_1^2+dy_2^2+dy_3^2$ the Euclidean metric. Because of the the control we have
on $g^*$, there is $\eps_1$ that depends only on $\eps_0$, $A$ and $B$ and $k_1$ that
depends only on $k_0$, $A$ and $B$ such that $(\Sigma^*, g_e)$ has curvature bounded by
$k_1$ and $d_{\Sigma^*,g_e}(\bar p, \partial\Sigma^*)\ge \eps_1$ (the proof of this
result can be found in the Appendix of \cite{RoSoTo} more precisely see the proof of
Propositions~4.1 and 4.3).

So we have a surface in the Euclidean space $\R^3$ with curvature bounded by $k_1$,
$d_{\Sigma^*,g_e}(\bar p, \partial\Sigma^*)\ge \eps_1$ and that is tangent to
$\R^2\times\{\bar t\}$ at $\bar p$. Then a classical uniform graph lemma (see
Proposition~2.3 in \cite{RoSoTo}) implies that there is $C$ that depends only on
$k_1$ and $\eps_1$ such the following is true. There is a function $u$ defined on the Euclidean disk of
radius $\sqrt2C$ centered at the origin such that $(y_1,y_2)\mapsto (y_1,y_2,\bar
t+u(y_1,y_2))$ is a parametrization of a neighborhood of $\bar p$ in $\Sigma^*$. Moreover
$$
|u|\le 2 C,\quad|\nabla u|\le 1\quad\text{and}\quad \|\Hess u\|\le \frac1C
$$

In order to come back to the original coordinate system we define the function
$v(x_1,x_2)=u(h(\bar t)x_1,h(\bar t)x_2)$ which is defined on $\{(x_1,x_2)\in
\R^2|x_1^2+x_2^2\le 2\delta^2/h^2(\bar t)\}$ and satisfies
$$
|v|\le 2C,\quad|\nabla v|\le h(\bar t)\quad\text{and}\quad \|\Hess v\|\le
\frac1C h^2(\bar t)
$$
We notice that, since $\Sigma\subset \R^2\times [\bar t-A\eps_0,\bar t+A\eps_0]$, we have
$|v|\le A\eps_0$.
\end{proof}

\subsection{The minimal surface equation}

Several times we consider graphs that are minimal surfaces; let us write the equation
solved by these graphs.

On $\R^3$ we consider the metric $g=a_1^2(x_3)dx_1^2+a_2^2(x_3)dx_2^2+dx_3^2$ which is a
model for the metric in cusp or tubular ends. Let $u$ be a function in a domain of $\R^2$
and consider the graph parametrized by $X(x_1,x_2)=(x_1,x_2,u(x_1,x_2))$. The induced
metric is 
$$
(a_1^2(u)+u_{x_1}^2)dx_1^2+2u_{x_1}u_{x_2}dx_1dx_2+(a_2^2(u)+u_{x_2}^2)dx_2^2
$$
So the area element is $Wdx_1dx_2=(a_1^2(u)a_2^2(u)+a_2^2(u)u_{x_1}^2+a_1^2(u)u_{x_2}^2)dx_1dx_2$.

So if $v$ is an other function with zero boundary values and $A(t)$ is the area of the
graph of $u+tv$, the derivative of $A$ at $t=0$ is 
\begin{align*}
A'(0)&=\int\frac1W\Big(a_1(u)a_1'(u)a_2^2(u)v+a_1^2(u)a_2(u)a_2'(u)v+a_2(u)a_2'(u)u_{x_1}^2v\\
&\qquad\qquad+a_2^2(u)u_{x_1}v_{x_1}+a_1(u)a_1'(u)u_{x_2}^2v+a_1^2(u)u_{x_2}v_{x_2}\Big)\\
&=\int\frac{a_1(u)a_2(u)}W\big(a_1'(u)a_2(u)+a_1(u)a_2'(u)+\frac{a_2'(u)}{a_1(u)}u_{x_1}^2
+\frac{a_1'(u)}{a_2(u)}u_{x_2}^2\big)v\\
&\qquad\qquad-v\Div\frac{(a_2^2(u)u_{x_1},a_1^2(u) u_{x_2})}W
\end{align*}

Thus the graph is minimal if $u$ satisfies
\begin{multline}\label{eq:mse}
0=\Div\frac{(a_2^2(u)u_{x_1},a_1^2(u)
u_{x_2})}W\\-\frac{a_1(u)a_2(u)}W\big(a_1'(u)a_2(u)+a_1(u)a_2'(u)+\frac{a_2'(u)}{a_1(u)}u_{x_1}^2
+\frac{a_1'(u)}{a_2(u)}u_{x_2}^2\big)
\end{multline}

\bibliographystyle{plain}
\bibliography{../reference.bib}

\begin{thebibliography}{10}

\bibitem{Alm}
Frederick~Justin Almgren, Jr.
\newblock The homotopy groups of the integral cycle groups.
\newblock {\em Topology}, 1:257--299, 1962.

\bibitem{BePe}
Riccardo Benedetti and Carlo Petronio.
\newblock {\em Lectures on hyperbolic geometry}.
\newblock Universitext. Springer-Verlag, Berlin, 1992.

\bibitem{CoDeL}
Tobias~H. Colding and Camillo De~Lellis.
\newblock The min-max construction of minimal surfaces.
\newblock In {\em Surveys in differential geometry, {V}ol.\ {VIII} ({B}oston,
  {MA}, 2002)}, Surv. Differ. Geom., VIII, pages 75--107. Int. Press,
  Somerville, MA, 2003.

\bibitem{CoHaMaRo2}
Pascal Collin, Laurent Hauswirth, Laurent Mazet, and Harold Rosenberg.
\newblock Erratum to "minimal surfaces in finite volume non compact hyperbolic
  {$3$}-manifolds".
\newblock preprint, arXiv:1810.07562.

\bibitem{CoHaMaRo}
Pascal Collin, Laurent Hauswirth, Laurent Mazet, and Harold Rosenberg.
\newblock Minimal surfaces in finite volume non compact hyperbolic
  {$3$}-manifolds.
\newblock {\em Trans. Amer. Math. Soc.}, 369:4293--4309, 2017.

\bibitem{HaSi}
Robert Hardt and Leon Simon.
\newblock Boundary regularity and embedded solutions for the oriented {P}lateau
  problem.
\newblock {\em Ann. of Math. (2)}, 110:439--486, 1979.

\bibitem{Has2}
Joel Hass.
\newblock Minimal fibrations of hyperbolic $3$-manifold.
\newblock preprint, arXiv:1512.04145.

\bibitem{HuWa}
Zheng Huang and Biao Wang.
\newblock Complex length of short curves and minimal fibration in hyperbolic
  {$3$}-manifolds fibering over the circle.
\newblock preprint, arXiv:1512.03858.

\bibitem{KeMaNe}
Daniel Ketover, Fernando Marques, and Neves André{\'e}.
\newblock The catenoid estimate and its geometric applications.
\newblock preprint, arXiv:1601.04514.

\bibitem{MaRo2}
Laurent Mazet and Harold Rosenberg.
\newblock Minimal hypersurfaces of least area.
\newblock {\em J. Differential Geom.}, 106:283--316, 2017.

\bibitem{Mey}
Robert Meyerhoff.
\newblock A lower bound for the volume of hyperbolic {$3$}-manifolds.
\newblock {\em Canad. J. Math.}, 39:1038--1056, 1987.

\bibitem{Pit}
Jon~T. Pitts.
\newblock Existence and regularity of minimal surfaces on {R}iemannian
  manifolds.
\newblock {\em Bull. Amer. Math. Soc.}, 82:503--504, 1976.

\bibitem{RoSoTo}
Harold Rosenberg, Rabah Souam, and Eric Toubiana.
\newblock General curvature estimates for stable {$H$}-surfaces in
  {$3$}-manifolds and applications.
\newblock {\em J. Differential Geom.}, 84:623--648, 2010.

\bibitem{Shar}
Ben Sharp.
\newblock Compactness of minimal hypersurfaces with bounded index.
\newblock {\em J. Differential Geom.}, 106:317--339, 2017.

\bibitem{Son}
Antoine Song.
\newblock Embeddedness of least area minimal hypersurfaces.
\newblock preprint: arXiv:1511.02844.

\bibitem{Zho}
Xin Zhou.
\newblock Min-max minimal surface in $({M}^{n+1},g)$ with ${R}ic_g>0$ and $2\le
  n\le 6$.
\newblock {\em J. Differential Geom.}, 100:129--160, 2015.

\bibitem{Zho2}
Xin Zhou.
\newblock Min-max minimal hypersurface in manifold with positive {R}icci
  curvature.
\newblock {\em J. Differential Geom.}, 105:291--343, 2017.

\end{thebibliography}

\end{document}